\theoremstyle{plain}
\newtheorem{thm}{Theorem}[section]
\newtheorem{prop}[thm]{Proposition}
\newtheorem{lem}[thm]{Lemma}
\newtheorem{cor}[thm]{Corollary}
\theoremstyle{definition}
\newtheorem{dfn}[thm]{Definition}
\newtheorem{rmk}[thm]{Remark}
\newcommand{\rank}{\mathrm{rank}}
\newcommand{\Hom}{\mathrm{Hom}}
\newcommand{\Ker}{\mathrm{Ker}}
\newcommand{\prjt}{\mathrm{pr}}
\newcommand{\Spa}{\mathrm{Spa}}
\newcommand{\Spv}{\mathrm{Sp}}
\newcommand{\Spec}{\mathrm{Spec}}
\newcommand{\Gal}{\mathrm{Gal}}
\newcommand{\ad}{\mathrm{ad}}
\newcommand{\Frac}{\mathrm{Frac}}
\newcommand{\id}{\mathrm{id}}
\newcommand{\ch}{\mathrm{char}}
\newcommand{\Map}{\mathrm{Map}}
\newcommand{\geom}{\mathrm{geom}}
\newcommand{\sep}{\mathrm{sep}}
\newcommand{\ra}{\rangle}
\newcommand{\la}{\langle}
\newcommand{\Kbar}{\bar{K}}
\newcommand{\kbar}{\bar{k}}
\newcommand{\okbar}{\mathcal{O}_{\bar{K}}}
\newcommand{\fR}{\mathbb{C}^\flat}
\newcommand{\oR}{\mathcal{O}_{\mathbb{C}^{\flat}}}
\newcommand{\oee}{\mathcal{O}_E}
\newcommand{\okey}{\mathcal{O}_K}
\newcommand{\oc}{\mathcal{O}_{\mathbb{C}}}
\newcommand{\oel}{\mathcal{O}_L}
\newcommand{\oem}{\mathcal{O}_M}
\newcommand{\cF}{\mathcal{F}}
\newcommand{\cO}{\mathcal{O}}
\newcommand{\Ksep}{K^{\mathrm{sep}}}
\newcommand{\upi}{\underline{\pi}}
\newcommand{\ux}{\underline{x}}
\newcommand{\uy}{\underline{y}}
\newcommand{\oef}{\mathcal{O}_F}
\newcommand{\bA}{\mathbb{A}}
\newcommand{\bC}{\mathbb{C}}
\newcommand{\bZ}{\mathbb{Z}}
\newcommand{\bQ}{\mathbb{Q}}
\newcommand{\bF}{\mathbb{F}}
\newcommand{\sB}{\mathscr{B}}
\newcommand{\frp}{\mathfrak{p}}
\newcommand{\frq}{\mathfrak{q}}
\renewcommand{\p@enumii}{}
\begin{document}

\title[Ramification theory and perfectoid spaces]{Ramification theory and perfectoid spaces}
\author{Shin Hattori}
\date{\today}
\email{shin-h@math.kyushu-u.ac.jp}
\address{Faculty of Mathematics, Kyushu University}
%\classification{11S15}
%\keywords{ramification, truncated discrete valuation rings, perfectoid spaces}
%\thanks{Supported by Grant-in-Aid for Young Scientists B-23740025.}

\begin{abstract}
Let $K_1$ and $K_2$ be complete discrete valuation fields of residue characteristic $p>0$. Let $\pi_{K_1}$ and $\pi_{K_2}$ be their uniformizers. Let $L_1/K_1$ and $L_2/K_2$ be finite extensions with compatible isomorphisms of rings $\cO_{K_1}/(\pi_{K_1}^m)\simeq \cO_{K_2}/(\pi_{K_2}^m)$ and $\cO_{L_1}/(\pi_{K_1}^m)\simeq \cO_{L_2}/(\pi_{K_2}^m)$ for some positive integer $m$ which is no more than the absolute ramification indices of $K_1$ and $K_2$. Let $j\leq m$ be a positive rational number. In this paper, we prove that the ramification of $L_1/K_1$ is bounded by $j$ if and only if the ramification of $L_2/K_2$ is bounded by $j$. As an application, we prove that the categories of finite separable extensions of $K_1$ and $K_2$ whose ramifications are bounded by $j$ are equivalent to each other, which generalizes a theorem of Deligne to the case of imperfect residue fields. We also show the compatibility of Scholl's theory of higher fields of norms with the ramification theory of Abbes-Saito, and the integrality of small Artin and Swan conductors of abelian extensions of mixed characteristic. 
\end{abstract}

\maketitle
\tableofcontents
\section{Introduction}\label{intro}

Let $K$ be a complete discrete valuation field of residue characteristic $p>0$. Let $k$ be the residue field, $\okey$ be the ring of integers and $\pi=\pi_K$ be a uniformizer of $K$. We define $e(K)$ to be the absolute ramification index of $K$ if $\ch(K)=0$ and an arbitrary positive integer if $\ch(K)=p$.

When $k$ is perfect, the classical ramification theory defines a notion of ramification of any finite separable extension $L/K$ and, for any positive rational number $j$, a notion of whether the ramification of $L/K$ is bounded by $j$ (see \cite{Se}). We let $\mathrm{FE}^{\leqslant j}_K$ denote the category of finite separable extensions $L/K$ whose ramification is bounded by $j$.

On the other hand, Deligne (\cite{De}) defined a ramification theory of truncated discrete valuation rings with perfect residue fields. Let $m$ be a positive integer. A truncated discrete valuation ring of length $m$ is by definition a local ring $A$ with principal maximal ideal which is nilpotent such that $A$ is of length $m$ as an $A$-module. The ring $\okey/(\pi^m)$ is a truncated discrete valuation ring of length $m$, and conversely any truncated discrete valuation ring of length $m$ can be written as such a quotient of the ring of integers of some complete discrete valuation field. For the case where the residue field of $A$ is perfect, he defined a notion of finite extension $B/A$ of truncated discrete valuation rings and a notion of whether its ramification is bounded by $j$, for any positive rational number $j$ satisfying $j\leq m$. Moreover, for any truncated discrete valuation ring $A$ of length $m$ with perfect residue field and $j\leq m$, he also defined a category $(\mathrm{ext}\, A)^j$ of finite extensions $B/A$ whose ramification is bounded by $j$.

Depending on the choice of a presentation $A\simeq \okey/(\pi^m)$ of $A$ as a quotient of a complete discrete valuation ring $\okey$, we have a natural functor $\mathrm{FE}^{\leqslant j}_K\to (\mathrm{ext}\, A)^j$ defined by $L\mapsto \oel/(\pi^m)$. Then Deligne also showed that this is an equivalence of categories. A striking fact is that the category $(\mathrm{ext}\, A)^j$ is independent of the choice of a presentation $A\simeq \okey/(\pi^m)$. This implies that, for any complete discrete valuation fields $K_1$ and $K_2$ with perfect residue fields of characteristic $p$, if there exists a ring isomorphism $\cO_{K_1}/(\pi_{K_1}^m)\simeq \cO_{K_2}/(\pi_{K_2}^m)$, then the categories $\mathrm{FE}^{\leqslant j}_{K_1}$ and $\mathrm{FE}^{\leqslant j}_{K_2}$ are equivalent, even though the characteristics of $K_1$ and $K_2$ may be different. A key point of this equivalence is that, since the residue field $k$ is assumed to be perfect, for any finite separable extension $L/K$ the $\okey$-algebra $\oel$ is generated by a single element $x$, and ramification of the extension $L/K$ can be read off from the Newton polygon of (a translation of) the minimal polynomial of $x$, which is a combinatorial object independent of $\ch(K)$.

For the case where the residue field $k$ is imperfect, a ramification theory of finite separable extensions of $K$ was developed satisfactorily by Abbes and Saito (\cite{AS1} and \cite{AS2}), and we have a category $\mathrm{FE}^{\leqslant j}_K$ of finite separable extensions $L/K$ whose (non-log) ramification is bounded by $j$, as in the case of perfect residue field. In their ramification theory, the notion of whether the (non-log and log) ramification of a finite separable extension $L/K$ is bounded by some positive rational number $j$ is defined by counting the number of geometric connected components of a tubular neighborhood of defining equations of the $\okey$-algebra $\oel$ in the sense of rigid analytic geometry. Note that, in this case, the $\okey$-algebra $\oel$ is not necessarily generated by a single element and thus it seems difficult to control its ramification by Newton polygons. 

Using their works and the author's (\cite{Ha}), Hiranouchi and Taguchi (\cite{HT}) defined, for any truncated discrete valuation ring $A$ of length $m$ whose residue field may be imperfect and any positive rational number $j\leq m$, a category $\mathrm{FFP}^{\leqslant j}_A$ of finite extensions $B/A$ whose ramification is bounded by $j$ (see Definition \ref{defFFP}). In fact, they defined the category by choosing a presentation of $A$ as above. They questioned whether it is independent of the choice, and whether we can generalize the striking equivalence of Deligne to the case of imperfect residue field. 

In this paper, we prove the following correspondence result of (non-log and log) ramification of finite extensions of complete discrete valuation fields which may have different characteristics.

\begin{thm}\label{maincomparison}
Let $L_1/K_1$ and $L_2/K_2$ be finite extensions of complete discrete valuation fields of residue characteristic $p>0$. Let $\pi_{K_i}$ be a uniformizer of $K_i$. Let $m$ be a positive integer satisfying $m \leq \min_i e(K_i)$. Suppose that we have compatible isomorphisms of rings $\cO_{K_1}/(\pi_{K_1}^m)\simeq \cO_{K_2}/(\pi_{K_2}^m)$ and $\cO_{L_1}/(\pi_{K_1}^m)\simeq \cO_{L_2}/(\pi_{K_2}^m)$. 
\begin{enumerate}
\item{(Corollary \ref{LEnonlog})} For any positive rational number $j\leq m$, the ramification of $L_1/K_1$ is bounded by $j$ if and only if the ramification of $L_2/K_2$ is bounded by $j$.
\item{(Corollary \ref{LElog})} For any positive rational number $j\leq m-2$, the log ramification of $L_1/K_1$ is bounded by $j$ if and only if the log ramification of $L_2/K_2$ is bounded by $j$.
\end{enumerate}
\end{thm}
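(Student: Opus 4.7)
My plan is to use Scholze's perfectoid tilting equivalence to identify the Abbes--Saito tubular neighborhoods of $L_1/K_1$ and $L_2/K_2$ after base change to a common perfectoid base, and thereby transfer the condition ``ramification bounded by $j$'' (which, in the Abbes--Saito formulation, amounts to the topological statement that the tubular neighborhood has $[L:K]$ geometric connected components) between the two sides.

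First I would fix compatible presentations $\cO_{K_i}[X_1,\ldots,X_n]/I_i\simeq \cO_{L_i}$, whose reductions modulo $\pi_{K_i}^m$ are identified by hypothesis. The tubular neighborhood of radius $j$ is
\[
X_i^j \;=\; \{x\in\Kbar_i^{\,n} \mid |x_\ell|\leq 1,\ |f(x)|\leq |\pi_{K_i}|^j\ \text{for all}\ f\in I_i\},
\]
and $\pi_0^{\geom}(X_i^j)$ computes the Abbes--Saito invariant. The key preliminary observation is that for $j\leq m$, $X_i^j$ depends only on $I_i \bmod \pi_{K_i}^m$: any $f\equiv 0 \pmod{\pi_{K_i}^m}$ gives $|f(x)|\leq |\pi_{K_i}|^m\leq |\pi_{K_i}|^j$ automatically when $|x_\ell|\leq 1$. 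So the conditions defining $X_1^j$ and $X_2^j$ are literally the same polynomial inequalities within their respective presentations.

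Next I would pass to a common perfectoid base. Pick compatible towers $(\pi_{K_i,n})_{n\geq 0}$ of $p$-power roots of $\pi_{K_i}$, and let $K_i^{\mathrm{pf}}$ be the $\pi$-adic completion of $\bigcup_n K_i(\pi_{K_i,n})$; this is a perfectoid field. Fontaine's description of the tilt as $\varprojlim_{x\mapsto x^p}\cO_{K_i^{\mathrm{pf}}}/\pi_{K_i}$ yields, from the mod-$\pi_{K_i}^m$ identification, a canonical isomorphism $(K_1^{\mathrm{pf}})^\flat \simeq (K_2^{\mathrm{pf}})^\flat$. Extending each $X_i^j$ to a rational subset of the perfectoid closed polydisk over $K_i^{\mathrm{pf}}$ and applying Scholze's tilting equivalence identifies the two extensions with rational subsets of a common characteristic-$p$ perfectoid polydisk, and preserves $\pi_0^{\geom}$. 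Combined with the stability of $\pi_0^{\geom}$ under base change between algebraically closed complete extensions, this yields $\pi_0^{\geom}(X_1^j) = \pi_0^{\geom}(X_2^j)$ and hence part (i).

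The main obstacle is checking that the passage between the rigid tubular neighborhood over $K_i$ and its perfectoid enlargement does not alter $\pi_0^{\geom}$, and that Scholze's tilting isomorphism of rational subsets is compatible with the identification of their mod-$\pi_{K_i}^m$ defining data. These reduce to an explicit computation of the tilt of a rational subset and, ultimately, to the almost purity theorem. For part (ii), the same template applied to the Abbes--Saito \emph{log} tubular neighborhood should work; that construction involves an auxiliary coordinate representing $\pi_K$ together with a division by $\pi_K$, each of which consumes one unit of $\pi$-adic precision and together account for the weaker bound $j\leq m-2$.
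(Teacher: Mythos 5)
Your overall strategy (tilt via Scholze, compare rational subsets of perfectoid polydisks, transfer $\pi_0^\geom$) is the right one and is exactly what the paper does, but there are two genuine gaps in the sketch as written.

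First, the field $K_i^{\mathrm{pf}}$ obtained by adjoining only $p$-power roots of $\pi_{K_i}$ and completing is \emph{not} perfectoid when the residue field $k_i$ is imperfect: adjoining $p$-power roots of a uniformizer does not change the residue field, and a perfectoid field must have perfect residue field. Since the whole point of the theorem is the imperfect-residue-field case (Deligne already did perfect $k$), this is not a side issue. The paper handles it by also adjoining $p$-power roots of a lift of a $p$-basis of $k$, thereby passing to an auxiliary extension $K'/K$ of ramification index one with perfect residue field, and checking that this passage preserves $\pi_0^\geom$ (Subsection 3.5). Related to this, the claim that the mod $\pi^m$ identification ``yields a canonical isomorphism $(K_1^{\mathrm{pf}})^\flat\simeq(K_2^{\mathrm{pf}})^\flat$'' glosses over the fact that the tilt only sees the mod-$\pi$ structure together with the tower of $p$-power roots; the paper sidesteps this by factoring through a common equal-characteristic model $F=k((u))$, which lifts the same truncated discrete valuation ring, and using the theory of fields of norms to produce the comparison of tilts.

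Second, what you call ``the main obstacle'' — that passing from the tubular neighborhood over $\bC$ to its preimage in the perfectoid polydisk $\Spa(\bC\langle X^{1/p^\infty}\rangle,\ldots)$ preserves $\pi_0$ — is indeed the crux, but invoking the almost purity theorem does not resolve it. In the paper this is Lemma \ref{pi0toinf}, and its proof uses the hypothesis $m\leq e(K)$ in an essential way via Lemma \ref{Uconn}: the fiber of the projection $p_{l,0}$ over a classical point must land in a connected polydisk of radius $|\pi|^{m/p^l}$, and the inequality $m\leq e(K)$ (more precisely, $|\pi|^{p^le/(p^{l-1}(p-1))}|a|^{p^l}\leq|\pi|^m$, which is controlled by the valuation of $\zeta_{p^l}-1$) is what makes this polydisk the full fiber. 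Your proposal never explains where the assumption $m\leq\min_i e(K_i)$ is used beyond bookkeeping, which signals that this mechanism is not captured by the sketch. Your heuristic explanation for the $j\leq m-2$ bound in the log case is fine, but the non-log argument needs this $\pi_0$-invariance lemma spelled out, and it is the hardest part of the proof.
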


Note that a similar correspondence of ramification is studied by the author for the case of finite flat group schemes (\cite{Ha_ramcorr} and \cite{Ha_lowram}). As an application of Theorem \ref{maincomparison}, we answer the above questions of Hiranouchi-Taguchi affirmatively for the case of $pA=0$, as follows. 

\begin{thm}\label{mainHT}
\begin{enumerate}
\item{(Theorem \ref{CatIndep})} The category $\mathrm{FFP}^{\leqslant j}_A$ is independent of the choice of a presentation $A\simeq \okey/(\pi^m)$.
\item{(Corollary \ref{equivcats})}
Let $K_1$ and $K_2$ be complete discrete valuation fields with residue fields $k_1$ and $k_2$ of characteristic $p>0$, respectively. Let $j$ be a positive rational number satisfying $j\leq \min_i e(K_i)$. Suppose that the fields $k_1$ and $k_2$ are isomorphic to each other. Then there exists an equivalence of categories
\[
\mathrm{FE}_{K_1}^{\leqslant j} \simeq \mathrm{FE}_{K_2}^{\leqslant j}.
\]
In particular, there exists an isomorphism of topological groups
\[
G_{K_1}/G_{K_1}^j\simeq G_{K_2}/G_{K_2}^j,
\]
where $G_{K_i}^j$ is the $j$-th (non-log) upper ramification subgroup of the absolute Galois group $G_{K_i}$ (\cite[Section 3]{AS1}).
\end{enumerate}
\end{thm}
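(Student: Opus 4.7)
The plan is to deduce both statements of Theorem~\ref{mainHT} from Theorem~\ref{maincomparison} by essentially formal arguments, the only substantive technical input being a lifting construction for truncated extensions. Throughout, fix a positive integer $m$ with $j \leq m \leq \min_i e(K_i)$, and observe that since $p \in (\pi_{K_i}^m)$, each truncated ring $A_i := \cO_{K_i}/(\pi_{K_i}^m)$ is a complete local $\bF_p$-algebra with residue field $k_i$.

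For part~(1), fix two presentations $\iota_i\colon A \xrightarrow{\sim} \cO_{K_i}/(\pi_{K_i}^m)$, $i=1,2$. By definition, a finite extension $B/A$ lies in $\mathrm{FFP}^{\leqslant j}_A$ via $\iota_1$ exactly when there exists a finite extension $L_1/K_1$ whose ramification is bounded by $j$ together with an $A$-algebra isomorphism $B \simeq \cO_{L_1}/(\pi_{K_1}^m)$ compatible with $\iota_1$. To show the same property via $\iota_2$, the plan is to produce a finite extension $L_2/K_2$ and a compatible isomorphism $B \simeq \cO_{L_2}/(\pi_{K_2}^m)$; the pair $(L_1/K_1, L_2/K_2)$ then automatically satisfies the compatibility hypothesis of Theorem~\ref{maincomparison}, which transfers the ramification bound. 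Morphisms are handled identically by lifting maps between truncated extensions. The \textbf{main obstacle} is the lifting step itself: Deligne's perfect-residue-field argument writes $\cO_{L_1} = \cO_{K_1}[x]/(f(x))$ for a single generator $x$ and lifts the minimal polynomial via Krasner's lemma, but in the imperfect-residue-field setting $\cO_{L_1}$ is generally not monogenic over $\cO_{K_1}$. My plan is instead to write $\cO_{L_1}$ as a finitely presented $\cO_{K_1}$-algebra $\cO_{K_1}[y_1, \dots, y_s]/(g_1, \dots, g_t)$, transport the reduction of the relations modulo $\pi_{K_1}^m$ through $\iota_2 \circ \iota_1^{-1}$, arbitrarily lift to elements $\tilde{g}_i \in \cO_{K_2}[y_1,\dots,y_s]$, and extract $\cO_{L_2}$ as an appropriate irreducible component of the integral closure of the resulting finitely presented $\cO_{K_2}$-algebra, verifying compatibility of the truncation and the correct degree by a flatness and generic-rank count.

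For part~(2), Cohen's structure theorem in equal characteristic furnishes a coefficient field $k_i \hookrightarrow A_i$, hence an isomorphism $A_i \simeq k_i[[T]]/(T^m)$ sending $T$ to the class of $\pi_{K_i}$. An abstract isomorphism $k_1 \simeq k_2$ therefore yields a ring isomorphism $\varphi \colon A_1 \xrightarrow{\sim} A_2$. Using part~(1) together with the lifting construction above and Theorem~\ref{maincomparison}, the tautological truncation functor $L \mapsto \cO_L/(\pi_{K_i}^m)$ becomes an equivalence $\mathrm{FE}_{K_i}^{\leqslant j} \simeq \mathrm{FFP}^{\leqslant j}_{A_i}$; composing these with the equivalence $\mathrm{FFP}^{\leqslant j}_{A_1} \simeq \mathrm{FFP}^{\leqslant j}_{A_2}$ induced by $\varphi$ produces the required equivalence
\[
\mathrm{FE}_{K_1}^{\leqslant j} \simeq \mathrm{FE}_{K_2}^{\leqslant j}.
\]
The corresponding isomorphism of topological groups $G_{K_1}/G_{K_1}^j \simeq G_{K_2}/G_{K_2}^j$ is then a formal consequence of this equivalence together with the Galois-theoretic interpretation, each quotient being the automorphism group of the natural separable-closure-valued fibre functor on $\mathrm{FE}_{K_i}^{\leqslant j}$.
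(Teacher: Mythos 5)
Your high-level plan for part~(2) matches the paper (Cohen structure theorem to identify $A_1 \simeq A_2 \simeq k[u]/(u^m)$, then compose the truncation equivalences $\mathrm{FE}_{K_i}^{\leqslant j} \simeq \mathrm{FFP}_{A_i}^{\leqslant j}$ from Proposition~\ref{HTmain}), and your reduction of the Galois-group statement to the categorical equivalence is correct. But for part~(1) your proposal has a real gap that Theorem~\ref{maincomparison} alone cannot fill.

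The issue is morphisms. The category $\mathrm{FFP}^{\leqslant j}_A$ has $\Hom$-sets defined as $\Hom_{A\text{-alg.}}(B,B')/\!\sim_j$, where $\sim_j$ is the \emph{a priori} lift-dependent relation $\psi\sim_j\psi' \Leftrightarrow \cF^j_{(K,\iota)}(\psi)=\cF^j_{(K,\iota)}(\psi')$. Theorem~\ref{maincomparison} is a numerical statement about cardinalities of $\cF^j$ for individual extensions; it matches up \emph{objects} of the two categories by lifting and transferring bounds, but it provides no identification of the sets $\cF^j_{(K_1,\iota_1)}(B)$ and $\cF^j_{(K_2,\iota_2)}(B)$, and in particular no way to compare the maps induced by $\psi$. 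Your sentence ``Morphisms are handled identically by lifting maps between truncated extensions'' does not address this: even granting that a map $B\to B'$ lifts to $\cO_{L_1}\to\cO_{L_1'}$ and to $\cO_{L_2}\to\cO_{L_2'}$, you would still need to know that the two lifts induce the same element of the (putatively identified) morphism sets, which is exactly the content of the missing natural transformation. The paper supplies this through Proposition~\ref{compfunc}: a \emph{natural isomorphism of functors} $\cF^j_{(K,\iota)}\to\cF^j_{(F,\boldsymbol{\iota})}$ on finite flat $A$-algebras, coming out of the perfectoid comparison (Theorem~\ref{comparisonimperf}) applied functorially; from this Corollary~\ref{FuncIndep} and then Lemma~\ref{MorIndep} give $\sim_j^{(1)} = \sim_j^{(2)}$ directly, and Corollary~\ref{ObjIndep} gives the object statement without needing to lift at all.

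A secondary problem: your lifting construction, ``extract $\cO_{L_2}$ as an appropriate irreducible component of the integral closure of the resulting finitely presented $\cO_{K_2}$-algebra,'' is not obviously well-defined or truncation-compatible, and a ``flatness and generic-rank count'' will not by itself show that you recover $B$ modulo $\pi_{K_2}^m$. The paper's Lemma~\ref{liftB} does this carefully: it uses the complete-intersection hypothesis and a regular-sequence count in a completed local ring to produce a regular one-dimensional quotient directly (and then Lemma~\ref{CIlift}~(\ref{CIlift-sep}) shows the resulting extension is separable once the ramification bound is known), precisely because the naive argument from Hiranouchi--Taguchi had a gap at this step. You should invoke or reprove Lemma~\ref{liftB} rather than improvise the lifting.

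In short: your strategy treats Theorem~\ref{maincomparison} as the key input, but that theorem is itself a corollary of the functorial comparison, and the functoriality is what part~(1) needs. The fix is to carry the perfectoid comparison through at the level of the functor $\cF^j_{(K,\iota)}$ on finite flat $A$-algebras, as in Proposition~\ref{compfunc}, rather than only at the level of ramification bounds of individual extensions.
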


We also give the following applications of Theorem \ref{maincomparison} to Scholl's theory of higher fields of norms (\cite{Scholl}) and the integrality of conductors of an abelian extension of $K$. Note that Theorem \ref{mainscholl} was proved by Shun Ohkubo using a totally different method.

\begin{thm}{(Theorem \ref{Schollthm})}\label{mainscholl}
The functor of higher fields of norms is compatible with (non-log and log) ramification.
\end{thm}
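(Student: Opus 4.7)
The plan is to reduce Theorem \ref{mainscholl} to the main comparison result Theorem \ref{maincomparison} by exploiting the defining property of Scholl's higher field of norms. For a Scholl tower $(K_n)_{n\geq 0}$, the associated field $X_K$ is a complete discrete valuation field of characteristic $p$ such that, for every positive integer $m$, there exists $n_0=n_0(m)$ with compatible ring isomorphisms $\cO_{K_n}/(\pi_{K_n}^{m})\simeq \cO_{X_K}/(\pi_{X_K}^{m})$ for all $n\geq n_0$, for suitably chosen uniformizers. This identification is functorial in finite separable extensions adapted to the tower: a finite separable $L/K$ produces the lifted tower $(L_n=LK_n)_{n\geq 0}$ and the finite separable extension $X_L/X_K$, together with compatible isomorphisms $\cO_{L_n}/(\pi_{K_n}^{m})\simeq \cO_{X_L}/(\pi_{X_K}^{m})$.

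Given a positive rational number $j$, I would fix $m\geq j$ in the non-log case (respectively $m\geq j+2$ in the log case), and choose $n\geq n_0(m)$ large enough so that $m\leq e(K_n)$. Applying Corollary \ref{LEnonlog} (resp.\ Corollary \ref{LElog}) to the pair $L_n/K_n$ and $X_L/X_K$ at this truncation level yields: the non-log (resp.\ log) ramification of $L_n/K_n$ is bounded by $j$ if and only if that of $X_L/X_K$ is bounded by $j$. Here I use that, for $n$ large, $X_{K_n}$ and $X_{L_n}$ are canonically identified with $X_K$ and $X_L$ respectively, since the tails of the tower determine the same limit.

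To pass from the $K_n$-level statement to a statement at level $K$, I would invoke the Herbrand-type functoriality of the Abbes-Saito upper numbering filtration along the arithmetic profinite tower $(K_n)_{n\geq 0}$: the filtrations on $G_K$ and on $G_{K_n}$ are related by explicit piecewise linear transition functions determined by the ramification of the tower itself. Combining this functoriality with the previous step translates the truncation-level equivalence provided by Theorem \ref{maincomparison} into the desired compatibility between the Abbes-Saito ramification of $L/K$ and that of $X_L/X_K$, in both the non-log and the log settings.

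The main obstacle is this last step. In the imperfect residue field setting the classical Herbrand theorem is unavailable and must be replaced by the more delicate functoriality of the Abbes-Saito filtration under finite and pro-finite extensions, together with a careful comparison of the transition functions of the tower $(K_n)$ on the mixed characteristic side and on the characteristic-$p$ side of Scholl's construction. Once this functoriality is firmly in place, the rest of the argument is a formal consequence of Theorem \ref{maincomparison}.
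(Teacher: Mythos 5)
Your opening move is exactly the paper's: for a Scholl tower $(K_n)$, the ring $X^+$ surjects onto $\cO_{K_n}/(\xi)$, and $\cO_{K_n}$ does as well, so for $m = v_{K_n}(\xi)$ you get two lifts of the truncated discrete valuation ring $\cO_{K_n}/(\pi_{K_n}^m)$ — one of mixed characteristic and one of characteristic $p$ — together with the cartesian square involving $\cO_{L_n}$ and $X^+(L_\infty)$. Since $v_{K_n}(\xi)\to\infty$ and $v_{K_n}(\xi) < e(K_n)$, for large $n$ one can take $m\geq j$ (resp. $m\geq j+2$) and apply Corollary \ref{LEnonlog} (resp.\ Corollary \ref{LElog}) to conclude that the ramification of $L_n/K_n$ is bounded by $j$ if and only if the ramification of $X(L_\infty)/X$ is.

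The gap is the second half. You declare that one must now ``pass from the $K_n$-level statement to a statement at level $K$'' via a Herbrand-type functoriality of the Abbes–Saito filtration along the tower, and you flag this as the main unresolved obstacle. But no such step is needed, because the paper's \emph{definition} of ``the ramification of $L_\infty/K_\infty$ is bounded by $j$'' is precisely that the ramification of $L_n/K_n$ is bounded by $j$ for all sufficiently large $n$ — it is a tail condition, not a condition on $L_0/K_0$. Once you observe that the comparison corollary makes ``ram.\ of $L_n/K_n$ bounded by $j$'' equivalent, for every sufficiently large $n$, to the $n$-independent statement ``ram.\ of $X(L_\infty)/X$ bounded by $j$,'' the theorem follows immediately. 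By trying to relate the filtration at level $K_n$ to the filtration at level $K_0$ you are proving something stronger than, and different from, the actual assertion, and the Herbrand-type transition theory you would need for that is not available (and not needed) here. In short: your proof is correct through the application of Theorem \ref{maincomparison} at large $n$, but you then mistake the finish line for an obstacle and stop short.
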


\begin{thm}{(Theorem \ref{HA})}
Suppose $\ch(K)=0$. Let $L/K$ be a finite abelian extension. Let $c(L/K)$ ({\it resp.} $c_{\log}(L/K)$) be the Artin conductor ({\it resp.} Swan conductor) of the extension $L/K$.
\begin{enumerate}
\item\label{HAnonlogmain}
If $c(L/K)< e(K)$, then $c(L/K)$ is an integer.
\item\label{HAlogmain}
If $c_{\log}(L/K)< e(K)-2$, then $c_{\log}(L/K)$ is an integer.
\end{enumerate}
\end{thm}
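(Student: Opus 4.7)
The plan is to reduce the integrality of conductors from mixed to equal characteristic $p$, where the analogous integrality statement---a generalization of the Hasse-Arf theorem to complete discrete valuation fields with possibly imperfect residue field---is already known.

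Set $m = e(K)$ and let $k$ be the residue field of $K$. Since $p$ vanishes in $\okey/(\pi_K^m)$, this truncated discrete valuation ring is an $\bF_p$-algebra, and Cohen's structure theorem supplies a coefficient field isomorphic to $k$, yielding an isomorphism $\okey/(\pi_K^m)\simeq k[t]/(t^m)$. Take $K_2:=k((t))$, a complete discrete valuation field of equal characteristic $p$ with uniformizer $t$ and residue field $k$. Then $\cO_{K_2}/(t^m)=k[t]/(t^m)$, providing the required isomorphism of truncated rings.

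For part (\ref{HAnonlogmain}), let $L/K$ be abelian with $c:=c(L/K)<m$ and fix a rational $j$ with $c\leq j<m$. By Corollary \ref{equivcats}, the equivalence $\mathrm{FE}_K^{\leq j}\simeq \mathrm{FE}_{K_2}^{\leq j}$ sends $L$ to an extension $L_2/K_2$; since an equivalence of categories preserves degrees and automorphism groups, $L_2/K_2$ is abelian with $\Gal(L_2/K_2)\simeq \Gal(L/K)$. The construction underlying the equivalence furnishes a compatible isomorphism $\oel/(\pi_K^m)\simeq \cO_{L_2}/(t^m)$, placing us in the hypothesis of Theorem \ref{maincomparison}(1). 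Applying its biconditional for every positive rational $j'\leq m$ gives
\[
L\in \mathrm{FE}_K^{\leq j'} \Longleftrightarrow L_2\in \mathrm{FE}_{K_2}^{\leq j'},
\]
whence $c(L_2/K_2)=c$. Since $K_2$ has equal characteristic $p$, the Hasse-Arf theorem in that setting (valid even when $k$ is imperfect) forces $c(L_2/K_2)\in\bZ$, so $c(L/K)\in\bZ$. Part (\ref{HAlogmain}) follows by the same argument, using Theorem \ref{maincomparison}(2) in place of (1) and the log version of Hasse-Arf in equal characteristic; the hypothesis $c_{\log}(L/K)<m-2$ aligns with the bound $j'\leq m-2$ available in that part (and a bound on the log conductor implies one on the non-log conductor, so Corollary \ref{equivcats} still produces $L_2$).

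The principal obstacle is obtaining the sharp equality $c(L_2/K_2)=c(L/K)$: this requires the full biconditional of Theorem \ref{maincomparison}---the ``only if'' direction prevents a drop in conductor across the characteristic divide---together with a sufficiently general equal-characteristic Hasse-Arf theorem that does not require the residue field to be perfect.
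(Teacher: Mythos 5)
The proposal is correct in its overall strategy and reaches the conclusion by the same basic mechanism as the paper -- truncate at the absolute ramification index, transfer the abelian extension to equal characteristic $p$ with matching conductor, and apply Xiao's Hasse--Arf theorem there -- but it organizes the argument differently. Whereas the paper constructs the equal-characteristic extension $E/F$ directly via Lemma \ref{liftB}, proves separability via Lemma \ref{CIlift} (resp.\ Lemma \ref{CIliftlog}), and establishes that $E/F$ is abelian by an explicit Galois-theoretic argument using the field-of-norms isomorphism $G_{K'_\infty}\simeq G_{F'}$ together with the primariness of $F'/F$ (Lemma \ref{primaryext}), you invoke Corollary \ref{equivcats} to produce $L_2/K_2$ and deduce abelian-ness from preservation of automorphism groups (a purely categorical fact) plus preservation of degree (which holds because the explicit functor is $L\mapsto \oel/(\pi^m)$, so $[L_2:K_2]=\rank_A(\oel/(\pi^m))=[L:K]$). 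This is a legitimate and cleaner packaging of the same underlying machinery. You also use $m=e(K)$ uniformly in both cases, which is the arithmetically correct truncation for applying Corollary \ref{LElog} at $j\leq m-2=e-2$.

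There is one genuine gap, confined to the log case. Your route runs through Corollary \ref{equivcats}, which is a statement about non-log ramification; to apply it you need $c(L/K)<e(K)$, and you obtain this from the hypothesis $c_{\log}(L/K)<e(K)-2$ via the asserted implication ``a bound on the log conductor implies one on the non-log conductor,'' i.e., an inequality of the shape $c(L/K)\leq c_{\log}(L/K)+1$. That comparison of Abbes--Saito conductors is a true statement (it reflects an inclusion between the log and non-log ramification filtrations), but it is neither stated nor proved anywhere in this paper, so appealing to it here leaves a hole. The paper's own proof circumvents this entirely: it lifts the truncated extension $B=\oel/(\pi^m)$ to $E/F$ by Lemma \ref{liftB} \emph{without} first knowing $E/F$ separable, then uses the log-ramification biconditional (Corollary \ref{LElog}) together with Lemma \ref{CIliftlog} to establish separability, and only then runs the Galois argument. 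You should either supply a citation or a proof of the conductor comparison you use, or, better, produce $L_2$ in the log case by the same Lemma \ref{liftB} + Lemma \ref{CIliftlog} route and keep the equivalence of categories only for the abelian-ness step.
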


The key idea of the proof of Theorem \ref{maincomparison} is to compare the sets of geometric connected components of affinoid varieties of different characteristics using the theory of perfectoid spaces due to Scholze (\cite{Sch}). By a base change, we reduce ourselves to such a comparison of the case where the residue field $k$ is perfect. Namely, we consider the following situation: we have a diagram of surjections
\[
k[[u]]\to A \gets \okey,
\]
where the images of $\pi$ and $u$ in $A$ coincide, and we also have a set of polynomials $\bar{f}=\{\bar{f}_1,\ldots,\bar{f}_r\}$ in $A[X]$. Here we put $X=(X_1,\ldots,X_n)$. Let $f\subseteq \okey[X]$ and $\mathbf{f}\subseteq k[[u]][X]$ be lifts of $\bar{f}$. Let $\bC$ be the completion of an algebraic closure of $K$. Let $\fR$ be its tilt (\cite[Section 3]{Sch}), which is defined as the fraction field of the inverse limit ring
\[
\oR=\varprojlim_\Phi \oc/(\pi^m)
\]
along the $p$-th power Frobenius map. The field $k((u))$ is considered as a subfield of $\fR$ by $u\mapsto \upi$, where we define $\upi=(\pi_l)_{l\geq 0}$ by choosing a system of $p$-power roots of $\pi$ in $\bC$ satisfying $\pi_0=\pi$ and $\pi_{l+1}^p=\pi_l$. Consider the adic spaces over $\bC$
\[
X_{\bC,0}^\ad=\Spa(\bC\la X\ra,\oc\la X\ra),\ X_{\bC,\infty}^\ad=\Spa(\bC\la X^{1/p^\infty}\ra,\oc\la X^{1/p^\infty}\ra)
\]
and also similar adic spaces $X_{\fR,0}^\ad$ and $X_{\fR,\infty}^\ad$ over $\fR$. Then we have a diagram
\[
\xymatrix{
X_{\bC,\infty}^\ad\ar[r]^-\tau_-{\sim}\ar[d] & X_{\fR,\infty}^\ad\ar[d]\\
X_{\bC,0}^\ad & X_{\fR,0}^\ad,
}
\]
where the map $\tau$ is the homeomorphism of \cite[Theorem 6.3]{Sch}. The equations $f$ and $\mathbf{f}$ define the rational subsets $X^{j,\ad}_{\bC}\subseteq X^\ad_{\bC,0}$ and $X^{j,\ad}_{\fR}\subseteq X^\ad_{\fR,0}$ given by the inequalities
\[
|f_i(x)|\leq |\pi(x)|^j \text{ and } |\mathbf{f}_i(x)|\leq |u(x)|^j,
\]
respectively. Here $|\cdot (x)|$ denotes the associated continuous valuation for any point $x$ of these adic spaces. The inverse image of $X^{j,\ad}_{\fR}$ in $X_{\bC,\infty}^\ad$ by the composite in the above diagram is the rational subset given by the inequality
\[
|\mathbf{f}_i^\sharp(x)|\leq |\pi(x)|^j,
\]
where $(\cdot)^\sharp:\oR\la X^{1/p^\infty} \ra\to \oc \la X^{1/p^\infty} \ra$ is a natural multiplicative map (\cite[Theorem 6.3]{Sch}). From the choice of $f$ and $\mathbf{f}$, we can prove the congruence
\[
\mathbf{f}_i^\sharp\equiv f_i\bmod \pi^m.
\]
Thus the assumption on $j$ implies that the inverse image coincides with the inverse image of $X^{j,\ad}_{\bC}$ in $X_{\bC,\infty}^\ad$. Then Theorem \ref{maincomparison} follows by showing that the vertical arrows of the above diagram induce bijections between the sets of connected components of the rational subsets $X^{j,\ad}_{\bC}$, $X^{j,\ad}_{\fR}$ and their inverse images in $X^\ad_{\bC,\infty}$, $X^{\ad}_{\fR,\infty}$.

\noindent
{\bf Acknowledgments.} The author would like to thank Yuichiro Taguchi for stimulating discussions, answering questions on the paper \cite{HT}, valuable comments on earlier drafts and the invitation to Korea Institute for Advanced Study (KIAS), where a part of this work was carried out. He is grateful for the hospitality provided by KIAS. He also would like to thank Shun Ohkubo for pointing out that Theorem \ref{mainscholl} would follow if we could generalize the aforementioned theorem of Deligne to the case of imperfect residue field, and Yoichi Mieda for helpful comments on adic spaces which improved proofs. This work was supported by JSPS KAKENHI Grant Number B-23740025.

%--------------------------------------------------------------------------------------------

%--------------------------------------------------------------------------------------------

\section{Lemmas on connected components of analytic spaces}

Let $K$ be a complete valuation field of rank one. Let $\Ksep$ be a separable closure of $K$, which we consider as a valuation field by extending the valuation of $K$ naturally. Let $\bC$ be the completion of $\Ksep$. In this section, we show lemmas which compare the sets of connected components in various settings of analytic geometry over $K$. First we show the following lemma comparing the sets of connected components between $K$-affinoid varieties and their associated adic spaces (\cite{Hu_Fo}).

\begin{lem}\label{adrigconn}
Let $A$ be a $K$-affinoid algebra in the sense of \cite[Definition 6.1.1/1]{BGR} and $\mathring{A}$ be the subring of power-bounded elements of $A$. Let $X=\Spv(A)$ be its associated $K$-affinoid variety and $X^\ad=\Spa(A,A^\circ)$ be its associated adic space. Then we have a natural bijection $\pi_0(X^\ad) \to \pi_0(X)$.
\end{lem}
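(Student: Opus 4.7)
The plan is to identify both $\pi_0(X)$ and $\pi_0(X^{\ad})$ canonically with the finite set of primitive idempotents of $A$, so that the map of the lemma, induced by the inclusion of classical points $X\hookrightarrow X^{\ad}$ sending a maximal ideal $\mathfrak{m}$ to its associated rank-one continuous valuation, becomes the identity under these identifications.

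First, since $A$ is Noetherian \cite[Section 5.2]{BGR}, it decomposes uniquely as a finite product $A\simeq \prod_{i=1}^{n} A_i$ of indecomposable $K$-affinoid algebras, corresponding to a complete system of primitive idempotents $e_1,\ldots,e_n\in A$. From $e_i^2=e_i$ one gets $v(e_i)\in\{0,1\}$ for every continuous valuation $v$, so the locus
\[
\Spa(A_i,A_i^\circ)=\{x\in X^{\ad}\,:\,|e_i(x)|=1\}=\{v(1)\leq v(e_i)\}
\]
is a rational open in $X^{\ad}$ whose complement $\{x\,:\,|1-e_i(x)|=1\}$ is again a rational open. Thus each $\Spa(A_i,A_i^\circ)$ is clopen in $X^{\ad}$ and we obtain a disjoint decomposition $X^{\ad}=\bigsqcup_i\Spa(A_i,A_i^\circ)$. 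The parallel decomposition $X=\bigsqcup_i\Spv(A_i)$ is an admissible disjoint union in the $G$-topology of $X$, and the inclusion $X\hookrightarrow X^{\ad}$ respects both.

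Next, I would show that each piece is connected. Suppose $\Spa(A_i,A_i^\circ)=U\sqcup V$ is a clopen decomposition with $U$ and $V$ both non-empty; by quasi-compactness of the ambient affinoid adic space, both $U$ and $V$ are quasi-compact open. By Huber's acyclicity \cite{Hu_Fo}, the structure sheaf satisfies $\Gamma(\Spa(A_i,A_i^\circ),\cO)=A_i$. Gluing the sections $1\in\cO(U)$ and $0\in\cO(V)$, which agree on the empty intersection, yields an idempotent $e\in A_i$; neither $e$ nor $1-e$ vanishes, since any non-empty open of $\Spa(A_i,A_i^\circ)$ supports a continuous valuation under which $1\neq 0$. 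This contradicts the indecomposability of $A_i$. The corresponding argument on the rigid side is identical, with Tate's acyclicity theorem \cite[Section 8]{BGR} in place of Huber's.

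Combining these two steps, the decompositions above are precisely the decompositions into connected components on each side, both $\pi_0(X)$ and $\pi_0(X^{\ad})$ are canonically in bijection with $\{1,\ldots,n\}$, and the inclusion of classical points $X\hookrightarrow X^{\ad}$ induces the desired bijection $\pi_0(X^{\ad})\to\pi_0(X)$. The step I expect to require the most care is the sheaf-theoretic gluing on the adic side — verifying that a clopen decomposition really does produce a non-trivial idempotent of $A_i$ and not merely a locally constant section — but once the global-section identification $\Gamma(X^{\ad},\cO_{X^{\ad}})=A$ is in hand, the argument reduces cleanly to the purely algebraic decomposition of $A$.
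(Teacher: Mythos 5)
Your proof is correct, but it takes a genuinely different route from the paper's. You reduce both $\pi_0(X)$ and $\pi_0(X^\ad)$ to the common finite set of primitive idempotents of $A$, using the decomposition $A\simeq\prod_i A_i$ into indecomposable factors, the observation that $\{|e_i(x)|=1\}$ is clopen, and Tate/Huber acyclicity to deduce that each factor contributes a connected piece; the bijection is then essentially tautological. The paper's proof never touches the structure sheaf: it invokes the bijection between quasi-compact open subsets of $X^\ad$ and quasi-compact admissible open subsets of $X$, together with the correspondence of open and admissible coverings from \cite[(1.1.11)]{Hu_Et}, shows by a direct covering argument that a connected quasi-compact admissible open $U\subseteq X$ corresponds to a connected $U^\ad\subseteq X^\ad$, and then establishes that the $U_i^\ad$ arising from the connected components $U_i$ of $X$ are pairwise disjoint via a constructibility argument (\cite[Corollary 4.2]{Hu_C} applied to $U_i^\ad\cap U_j^\ad$). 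Your approach is more algebraic and makes the canonicity of the bijection manifest, at the price of needing the acyclicity theorem for adic affinoids; the paper's approach is purely topological and fits better with the constructible-set manipulations used again in Lemma~\ref{connCp} and Lemma~\ref{pi0toinf}, but has to argue disjointness of the lifted components separately rather than getting it for free from the idempotent decomposition.
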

\begin{proof}
The set $X$ is naturally considered as a subset of $X^\ad$. Since $X$ is quasi-separated, the association $U \mapsto U\cap X$ gives a bijection from the set of quasi-compact open subsets of $X^\ad$ to the set of quasi-compact admissible open subsets of $X$. Moreover, the notions of open covering and admissible open covering correspond to each other by this bijection (\cite[(1.1.11)]{Hu_Et}). 

Let $U$ be a quasi-compact admissible open subset of $X$ and $U^\ad$ be the associated quasi-compact open subset of $X^\ad$ via the above bijection. We first prove that if $U$ is connected, then $U^\ad$ is also connected. Indeed, suppose that we have a decomposition $U^\ad=V_1^\ad \coprod V_2^\ad$ of $U^\ad$ into the disjoint union of open subsets $V_i^\ad$. Since $U^\ad$ is quasi-compact, the open subsets $V_i^\ad$ are also quasi-compact. Put $V_i=V_i^\ad\cap X$, which is a quasi-compact admissible open subset of $X$. We have $U=V_1\coprod V_2$ and this is an admissible open covering by the above bijection. Since $U$ is connected, we may assume $U=V_1$ and thus we obtain $U^\ad=V_1^\ad$. This implies that $U^\ad$ is connected.

Let $U_1,\ldots, U_n$ be the connected components of $X$. Each $U_i$ is a rational subdomain of $X$ and thus quasi-compact. Let $U_i^\ad$ be the associated quasi-compact open subset of $X^\ad$. Since the covering $X=\coprod_{i=1}^n U_i$ is an admissible open covering, the above bijection shows $X^\ad=\cup_{i=1}^n U_i^\ad$. Since $U_i^\ad\cap U_j^\ad$ is constructible and $X\cap(U_i^\ad\cap U_j^\ad)=U_i\cap U_j$, \cite[Corollary 4.2]{Hu_C} implies $U_i^\ad\cap U_j^\ad=\emptyset$ for any $i\neq j$. Thus each $U_i^\ad$ is a connected component of $X^\ad$ and the lemma follows. 
\end{proof}

Let $A$ be a $K$-affinoid algebra which is geometrically reduced. We define the set of geometric connected components of $\Spv(A)$ as
\[
\pi_0^\geom(\Spv(A))=\varprojlim_{L/K} \pi_0(\Spv(A\otimes_K L)),
\]
where the limit on the right-hand side runs over the category of finite separable extensions of $K$ in $\Ksep$. This set is a finite set and the inverse system is constant for any sufficiently large $L$, by the reduced fiber theorem (\cite[Theorem 1.3]{BLR4}. See also \cite[Theorem 4.2]{AS1}).  It has a natural continuous action of the absolute Galois group $G_K=\Gal(\Ksep/K)$.

\begin{lem}\label{connCp}
Let $A$ be a geometrically reduced $K$-affinoid algebra. Then there exists a natural isomorphism of finite $G_K$-sets
\[
\pi_0(\Spa(A\hat{\otimes}_K \bC, (A\hat{\otimes}_K \bC)^\circ))\to \pi_0^\geom(\Spv(A)).
\]
\end{lem}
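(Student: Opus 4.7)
The plan is to reduce to the case where $A$ is geometrically connected and then argue that $\Spa(A \hat{\otimes}_K \bC, (A\hat{\otimes}_K\bC)^\circ)$ is connected by descending any idempotent of $A\hat{\otimes}_K\bC$ to a finite separable extension of $K$ via a Hensel-type argument.

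First, by the reduced fiber theorem, there is a finite separable extension $L_0/K$ such that the transition maps $\pi_0(\Spv(A\otimes_K L)) \to \pi_0(\Spv(A\otimes_K L_0))$ are bijections for all finite separable $L \supseteq L_0$, and each connected component of $\Spv(A\otimes_K L_0)$ is geometrically connected. Decompose accordingly $A\otimes_K L_0 = \prod_{i=1}^n B_i$, so that $A\hat{\otimes}_K \bC = \prod_{i=1}^n B_i\hat{\otimes}_{L_0}\bC$ since finite products commute with completed tensor products. By Lemma \ref{adrigconn} (applied over $L_0$ and over $\bC$), together with the fact that the clopen decomposition of an affinoid adic space corresponds to the idempotent decomposition of its ring of global sections, it suffices to prove that for each geometrically connected $B = B_i$ the ring $B\hat{\otimes}_{L_0}\bC$ admits no non-trivial idempotent.

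For the main step, set $\bD = L_0^{\sep}\subseteq \bC$. The ring $B\otimes_{L_0}\bD = \varinjlim_L B\otimes_{L_0} L$ is the filtered colimit of the $L$-affinoid algebras $B\otimes_{L_0} L$ for finite subextensions $L/L_0$ of $\bD$; since $B$ is geometrically connected, each $B\otimes_{L_0} L$ is connected and so $B\otimes_{L_0}\bD$ has no non-trivial idempotent. I claim any idempotent $e \in B\hat{\otimes}_{L_0}\bC$ in fact lies in $B\otimes_{L_0}\bD$. Approximate $e$ by some $f \in B\otimes_{L_0}\bD$ with $|e-f| < 1$ in the affinoid norm, and pick finite $L/L_0$ with $f \in B\otimes_{L_0} L$. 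Then $|f^2 - f|<1$, so Hensel's lemma applied to the polynomial $T^2-T$ in the $L$-affinoid algebra $B\otimes_{L_0} L$ produces an idempotent $e' \in B\otimes_{L_0} L$ with $|e'-f|<1$, hence $|e-e'|<1$ in $B\hat{\otimes}_{L_0}\bC$. From $e^2 = e$ and $(e')^2 = e'$ one computes the identity $(e-e')^3 = e - e'$, so $(e-e')((e-e')^2-1)=0$; but $|(e-e')^2|<1 = |1|$ rules out $(e-e')^2 = 1$, forcing $e = e' \in B\otimes_{L_0}\bD$. Consequently $e\in\{0,1\}$.

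Finally, the $G_K$-equivariance is verified by tracing identifications: both sides are canonically identified with $\pi_0(\Spv(A\otimes_K L_0))$, on which $G_K$ acts through its finite quotient by $\Gal(L_0'/K)$ (for $L_0'$ the Galois closure of $L_0/K$); on the left this action is induced by the continuous $G_K$-action on $\bC$ preserving $L_0\subseteq\bC$, and on the right it is the tautological one. I expect the main obstacle to be verifying that the affinoid norm restricted from $B\hat{\otimes}_{L_0}\bC$ to the subring $B\otimes_{L_0} L$ agrees (up to multiplicative constants that do not spoil strict inequalities below $1$) with the affinoid norm used to apply Hensel's lemma; once this and the ultrametric argument for $(e-e')^3 = e-e'$ are in hand, the proof proceeds as outlined.
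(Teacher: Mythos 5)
Your proof is correct in essence but takes a genuinely different route from the paper. The paper reduces to showing that the projection $p_{\bC,K}:X^\ad_\bC\to X^\ad_K$ has connected fibers over connected components: it first proves, via a careful exact-sequence and reduced-special-fiber argument, that $(A\hat{\otimes}_K\bC)^\circ=\mathring{A}\hat{\otimes}_{\okey}\oc$, uses this together with \cite[Lemma 3.9(i)]{Hu_Fo} to get surjectivity of $p_{\bC,L}$, then uses a homeomorphism $X^\ad_\bC\simeq\Spa(A\otimes_K\Ksep,\mathring{A}\otimes_{\okey}\cO_{\Ksep})$ to see that open subsets of $X^\ad_\bC$ descend to finite levels, from which connectedness of fibers follows. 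Your proof instead reduces everything to the statement that $B\hat{\otimes}_{L_0}\bC$ has no nontrivial idempotents when $B$ is geometrically connected, and then uses density of $B\otimes_{L_0}\bD$, a Newton-type lifting of approximate idempotents, and the algebraic identity $(e-e')^3=e-e'$ to descend idempotents to a finite level. Both approaches need the reduced fiber theorem up front to make the inverse system stabilize; yours then avoids the explicit computation of the ring of power-bounded elements and the adic-space descent, at the cost of having to verify the compatibility of sup norms under base change (which you flag, and which does hold: since $\Spm(B\hat{\otimes}_{L_0}\bC)\to\Spm(B\otimes_{L_0}L)$ is surjective, the spectral norm on $B\otimes_{L_0}L$ coincides with the restriction of the one on $B\hat{\otimes}_{L_0}\bC$).

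One small imprecision: after obtaining $(e-e')\bigl((e-e')^2-1\bigr)=0$, writing that $\lvert(e-e')^2\rvert<1$ ``rules out $(e-e')^2=1$'' is weaker than what you actually need. The correct point is that $1-(e-e')^2$ is a unit (geometric series in the Banach algebra, using $\lvert(e-e')^2\rvert<1$), so $(e-e')^2-1$ is a unit and hence a non-zerodivisor, which then forces $e-e'=0$. With that minor fix the argument is complete. You should also make sure $\lvert 2e-1\rvert\le 1$ (it equals $1$ by power-multiplicativity since $(2e-1)^2=1$) when bounding $\lvert f^2-f\rvert=\lvert(f-e)\bigl((f-e)+(2e-1)\bigr)\rvert$ to justify that $f$ is an approximate idempotent of norm strictly less than $1$ before invoking the Newton lift.
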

\begin{proof}
For any extension $L/K$ of complete valuation fields of rank one, the ring $A\hat{\otimes}_K L$ is an $L$-affinoid algebra and we put
\[
X^\ad_L=\Spa(A\hat{\otimes}_K L,(A\hat{\otimes}_K L)^\circ).
\]
By Lemma \ref{adrigconn}, it suffices to show that the natural map of finite $G_K$-sets
\[
\pi_0(X^\ad_\bC)\to \varprojlim_{L/K}\pi_0(X_L^\ad)
\]
is a bijection, where the limit runs over the category of finite separable extensions of $K$ in $\Ksep$. 

Let $\kbar$ be the residue field of $\Ksep$, which is an algebraic closure of $k$. By the reduced fiber theorem and replacing $K$ with a sufficiently large finite separable extension, we may assume that $\mathring{A}$ is topologically of finite type over $\okey$ and $\mathring{A}\otimes_{\okey}\kbar$ is reduced. Then we have $(A\otimes_K L)^\circ=\mathring{A}\otimes_{\okey}\oel$ for any finite separable extension $L/K$. We may also assume that the inverse system $\{\pi_0(X^\ad_L)\}_{L/K}$ is constant. This implies that for any connected component $C$ of $X^\ad_K$, its inverse image $p_{L,K}^{-1}(C)$ by the natural projection $p_{L,K}:X^\ad_L\to X^\ad_K$ is a connected component of $X^\ad_L$. 

We claim the equality $(A\hat{\otimes}_K \bC)^\circ=\mathring{A}\hat{\otimes}_{\okey}\oc$. Indeed, let $\varpi$ be any non-zero element of the maximal ideal of $\okey$ and consider the exact sequence
\[
0 \to \mathring{A}\otimes_{\okey}\oc \overset{\times \varpi^l}{\to} \mathring{A}\otimes_{\okey}\oc \to \mathring{A}\otimes_{\okey}(\oc/\varpi^l\oc)\to 0
\]
for any positive integer $l$. Since the $\oc$-algebra $\mathring{A}\otimes_{\okey}\oc$ is $\varpi$-torsion free, the $\varpi$-adic topology on the ring $\mathring{A}\otimes_{\okey}\oc$ of the middle term of the sequence induces the $\varpi$-adic topology on the ring of its left term. Taking the $\varpi$-adic completion, we have an exact sequence
\[
0 \to \mathring{A}\hat{\otimes}_{\okey}\oc \overset{\times \varpi^l}{\to} \mathring{A}\hat{\otimes}_{\okey}\oc \to \mathring{A}\otimes_{\okey}(\oc/\varpi^l\oc)\to 0
\]
and thus the ring $\mathring{A}\hat{\otimes}_{\okey}\oc$ is torsion free. Moreover, since $\mathring{A}$ is topologically of finite type, we can choose an $\okey$-algebra surjection $\okey\langle X_1,\ldots,X_n\rangle \to \mathring{A}$. Via the natural surjection
\[
\okey\langle X_1,\ldots,X_n\rangle\otimes_{\okey}\oc \to \mathring{A}\otimes_{\okey}\oc,
\]
the $\varpi$-adic topology on the right-hand side coincides with the quotient topology of the $\varpi$-adic topology on the left-hand side. Thus we obtain a surjection
\[
\oc\langle X_1,\ldots,X_n\rangle\simeq \okey\langle X_1,\ldots,X_n\rangle\hat{\otimes}_{\okey}\oc \to \mathring{A}\hat{\otimes}_{\okey}\oc.
\]
By the above exact sequence, the special fiber of $\mathring{A}\hat{\otimes}_{\okey}\oc$ is isomorphic to the $\kbar$-algebra $\mathring{A}\otimes_{\okey}\kbar$, which is reduced by assumption. Then \cite[Proposition 1.1]{BLR4} implies the equality $(A\hat{\otimes}_K \bC)^\circ=\mathring{A}\hat{\otimes}_{\okey}\oc$ and the claim follows.

By this claim, we have
\[
X^\ad_\bC=X^\ad_L\times_{\Spa(L,\oel)}\Spa(\bC,\oc)
\]
for any finite separable extension $L/K$ and \cite[Lemma 3.9 (i)]{Hu_Fo} implies that the projection $p_{\bC,L}:X^\ad_\bC\to X^\ad_L$ is a surjection.

Note that for any affinoid ring $(R,S)$, there exists a natural homeomorphism
\[
\Spa(\hat{R},\hat{S})\to \Spa(R,S)
\]
preserving rational subsets, where $\hat{R}$ and $\hat{S}$ are the completions of $R$ and $S$, respectively (\cite[Proposition 3.9]{Hu_C}). Thus we have a homeomorphism 
\[
X_\bC^\ad\to \Spa(A\otimes_K \Ksep, \mathring{A}\otimes_{\okey}\cO_{\Ksep})
\]
preserving rational subsets, where the topology of the ring $A\otimes_K \Ksep$ is given by the $\varpi$-adic topology of the subring $\mathring{A}\otimes_{\okey}\cO_{\Ksep}$ for any non-zero element $\varpi$ in the maximal ideal of $\okey$. From this homeomorphism, we see that any rational subset of $X_\bC^\ad$ is the inverse image of a rational subset of $X_L^\ad$ for some finite separable extension $L$ of $K$.

Let $C$ be any connected component of $X^\ad_K$. To prove the lemma, it is enough to show that the inverse image $p^{-1}_{\bC,K}(C)$ is connected. Note that $C$ is a rational subset. Suppose that we have a decomposition $p^{-1}_{\bC,K}(C)=V_1\coprod V_2$ into the disjoint union of non-trivial open subsets. Since $p_{\bC,K}^{-1}(C)$ is also a rational subset, the open subsets $V_i$ are quasi-compact and thus are finite unions of rational subsets. This implies that the open subsets $V_i$ are the inverse images of some open subsets of $X^\ad_L$ for a sufficiently large finite separable extension $L$ of $K$. Since the projection $p_{\bC,L}$ is a surjection and $p^{-1}_{L,K}(C)$ is connected, the lemma follows.
\end{proof}

%------------------------------------------------------------------------------------------------

%------------------------------------------------------------------------------------------------

\section{Comparison of geometric connected components for affinoids of different characteristics}\label{comp}

%------------------------------------------------------------------------------------------------

%------------------------------------------------------------------------------------------------

\subsection{Lifts of truncated discrete valuation rings}\label{subseclift}

Let $A$ be a truncated discrete valuation ring of length $m$ (\cite[Subsection 1.1]{De}, \cite[Section 2]{HT}) with residue field $k$ of characteristic $p>0$. We fix a uniformizer $\bar{\pi}$ of $A$.

Let us consider a complete discrete valuation field $K$ and a surjective local homomorphism $\iota: \okey \to A$. We refer to such a pair $(K,\iota)$ as a lift of the truncated discrete valuation ring $A$. Note that a lift of $A$ always exists (\cite[Subsection 1.1]{De}). Let us fix a uniformizer $\pi$ of $K$ satisfying $\iota(\pi)=\bar{\pi}$. The map $\iota$ induces an isomorphism $\okey/(\pi^m)\simeq A$. We identify the residue field of $K$ with $k$ using this isomorphism. Let $e=e(K)$ be as in Section \ref{intro}. We also fix an algebraic closure $\Kbar$ of $K$ and extend the valuation $|\cdot|$ of $K$ to $\Kbar$. The residue field of $\Kbar$ is denoted by $\kbar$. Let $\bC$ be the completion of $\Kbar$ and $\mathfrak{m}_\bC$ be the maximal ideal of the valuation ring $\oc$. The field $\bC$ is a perfectoid field in the sense of \cite[Definition 3.1]{Sch}. We let $\fR$ denote its tilt. 

Suppose $pA=0$. Then we have $m\leq e$ (for the case of $\ch(K)=p$, this means that we take an arbitrarily large $e$ so that this inequality holds) and the field $\fR$ can be constructed using $m$ as follows. Let $\oR$ be the inverse limit ring
\[
\oR=\varprojlim_\Phi(\okbar/(\pi^m)\gets\okbar/(\pi^m)\gets\cdots),
\]
where $\Phi$ means that the transition maps are given by $x\mapsto x^p$. We have a natural multiplicative map
\[
\oR\to \oc,\ x\mapsto x^\sharp
\]
sending $x=(x_0,x_1,\ldots)\in \oR$ to the limit $x^\sharp=\lim_{l\to \infty} \hat{x}_l^{p^l}$ in the ring $\oc$, where $\hat{x}_l\in \oc$ is a lift of $x_l$. Note that the element $x^\sharp$ is independent of the choice of lifts $\hat{x}_l$ and the equality 
\[
x^\sharp\bmod \pi^m=\prjt_0(x)
\]
holds. The ring $\oR$ is a complete valuation ring of rank one and characteristic $p$ with algebraically closed fraction field $\fR$ whose valuation is defined by $|x|=|x^\sharp|$, and the map $(\cdot)^\sharp$ extends to a natural multiplicative map $(\cdot)^\sharp:\fR\to \bC$. If $K$ is of characteristic $p$, then the map $(\cdot)^\sharp$ gives an isomorphism of valuation fields $\fR\to \bC$. The maximal ideal of the valuation ring $\oR$ is denoted by $\mathfrak{m}_{\fR}$.

We fix a system $(\pi_l)_{l\geq 0}$ of $p$-power roots of $\pi$ in $\Kbar$ such that $\pi_0=\pi$ and $\pi_{l+1}^p=\pi_l$. The system defines an element $\upi=(\pi_0,\pi_1,\ldots)$ of the ring $\oR$ satisfying $\upi^\sharp=\pi$. 

Suppose also that $A$ is endowed with a $k$-algebra structure such that the diagram
\[
\xymatrix{
k \ar[r]\ar[rd]_{\id} & A\ar[d]\\
& k
}
\]
commutes, where the vertical arrow is the reduction map. Then the ring $A$ also lifts to a complete discrete valuation ring of equal characteristic $p$. Namely, the map $\boldsymbol{\iota}:k[[u]]\to A$ sending $u$ to $\bar{\pi}$ gives an isomorphism of $k$-algebras $k[[u]]/(u^m)\simeq A$. We put $F=k((u))$. Then the pair $(F, \boldsymbol{\iota})$ defines a lift of $A$. For any algebraic closure $\bar{F}$ of $F$, we extend the $u$-adic valuation $|\cdot|$ of $F$ to $\bar{F}$ naturally. We normalize it as $|u|=|\pi|$.

%------------------------------------------------------------------------------------------------

%------------------------------------------------------------------------------------------------

\subsection{Tubular neighborhoods of equations over $A$}\label{subsectub}

Let $A$ be a truncated discrete valuation ring of length $m$ and $(K,\iota)$ be a lift of $A$. Let $n$ be a positive integer and $\bar{f}=\{\bar{f}_1,\ldots,\bar{f}_r\}$ be a finite subset of the polynomial ring $A[X_1,\ldots,X_n]$. Let $f_i$ be a lift of $\bar{f}_i$ by the surjection
\[
\okey[X_1,\ldots,X_n]\to A[X_1,\ldots,X_n]
\]
induced by $\iota:\okey \to A$. For any $j=(j_1,\ldots j_r)\in (\bQ\cap (0,m])^r$, let us write $j_i=k_i/l_i$ with positive integers $k_i$ and $l_i$. Put
\[
\sB^j_K(\bar{f},n)=K\langle X_1,\ldots,X_n\rangle \langle \frac{f_1^{l_1}}{\pi^{k_1}},\ldots,\frac{f_r^{l_r}}{\pi^{k_r}} \rangle.
\]
This ring is a $K$-affinoid algebra independent of the choices of presentations $j_i=k_i/l_i$ and lifts $f_i$. Then we define the $j$-th tubular neighborhood $X^{j}_K(\bar{f},n)$ of $\bar{f}$ with respect to $n$ along the lift $(K,\iota)$ to be the following rational subdomain of the $n$-dimensional rigid analytic unit polydisc
\begin{align*}
X^{j}_K(\bar{f},n)&=\Spv(\sB^j_K(\bar{f},n))\\
&=\{x\in\Spv(K\la X_1,\ldots,X_n\ra)\mid |f_i(x)|\leq |\pi|^{j_i} \text{ for any }i\}.
\end{align*}

Suppose that $pA=0$ and $A$ is endowed with a $k$-algebra structure which gives a section of the reduction map. Let $(F,\boldsymbol{\iota})$ be the lift of $A$ as above. Then we can construct a similar tubular neighborhood of $\bar{f}$ on the side of $F$: Choose a lift $\mathbf{f}_i$ of $\bar{f}_i$ by the surjection
\[
k[[u]][X_1,\ldots,X_n] \to A[X_1,\ldots,X_n]
\]
induced by $\boldsymbol{\iota}: k[[u]]\to A$. We define the $j$-th tubular neighborhood $X^{j}_F(\bar{f},n)$ of $\bar{f}$ with respect to $n$ along the lift $(F,\boldsymbol{\iota})$ by
\begin{align*}
\sB^j_F(\bar{f},n)&=F\langle X_1,\ldots,X_n\rangle \langle \frac{\mathbf{f}_1^{l_1}}{u^{k_1}},\ldots,\frac{\mathbf{f}_r^{l_r}}{u^{k_r}} \rangle,\\
X^{j}_F(\bar{f},n)&=\Spv(\sB^j_F(\bar{f},n))\\
&=\{x\in\Spv(F\la X_1,\ldots,X_n\ra)\mid |\mathbf{f}_i(x)|\leq |u|^{j_i} \text{ for any }i\}.
\end{align*}
These are also independent of the choices of presentations $j_i=k_i/l_i$ and lifts $\mathbf{f}_i$. Note that the numbers of geometric connected components of these affinoid varieties are finite. 

%------------------------------------------------------------------------------------------------

%------------------------------------------------------------------------------------------------

\subsection{The case of perfect residue field}\label{subsecperf}

Now we assume $pA=0$ until the end of Section \ref{comp}. We also assume that the residue field $k$ of $A$ is perfect until the end of Subsection \ref{subseccomp}. Since $k$ is perfect, we have the unique inclusions $k\to A$ and $k\to \okey/(\pi^m)$ which are sections of the reduction maps by \cite[Chapitre II, \S 4, Proposition 8]{Se}, and the isomorphism $\okey/(\pi^m)\to A$ induced by $\iota$ is $k$-linear. Let $K^\mathrm{nr}$ be the maximal unramified extension of $K$ in $\Kbar$. Since the residue field of $K^\mathrm{nr}$ is $\kbar$ in this case, we also have the unique section $\kbar\to \cO_{K^\mathrm{nr}}/(\pi^m)$ of the reduction map. This gives an inclusion $[\cdot]: \kbar\to \okbar/(\pi^m)$ which is compatible with the map $k\to \okey/(\pi^m)$, and a natural inclusion
\[
\kbar \to \oR,\ x\mapsto ([x],[x^{1/p}],[x^{1/p^2}],\cdots).
\]
Then the map $(\cdot)^\sharp$ induces an isomorphism of $\kbar$-algebras
\[
\oR/(\upi^m) \simeq \okbar/(\pi^m).
\]

Consider the lift $(F,\boldsymbol{\iota})$ of $A$. The map $u\mapsto \upi$ and the natural inclusion $k\to \oR$ define an inclusion $F\to \fR$, by which we consider $F$ as a subfield of $\fR$. By our normalization, the valuation $|\cdot|$ of $F$ coincides with the restriction of the valuation $|\cdot|$ of $\fR$ to the subfield $F$. We have a commutative diagram of $k$-algebras
\begin{equation}\label{iotadiagram}
\begin{gathered}
\xymatrix{
\oef \ar[r]\ar[d]_{\boldsymbol{\iota}} & \oR\ar[dd]^{\prjt_0} \\
A\ar[d]_{\iota^{-1}} & \\
\okey/(\pi^m) \ar[r] & \okbar/(\pi^m),
}
\end{gathered}
\end{equation}
since the left vertical composite sends $u$ to $\pi$.

We choose an algebraic closure $\bar{F}$ of $F$ as the algebraic closure of $F$ in $\fR$, and let $F^\sep$ be the separable closure of $F$ in $\fR$. The subfield $F^\sep$ is dense in $\fR$ and the absolute Galois group $G_F=\Gal(F^\sep/F)$ acts naturally on $\fR$. Put $K_\infty=\Ksep\cap (\cup_n K(\pi_n))$ and $G_{K_\infty}=\Gal(\Ksep/K_\infty)$. By the classical theory of fields of norms of Fontaine-Wintenberger (see \cite{W}), the inclusion $F\to \fR$ gives an isomorphism of groups
\[
G_{K_\infty}\simeq G_F
\]
which is compatible with the map $(\cdot)^\sharp: \fR\to \bC$. 

%------------------------------------------------------------------------------------------------

%------------------------------------------------------------------------------------------------

\subsection{A comparison theorem}\label{subseccomp}

In this subsection, we prove the following main theorem of this section.

\begin{thm}\label{comparison}
There exists an isomorphism of finite $G_{K_\infty}$-sets 
\[
\rho_{\bar{f},n}^{K,F}: \pi_0^\geom(X^{j}_K(\bar{f},n))\to \pi_0^\geom(X^{j}_F(\bar{f},n))
\]
via the isomorphism $G_{K_\infty}\simeq G_F$.
\end{thm}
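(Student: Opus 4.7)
The plan is to bridge the characteristics of $K$ and $F$ via Scholze's theory of perfectoid spaces and the tilting equivalence. By Lemma \ref{adrigconn} together with Lemma \ref{connCp}, the theorem reduces to exhibiting a $G_{K_\infty}$-equivariant bijection between the finite sets $\pi_0(X^{j,\ad}_\bC)$ and $\pi_0(X^{j,\ad}_\fR)$, where $X^{j,\ad}_\bC \subset \Spa(\bC\la X\ra,\oc\la X\ra)$ and $X^{j,\ad}_\fR \subset \Spa(\fR\la X\ra,\oR\la X\ra)$ denote the base-changed rational adic subdomains defined by the inequalities $|f_i(x)| \leq |\pi(x)|^{j_i}$ and $|\mathbf{f}_i(x)| \leq |u(x)|^{j_i}$, respectively.

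To cross the characteristic divide, I would pass to the affinoid perfectoid polydiscs $X^\ad_{\bC,\infty} = \Spa(\bC\la X^{1/p^\infty}\ra,\oc\la X^{1/p^\infty}\ra)$ and $X^\ad_{\fR,\infty} = \Spa(\fR\la X^{1/p^\infty}\ra,\oR\la X^{1/p^\infty}\ra)$, which are tilts of each other, and appeal to Scholze's Theorem 6.3 for the homeomorphism $\tau\colon X^\ad_{\bC,\infty} \to X^\ad_{\fR,\infty}$ preserving rational subsets. The key technical input is the congruence $\mathbf{f}_i^\sharp \equiv f_i \bmod \pi^m$ inside $\oc\la X^{1/p^\infty}\ra$, derived from the identity $x^\sharp \bmod \pi^m = \prjt_0(x)$ together with the multiplicativity of $(\cdot)^\sharp$ and the commutative diagram (\ref{iotadiagram}) encoding the compatibility of the lifts $f_i$ and $\mathbf{f}_i$ of the common $\bar{f}_i \in A[X]$. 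Combined with the hypothesis $j \leq m$ and the ultrametric inequality, this identifies, under $\tau$, the pullback of $X^{j,\ad}_\fR$ in $X^\ad_{\fR,\infty}$ with the pullback of $X^{j,\ad}_\bC$ in $X^\ad_{\bC,\infty}$.

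The remaining step --- and the main obstacle --- is to show that the two vertical projections induce bijections between $\pi_0$ of the base rational subdomain and $\pi_0$ of its perfectoid pullback. On the $\fR$-side this is immediate because the $p^l$-th power map on coordinates is a universal homeomorphism in characteristic $p$, so the projection $X^\ad_{\fR,\infty} \to X^\ad_{\fR,0}$ is already a homeomorphism of underlying topological spaces. On the $\bC$-side the projection is a ramified pro-Kummer cover, and at any fixed finite level the pullback may acquire spurious connected components coming from the deck action of the $p$-power roots of unity; the crux is to show that these extra components merge as one ascends the tower, so that $\pi_0$ stabilizes at the perfectoid limit to that of the base. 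I would establish this by combining the finiteness of $\pi_0(X^{j,\ad}_\bC)$ (via the reduced fibre theorem) with a careful study of quasi-compact open subsets of perfectoid polydiscs in the framework of Huber's adic spaces \cite{Hu_C}. Once both descent statements are in hand, composition with $\tau$ yields the required bijection, and the Galois equivariance follows from the Fontaine--Wintenberger isomorphism $G_{K_\infty} \simeq G_F$ together with the naturality of $\tau$ with respect to the embedding $F \hookrightarrow \fR$ sending $u$ to $\upi$.
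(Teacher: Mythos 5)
Your proposal correctly identifies the overall strategy of the paper's proof: reduce to adic-space $\pi_0$ via Lemmas \ref{adrigconn} and \ref{connCp}, pass to the perfectoid polydiscs, invoke Scholze's tilting homeomorphism $\tau$, and use the congruence $\mathbf{f}_i^\sharp \equiv f_i \bmod \pi^m$ to identify the rational subsets. The treatment of the $\fR$-side is also right: in characteristic $p$ the $p^l$-th power map is a universal homeomorphism and there is nothing to prove.

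The genuine gap is precisely where you flag "the main obstacle." You assert that extra components on the $\bC$-side might appear at finite levels and merge at the perfectoid limit, and you propose to resolve this by "a careful study of quasi-compact open subsets." But this is vague and, more importantly, misses the essential arithmetic input. The paper's Lemma \ref{pi0toinf} proves that no spurious components appear even at finite level $l$: given a connected component $C$ of the base rational subdomain, $p_{l,0}^{-1}(C)$ is connected. The mechanism is Lemma \ref{Uconn}: for a classical point $x$ in $C$, the polydisc $U = \{|(X_i - x_i)| \leq |\pi|^m\}$ sits inside the rational subdomain (since $j_i \leq m$), and its preimage under $p_{l,0}$ is again a polydisc $\{|(X_i^{1/p^l} - x_i^{1/p^l})| \leq |\pi|^{m/p^l}\}$, hence connected. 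This identification hinges on the inequality $m \leq e(K)$ (so that $|\pi|^{p^l e/(p^{l-1}(p-1))}|a|^{p^l} \leq |\pi|^{m}$), which controls the valuations of the differences $a\zeta_{p^l}^s - a\zeta_{p^l}^{s'}$ of $p^l$-th roots. Your proposal never uses $m \leq e(K)$, and without it the preimage of a radius-$|\pi|^m$ polydisc under the $p^l$-th power map need not be connected — the deck-transform points could be separated if the ball were small relative to $|\pi|^{e/(p-1)}$. The finiteness of $\pi_0$ and quasi-compactness considerations by themselves cannot supply this.

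There is a secondary gap on Galois equivariance: $\tau$ is not itself $G_{K_\infty}$-equivariant — for $\sigma \in G_{K_\infty}$ and $x \in X^\ad_{\bC,\infty}$ the points $\sigma^*(\tau(x))$ and $\tau(\sigma^*(x))$ generally differ, and one must argue (as in Lemma \ref{tauconn}) that they lie in the same connected component, by exhibiting a common connected rational neighborhood and again using the congruences modulo $\pi^m$. Your claim that equivariance "follows from the Fontaine--Wintenberger isomorphism together with the naturality of $\tau$" therefore needs this supplementary argument.
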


\begin{proof}
Put $X=(X_1,\ldots,X_n)$. Let us consider the rings
\[
\oc[X^{1/p^l}]=\oc[X_1^{1/p^l},\ldots, X_n^{1/p^l}],\ \oc[X^{1/p^\infty}]=\oc[X_1^{1/p^\infty},\ldots, X_n^{1/p^\infty}]
\]
for any non-negative integer $l$ and their $\pi$-adic completions 
\[
\oc\la X^{1/p^l}\ra=\oc[X^{1/p^l}]^\wedge,\ \oc\la X^{1/p^\infty}\ra=\oc[X^{1/p^\infty}]^\wedge.
\]
We also put
\[
\bC\la X^{1/p^l}\ra=\oc\la X^{1/p^l}\ra[1/\pi],\ \bC\la X^{1/p^\infty}\ra=\oc\la X^{1/p^\infty}\ra[1/\pi].
\]
On the side of $F$, we write as
\[
\oR[X^{1/p^l}]=\oR[X_1^{1/p^l},\ldots, X_n^{1/p^l}],\ \oR[X^{1/p^\infty}]=\oR[X_1^{1/p^\infty},\ldots, X_n^{1/p^\infty}]
\]
and their $\upi$-adic completions as
\[
\oR\la X^{1/p^l}\ra=\oR[X^{1/p^l}]^\wedge,\ \oR\la X^{1/p^\infty}\ra=\oR[X^{1/p^\infty}]^\wedge.
\] 
Similarly, we put
\[
\fR\la X^{1/p^l}\ra=\oR\la X^{1/p^l}\ra[1/\upi],\ \fR\la X^{1/p^\infty}\ra=\oR\la X^{1/p^\infty}\ra[1/\upi].
\]
By \cite[Proposition 5.20]{Sch}, the ring $\bC\la X^{1/p^\infty}\ra$ is a perfectoid $\bC$-algebra with ring of power-bounded elements
\[
\bC\la X^{1/p^\infty}\ra^\circ=\oc\la X^{1/p^\infty}\ra
\]
and its tilt given by
\[
\bC\la X^{1/p^\infty}\ra^\flat=\fR\la X^{1/p^\infty}\ra,\ \bC\la X^{1/p^\infty}\ra^{\flat\circ}=\oR\la X^{1/p^\infty}\ra.
\]
Moreover, we also have a continuous multiplicative map
\[
(\cdot)^\sharp:\fR\la X^{1/p^\infty}\ra \to \bC\la X^{1/p^\infty}\ra,\ g\mapsto g^\sharp
\]
which is compatible with $(\cdot)^\sharp:\fR\to \bC$ and
induces an isomorphism
\[
\oR\la X^{1/p^\infty}\ra/(\upi^m) \to \oc \la X^{1/p^\infty}\ra/(\pi^m)
\]
(\cite[Proposition 5.17 and Lemma 6.2]{Sch}). 

\begin{lem}\label{sharp}
The map $(\cdot)^\sharp$ induces the natural isomorphism
\[
\oR\la X^{1/p^\infty}\ra/(\upi^m) \to \oc \la X^{1/p^\infty}\ra/(\pi^m)
\]
defined by $X_i^{1/p^l}\mapsto X_i^{1/p^l}$ over the isomorphism $\prjt_0:\oR/(\upi^m)\to \oc/(\pi^m)$.
\end{lem}
\begin{proof}
We basically follow the notation of \cite[Proposition 5.17]{Sch}. Put $R=\bC\la X^{1/p^\infty}\ra$ and $R'=\fR\la X^{1/p^\infty}\ra$. Let $\sigma: R'^{\circ}\to R^\circ/(\pi^m)$ be the composite of the natural surjection and the isomorphism in the lemma. Consider the localization functor $M\mapsto M^a$ from the category of $\oc$-modules ({\it resp.} $\oR$-modules) to that of almost $\oc$-modules ({\it resp.} almost $\oR$-modules) and its right adjoint $N\mapsto N_*$. The inverse limit ring $\varprojlim_\Phi R^\circ/(\pi^m)$ along the Frobenius homomorphism is endowed with a natural $\oR$-algebra structure. Put $A=R^{\circ a}$ and $A^{\flat'}=(\varprojlim_\Phi R^\circ/(\pi^m))^a$. Then, from the proof of \cite[Proposition 5.17 and 5.20]{Sch}, there exists a unique isomorphism of almost $\oR$-algebras $\Psi: R'^{\circ a}\to A^{\flat'}$ which makes the following diagram commutative.
\[
\xymatrix{
R'^{\circ a} \ar[r]^{\Psi}\ar[rd]_{\sigma^a} & A^{\flat'}\ar[d]^{\prjt_0}\\
 & (R^\circ/(\pi^m))^a
}
\]
The map $g\mapsto g^\sharp\bmod \pi^m$ is the composite of $\Psi_*:R'^{\circ}=(R'^{\circ a})_*\to (A^{\flat'})_*$ and the natural map
\begin{align*}
(A^{\flat'})_*&=\Hom_{\mathcal{O}_{\bC^{\flat}}^a}(\mathcal{O}_{\bC^{\flat}}^a,(\varprojlim_\Phi R^\circ/(\pi^m))^a)\simeq \varprojlim_\Phi \Hom_{\mathcal{O}^a_{\bC}}(\mathcal{O}^a_{\bC}, (R^\circ/(\pi^m))^a)\\
&\overset{\prjt_0}{\to} \Hom_{\mathcal{O}^a_{\bC}}(\mathcal{O}^a_{\bC}, (R^\circ/(\pi^m))^a)=(A/(\pi^m))_*
\end{align*}
whose image is contained in the subring $A_*/(\pi^m)\simeq R^\circ/(\pi^m)$. The above map $(A^{\flat'})_*\to (A/(\pi^m))_*$ is equal to the map
\[
\Hom_{\mathcal{O}_{\bC^{\flat}}}(\mathfrak{m}_{\fR},\varprojlim_\Phi R^\circ/(\pi^m))\to \Hom_{\oc}(\mathfrak{m}_{\bC}, R^\circ/(\pi^m))
\]
defined by $h\mapsto (\varepsilon\mapsto \prjt_0(h(\varepsilon^\flat)))$, where $\varepsilon^\flat$ is any element of $\oR$ which is sent by the zeroth projection $\prjt_0:\oR\to \oc/(\pi^m)$ to the element $\varepsilon\bmod \pi^m$. By this description, we see that for any $g\in R'^{\circ}$, its image $g^\sharp\bmod \pi^m\in (A/(\pi^m))_*$ coincides with the map
\[
\delta\varepsilon \mapsto \prjt_0(\delta^\flat \Psi(\varepsilon^\flat g))=\delta\varepsilon\sigma(g),
\]
where $\delta,\varepsilon \in \mathfrak{m}_{\bC}$. Namely, it is the map $\varepsilon\mapsto \varepsilon\sigma(g)$.
Let $\widehat{\sigma(g)}$ be a lift of $\sigma(g)$ to the ring $R^\circ$.
Then the map $\mathfrak{m}_{\bC}\to R^\circ$ defined by $\varepsilon\mapsto \varepsilon\widehat{\sigma(g)}$ is an element of $A_*$ which is sent to $g^\sharp\bmod \pi^m\in (A/(\pi^m))_*$ by the natural map $A_*\to (A/(\pi^m))_*$. Since the isomorphism
\[
R^\circ \to A_*=\Hom_{\mathcal{O}^a_{\bC}}(\mathcal{O}^a_{\bC}, A)=\Hom_{\oc}(\mathfrak{m}_\bC, R^\circ)
\]
is given by $h \mapsto (\varepsilon \mapsto \varepsilon h)$, we conclude the equality $g^\sharp\bmod \pi^m=\sigma(g)$.
\end{proof}
Then the commutative diagram (\ref{iotadiagram}) and Lemma \ref{sharp} give the following corollary.

\begin{cor}\label{congruence}
The congruence
\[
\mathbf{f}_i^\sharp \equiv f_i\bmod \pi^m
\]
holds in the ring $\oc\la X^{1/p^\infty}\ra$.
\end{cor}

Consider the adic spaces
\begin{align*}
X_{\bC,l}^\ad&=\Spa(\bC\la X^{1/p^l}\ra, \oc\la X^{1/p^l}\ra),\ X_{\bC,\infty}^\ad=\Spa(\bC\la X^{1/p^\infty}\ra, \oc\la X^{1/p^\infty}\ra),\\
X_{\fR,l}^\ad&=\Spa(\fR\la X^{1/p^l}\ra, \oR\la X^{1/p^l}\ra),\ X_{\fR,\infty}^\ad=\Spa(\fR\la X^{1/p^\infty}\ra, \oR\la X^{1/p^\infty}\ra).
\end{align*}
By \cite[Theorem 6.3]{Sch}, there exists a homeomorphism $\tau: X_{\bC,\infty}^\ad \to X_{\fR,\infty}^\ad$ preserving rational subsets of both sides and satisfying $|g(\tau(x))|=|g^\sharp(x)|$ for any $x\in X_{\bC,\infty}^\ad$ and $g\in \fR\la X^{1/p^\infty}\ra$. Here $|\cdot(x)|$ denotes the continuous valuation associated to the point $x$.
We have a diagram
\begin{equation}\label{diagramX}
\begin{gathered}
\xymatrix{
X_{\bC,\infty}^\ad \ar[r]^-{\tau}_-{\sim}\ar[d]_{p_{\infty,l}} & X_{\fR,\infty}^\ad\ar[d]^{p_{\infty,l}^\flat}\\
X_{\bC,l}^\ad \ar[d]_{p_{l,0}} & X_{\fR,l}^\ad\ar[d]^{p_{l,0}^\flat}\\
X_{\bC,0}^\ad & X_{\fR,0}^\ad,
}
\end{gathered}
\end{equation}
where the vertical arrows are the natural projections. 

\begin{lem}\label{phomeo}
The projection $p_{l,l'}:X^{\ad}_{\bC,l}\to X^{\ad}_{\bC,l'}$ is a continuous open surjection for any $l,l'\in \bZ_{\geq 0}\cup \{\infty\}$ satisfying $l\geq l'$. Moreover, if $K$ is of characteristic $p$, then $p_{l,l'}$ is a homeomorphism.
\end{lem}
\begin{proof}
We may assume $l'=0$. Let $l$ be a non-negative integer. The maps
\[
\bC\la X\ra \to \bC\la X^{1/p^l}\ra,\ \oc\la X\ra \to \oc\la X^{1/p^l}\ra
\]
are flat and finitely presented, and also radicial if $\ch(K)=p$. We also see that the integral domain $\oc\la X^{1/p^l}\ra$ is integrally closed. By \cite[Lemma 1.7.9]{Hu_Et}, the continuous map $p_{l,0}$ is open. Furthermore, the map
\[
\Spec(\bC\la X^{1/p^l}\ra)\to \Spec(\bC\la X\ra)
\]
is a surjection, and also a homeomorphism if $\ch(K)=p$. Take $x\in X^\ad_{\bC,0}$. Let $\frp_x$ be the prime ideal of $\bC\la X\ra$ defined by
\[
\frp_x=\{f\in \bC\la X\ra\mid|f(x)|=0\}
\]
and $\kappa(x)$ be its residue field. Let $\frq$ be a prime ideal of $\bC\la X^{1/p^l}\ra$ above the prime ideal $\frp_x$, which is unique if $\ch(K)=p$. Then there exists a valuation on the residue field $\kappa(\frq)$ of $\frq$ whose restriction to $\kappa(x)$ is equivalent to the valuation $|\cdot(x)|$, and it is unique up to equivalence if $\ch(K)=p$, since in the latter case the residue field $\kappa(\frq)$ is a purely inseparable extension of $\kappa(x)$. We can show that this valuation defines a point of $X_{\bC,l}^\ad$ above $x$ and that such a point is unique if $\ch(K)=p$.
Hence the map $p_{l,0}$ is a continuous open surjection, and also a homeomorphism if $\ch(K)=p$.

Next we treat the case of $p_{\infty,0}$. By the equality
\[
\bC[X^{1/p^\infty}]=\varinjlim_l \bC[X^{1/p^l}]
\]
and \cite[Proposition 3.9]{Hu_C}, we have the commutative diagram
\[
\xymatrix{
X^{\ad}_{\bC,\infty}\ar[r]\ar[d] & \Spa(\bC[X^{1/p^\infty}],\oc[X^{1/p^\infty}])\ar[d]\\
X^{\ad}_{\bC,l}\ar[r]\ar[d] & \Spa(\bC[X^{1/p^l}],\oc[X^{1/p^l}])\ar[d]\\
X^{\ad}_{\bC,0}\ar[r] & \Spa(\bC[X],\oc[X])
}
\]
whose horizontal arrows are homeomorphisms preserving rational subsets. Hence, by extending valuations as above, we see that the continuous map $p_{\infty, l}$ is a surjection for any $l$ and any rational subset of $X^\ad_{\bC,\infty}$ is the inverse image of a rational subset of $X^\ad_{\bC,l}$ for some non-negative integer $l$. This proves the first assertion. If $\ch(K)=p$, we also see that the map $p_{\infty, l}$ is a bijection and the second assertion follows.
\end{proof}

Now we put
\begin{align*}
\sB^j_{\bC}(\bar{f},n)&=\sB^j_K(\bar{f},n)\hat{\otimes}_K \bC,\ X^{j,\ad}_{\bC}(\bar{f},n)=\Spa(\sB^j_{\bC}(\bar{f},n), \sB^j_{\bC}(\bar{f},n)^\circ),\\
\sB^j_{\fR}(\bar{f},n)&=\sB^j_F(\bar{f},n)\hat{\otimes}_F \fR,\ X^{j,\ad}_{\fR}(\bar{f},n)=\Spa(\sB^j_{\fR}(\bar{f},n), \sB^j_{\fR}(\bar{f},n)^\circ).
\end{align*}
Then we have the equalities
\begin{align*}
X^{j,\ad}_{\bC}(\bar{f},n)&=\{x \in X^\ad_{\bC,0}\mid |f_i(x)|\leq |\pi(x)|^{j_i}\text{ for any }i\},\\
X^{j,\ad}_{\fR}(\bar{f},n)&=\{x \in X^\ad_{\fR,0}\mid |\mathbf{f}_i(x)|\leq |u(x)|^{j_i}\text{ for any }i\},
\end{align*}
where $f_i$ and $\mathbf{f}_i$ are the lifts of $\bar{f}_i$ as before. By Lemma \ref{connCp}, we have natural bijections
\begin{align*}
\pi_0(X^{j,\ad}_{\bC}(\bar{f},n))&\overset{\sim}{\to}  \pi_0^\geom(X^{j}_K(\bar{f},n)),\\
\pi_0(X^{j,\ad}_{\fR}(\bar{f},n))&\overset{\sim}{\to}  \pi_0^\geom(X^{j}_F(\bar{f},n))
\end{align*}
which are compatible with the natural Galois action. Hence we are reduced to constructing a natural isomorphism of $G_{K_\infty}$-sets
\[
\pi_0(X^{j,\ad}_{\bC}(\bar{f},n))\to \pi_0(X^{j,\ad}_{\fR}(\bar{f},n)).
\]

For any $l\in \bZ_{\geq 0}\cup\{\infty\}$, set $X^{j,\ad}_{\bC,l}(\bar{f},n)$ to be the inverse image of the rational subset $X^{j,\ad}_{\bC}(\bar{f},n)\subseteq X^\ad_{\bC,0}$ by the natural projection $p_{l,0}: X^\ad_{\bC,l}\to X^\ad_{\bC,0}$. This is the rational subset of $X^\ad_{\bC,l}$ defined by
\[
\{x\in X^\ad_{\bC,l}\mid |f_i(x)|\leq |\pi(x)|^{j_i}\text{ for any }i\}.
\]

\begin{lem}\label{KEY}
The rational subset $X_{\bC,\infty}^{j,\ad}(\bar{f},n)$ of $X^\ad_{\bC,\infty}$ is the inverse image of the rational subset $X_{\fR,\infty}^{j,\ad}(\bar{f},n)$ of $X^\ad_{\fR,\infty}$ by the homeomorphism $\tau$.
\end{lem}
\begin{proof}
By the relation $|g(\tau(x))|=|g^\sharp(x)|$, the inverse image in the lemma is the rational subset
\[
\{ x\in X^\ad_{\bC,\infty}\mid |\mathbf{f}_i^\sharp(x)|\leq |u^\sharp(x)|^{j_i}\text{ for any }i\}
\]
of the adic space $X^\ad_{\bC,\infty}$. Note the equality $u^\sharp=\pi$. By Corollary \ref{congruence} and the assumption $j_i\leq m$, we obtain the equivalence
\[
|\mathbf{f}_i^\sharp(x)|\leq |\pi(x)|^{j_i} \Leftrightarrow |f_i(x)|\leq |\pi(x)|^{j_i}
\]
and the lemma follows.
\end{proof}

Therefore, the diagram (\ref{diagramX}) induces a diagram
\begin{equation}\label{diagramj}
\begin{gathered}
\xymatrix{
X_{\bC,\infty}^{j,\ad}(\bar{f},n) \ar[r]^-{\tau}_-{\sim}\ar[d]_{p_{\infty,l}} & X_{\fR,\infty}^{j,\ad}(\bar{f},n)\ar[d]^{p_{\infty,l}^\flat}\\
X_{\bC,l}^{j,\ad}(\bar{f},n) \ar[d]_{p_{l,0}} & X_{\fR,l}^{j,\ad}(\bar{f},n)\ar[d]^{p_{l,0}^\flat}\\
X_{\bC}^{j,\ad}(\bar{f},n) & X_{\fR}^{j,\ad}(\bar{f},n),
}
\end{gathered}
\end{equation}
where $\tau$ is a homeomorphism.

\begin{lem}\label{pi0toinf}
The natural projections induce isomorphisms
\begin{align*}
\pi_0(X^{j,\ad}_{\bC,\infty}(\bar{f},n))&\to \pi_0(X^{j,\ad}_{\bC,l}(\bar{f},n))\to \pi_0(X^{j,\ad}_{\bC}(\bar{f},n)),\\
\pi_0(X^{j,\ad}_{\fR,\infty}(\bar{f},n))&\to \pi_0(X^{j,\ad}_{\fR,l}(\bar{f},n)) \to \pi_0(X^{j,\ad}_{\fR}(\bar{f},n))
\end{align*}
of $G_K$-sets ({\it resp.} $G_F$-sets) for any $l$. 
\end{lem}
\begin{proof}
We may only consider the case over $\bC$. Since the projections are continuous and compatible with the natural $G_K$-action, the maps are well-defined. It is enough to show the bijectivity. If $K$ is of characteristic $p$, then this follows from Lemma \ref{phomeo}. 

Suppose that $K$ is of mixed characteristic. Note that the ring $\sB^j_{\bC}(\bar{f},n)$ is Noetherian and \cite[Theorem 2.2]{Hu_Fo} (or Lemma \ref{adrigconn}) implies that the number of connected components of $X^{j,\ad}_{\bC}(\bar{f},n)$ is finite. Moreover, each of its connected components is a rational subset. Since $p_{l,0}$ is a surjection for any $l\in \bZ_{\geq 0}\cup\{\infty\}$, it suffices to show that, for any connected component $C$ of $X^{j,\ad}_{\bC}(\bar{f},n)$, the inverse image $p_{l,0}^{-1}(C)$ is connected.

Suppose that we have a decomposition $p_{l,0}^{-1}(C)=V_1\coprod V_2$ into the disjoint union of non-trivial open subsets. Since $p_{l,0}^{-1}(C)$ is also a rational subset, the open subsets $V_i$ are quasi-compact and thus are finite unions of rational subsets. For the case of $l=\infty$, this implies that the open subsets $V_i$ are the inverse images of some open subsets of $X^{j,\ad}_{\bC,l'}(\bar{f},n)$ for a sufficiently large non-negative integer $l'$. Since the projection $p_{\infty,l'}$ is a surjection, this shows that we may assume $l\in \bZ_{\geq 0}$.

Let $l$ be a non-negative integer. Since the map $p_{l,0}$ is a continuous open surjection, the images $p_{l,0}(V_i)$ are non-trivial quasi-compact open subsets covering the connected component $C$ and thus they would meet each other. By \cite[Corollary 4.2]{Hu_C}, the intersection of these images has a point defined by the map $X_i\mapsto x_i$ with some $x_i\in \oc$. Thus we reduce ourselves to showing that, for any such classical point $x=(x_1,\ldots,x_n)\in X^{j,\ad}_{\bC}(\bar{f},n)$, any two points $y,y'\in p^{-1}_{l,0}(x)$ are contained in the same connected component of $X_{\bC,l}^{j,\ad}(\bar{f},n)$.

Consider the rational subset
\begin{align*}
U&=X^{\ad}_{\bC,0}(\frac{X_1-x_1,\ldots, X_n-x_n}{\pi^m})\\
&=\{z\in X^{\ad}_{\bC,0}\mid |(X_i-x_i)(z)|\leq |\pi(z)|^m\text{ for any }i\}
\end{align*}
of $X^{\ad}_{\bC,0}$ containing $x$. Since $x$ satisfies the inequality
\[
| f_i(x)|\leq |\pi|^{j_i}
\]
for any $i$, our assumption $j_i\leq m$ implies that any point $z\in U$ also satisfies the inequality and $U$ is contained in $X_{\bC}^{j,\ad}(\bar{f},n)$. Then the inverse image $p_{l,0}^{-1}(U)$ is the rational subset 
\[
\{z\in X^{\ad}_{\bC,l}\mid |(X_i-x_i)(z)|\leq |\pi(z)|^m\text{ for any }i\}
\]
of $X^{\ad}_{\bC,l}$ containing $y$ and $y'$ which is contained in $X_{\bC,l}^{j,\ad}(\bar{f},n)$.

\begin{lem}\label{Uconn}
For any $z\in X^{\ad}_{\bC,l}$, any $a\in \oc$ and any positive rational number $j'$ satisfying 
\[
|\pi|^{p^l e/(p^{l-1}(p-1))}|a|^{p^l} \leq |\pi|^{j'}, 
\]
we have the equivalence
\[
|(X_i-a^{p^l})(z)|\leq |\pi(z)|^{j'} \Leftrightarrow |(X_i^{1/p^l}-a)(z)|\leq |\pi(z)|^{j'/p^l}.
\]
\end{lem}
\begin{proof}
We let $\zeta_{p^l}$ denote a primitive $p^l$-th root of unity in $\bC$. Then we have 
\[
|(X_i-a^{p^l})(z)|=\prod_{s=0}^{p^l-1} |(X_i^{1/p^l}-a\zeta_{p^l}^s)(z)|.
\]
Suppose that the inequality 
\[
|(X_i^{1/p^l}-a\zeta_{p^l}^s)(z)|\leq |\pi(z)|^{j'/p^l}
\]
holds for some $s$. Then the assumption on $j'$ implies
\[
|(X_i^{1/p^l}-a\zeta_{p^l}^{s'})(z)|=|(X_i^{1/p^l}-a\zeta_{p^l}^{s}+a(\zeta_{p^l}^{s}-\zeta_{p^l}^{s'}))(z)|\leq |\pi(z)|^{j'/p^l}
\]
for any other $s'$. This shows the implication of one direction. Conversely, if $|(X_i-a^{p^l})(z)|\leq |\pi(z)|^{j'}$, then
\[
\min_s |(X_i^{1/p^l}-a\zeta_{p^l}^s)(z)|\leq |\pi(z)|^{j'/p^l}
\]
and the other direction also follows from the above claim.
\end{proof}

Since $m\leq e$, Lemma \ref{Uconn} shows that $p_{l,0}^{-1}(U)$ is equal to the rational subset
\[
\{z\in X^{\ad}_{\bC,l}\mid |(X_i^{1/p^l}-x_i^{1/p^l})(z)|\leq |\pi(z)|^{m/p^l}\text{ for any }i\},
\]
which is a polydisc and thus it is connected by \cite[Theorem 2.2]{Hu_Fo} (or Lemma \ref{adrigconn}). Hence the two points $y,y'$ are contained in the same connected component of $X_{\bC,l}^{j,\ad}(\bar{f},n)$.
\end{proof}

\begin{lem}\label{tauconn}
The homeomorphism $\tau$ induces an isomorphism of finite $G_{K_\infty}$-sets
\[
\pi_0(X^{j,\ad}_{\bC,\infty}(\bar{f},n)) \to \pi_0(X^{j,\ad}_{\fR,\infty}(\bar{f},n))
\]
via the isomorphism $G_{K_\infty}\simeq G_F$.
\end{lem}
\begin{proof}
By Lemma \ref{KEY}, the homeomorphism $\tau$ induces a bijection of the sets in the lemma. It is enough to show that this is compatible with the $G_{K_\infty}$-action. Note that the action $\sigma^*$ of any element $\sigma\in G_{K_\infty}$ on the adic space $X^{j,\ad}_{\bC,\infty}(\bar{f},n)$ is defined by the action of $\sigma^{-1}$ on the coefficients of the ring $\bC\la X^{1/p^\infty} \ra$ and similarly for $X^{j,\ad}_{\fR,\infty}(\bar{f},n)$. Every connected component $C$ of the adic space $X^{j,\ad}_{\bC,\infty}(\bar{f},n)$ is a rational subset which is the inverse image of a rational subset of $X^{\ad}_{\bC,0}$, as is shown in the proof of Lemma \ref{pi0toinf}. Thus $C$ contains a point $x$ defined by the map
\[
\bC\la X^{1/p^\infty} \ra \to \bC,\ X_i^{1/p^l}\mapsto x_{i,l}
\]
with some $x_{i,l}\in \oc$ satisfying $x_{i,l+1}^p=x_{i,l}$ for any $l$. It suffices to show for this $x$ that for any $\sigma\in G_{K_\infty}$, the points $\sigma^*(\tau(x))$ and $\tau(\sigma^*(x))$ are contained in the same connected component of $X^{j,\ad}_{\fR,\infty}(\bar{f},n)$. Note that we have
\begin{align*}
|g(\sigma^*(\tau(x)))|&=|\sigma^{-1}(g)(\tau(x))|=|(\sigma^{-1}(g))^\sharp(x)|,\\
|g(\tau(\sigma^*(x))|&=|g^\sharp(\sigma^*(x))|=|\sigma^{-1}(g^\sharp)(x)|
\end{align*}
for any $g\in \fR\la X^{1/p^\infty} \ra$.

The system $(x_{i,l})_{l\in \bZ_{\geq 0}}$ defines an element $\ux_i\in \oR$ for any $i$. Put $\ux=(\ux_1,\ldots, \ux_n)$ and $\ux^\sharp=(\ux_1^\sharp,\ldots, \ux_n^\sharp)$. By the definition of the map $(\cdot)^\sharp$, we have $\ux^\sharp=(x_{1,0},\ldots,x_{n,0})$. Consider the rational subset
\[
U=\{ z\in X^{\ad}_{\fR,\infty}\mid |(X_i-\sigma(\ux_i))(z)|\leq |u(z)|^{m}\text{ for any }i\}
\]
of the adic space $X^{\ad}_{\fR,\infty}$. This is the inverse image of a polydisc in $X^{\ad}_{\fR,0}$ by the projection $p_{\infty,0}^\flat$ and thus connected by Lemma \ref{phomeo}. By Lemma \ref{sharp}, we have congruences
\[
(\sigma^{-1}(X_i-\sigma(\ux_i)))^\sharp\equiv\sigma^{-1}(X_i-\sigma(x_{i,0}))\equiv \sigma^{-1}((X_i-\sigma(\ux_i))^\sharp)\bmod \pi^m
\]
in the ring $\oc\la X^{1/p^\infty}\ra$. Hence we have equivalences
\begin{align*}
&|(\sigma^{-1}(X_i-\sigma(\ux_i)))^\sharp(x)|\leq |(\sigma^{-1}(u))^\sharp(x)|^m \\
&\Leftrightarrow |(X_i-x_{i,0})(x)|\leq |\pi(x)|^m\\
&\Leftrightarrow |\sigma^{-1}((X_i-\sigma(\ux_i))^\sharp)(x)|\leq |\sigma^{-1}(u^\sharp)(x)|^m,
\end{align*}
which implies that the points $\sigma^*(\tau(x))$ and $\tau(\sigma^*(x))$ lie in $U$. 

On the other hand, let $\ux_i^{1/p^l}$ be the unique $p^l$-th root of $\ux_i$ in the perfect integral domain $\oR$. Then the map $X_i^{1/p^l}\mapsto \ux_i^{1/p^l}$ defines a point of the adic space $X^{\ad}_{\fR,\infty}$, which is also denoted by $\ux$. The commutative diagram (\ref{iotadiagram}) yields the congruence
\[
(\mathbf{f}_i(\ux))^\sharp \equiv f_i(x_{1,0},\ldots, x_{n,0})\bmod \pi^m
\]
in the ring $\oc$. Since $x\in X^{j,\ad}_{\bC,\infty}(\bar{f},n)$, we have the inequality
\[
|f_i(x_{1,0},\ldots,x_{n,0})|\leq |\pi|^{j_i}\text{ for any }i
\]
and the above congruence implies that the points $\ux$ and $\sigma^*(\ux)$ are contained in the rational subset $X^{j,\ad}_{\fR,\infty}(\bar{f},n)$. Note that the latter point is defined by the map $X_i^{1/p^l}\mapsto \sigma(\ux_i^{1/p^l})$. Thus we see that $U$ is contained in $X^{j,\ad}_{\fR,\infty}(\bar{f},n)$. This shows that the points $\sigma^*(\tau(x))$ and $\tau(\sigma^*(x))$ are contained in the same connected component of $X^{j,\ad}_{\fR,\infty}(\bar{f},n)$ and the lemma follows.
\end{proof}

By Lemma \ref{pi0toinf} and Lemma \ref{tauconn}, we have a diagram of bijections
\begin{equation}\label{diagramjpi0}
\begin{gathered}
\xymatrix{
\pi_0(X_{\bC,\infty}^{j,\ad}(\bar{f},n)) \ar[r]^-{\tau}_-{\sim}\ar[d]_{p_{\infty,0}}^{\wr} & \pi_0(X_{\fR,\infty}^{j,\ad}(\bar{f},n))\ar[d]^{p_{\infty,0}^\flat}_{\wr}\\
\pi_0(X_{\bC}^{j,\ad}(\bar{f},n)) & \pi_0(X_{\fR}^{j,\ad}(\bar{f},n)),
}
\end{gathered}
\end{equation}
where all arrows are compatible with the natural Galois action. This concludes the proof of Theorem \ref{comparison}.
\end{proof}

\begin{rmk}
The isomorphism $\rho_{\bar{f},n}^{K,F}$ of Theorem \ref{comparison} depends on the choices of a uniformizer of $A$, an algebraic closure $\Kbar$, a uniformizer $\pi$ and a system of its $p$-power roots $(\pi_l)_{l\in\bZ_{\geq 0}}$.
\end{rmk}

%------------------------------------------------------------------------------------------------

%------------------------------------------------------------------------------------------------

\subsection{The case of imperfect residue field}\label{subsecimperf}

Now we return to the situation of Subsection \ref{subsectub}. Namely, we consider a truncated discrete valuation ring $A$ of length $m$ with uniformizer $\bar{\pi}$ and residue field $k$, which may be imperfect. We also assume $pA=0$. We fix a $k$-algebra structure $k\to A$ which gives a section of the reduction map $A\to k$. Note that we can always find such a map by \cite[Th\'{e}or\`{e}me (19.6.1)]{EGA4-1}, since the extension $k/\bF_p$ is separable. Let $(K,\iota)$, $(F,\boldsymbol{\iota})$, $\bar{f}$, $f$ and $\mathbf{f}$ be as before.

We fix a Cohen ring $C(k)$ of $k$. Using \cite[Th\'{e}or\`{e}me (19.8.6) (i)]{EGA4-1}, we also fix a local homomorphism $C(k)\to \okey$ which makes the following diagram commutative.
\[
\xymatrix{
C(k)\ar[r]\ar[d] & \okey\ar[d]\\
k\ar[r] & A
}
\]

Suppose that $K$ is of characteristic zero. Then this local homomorphism is an injection and the $C(k)$-algebra $\okey$ is finite. By this fixed map, we consider $K_0=\Frac(C(k))$ as a subfield of $K$. The extension $K/K_0$ is a finite totally ramified extension.  

We fix a $p$-basis $\{\bar{b}_\lambda\}_{\lambda\in\Lambda}$ of $k$ and its lift $\{b_\lambda\}_{\lambda\in\Lambda}$ in $C(k)$. We also fix a system of $p$-power roots $(b_{\lambda,l})_{l\geq 0}$ of $b_{\lambda}$ in $\okbar$ satisfying $b_{\lambda,0}=b_\lambda$ and $b_{\lambda,l+1}^p=b_{\lambda,l}$. Let $K'_0$ be the completion of the discrete valuation field $\cup_{\lambda,l}K_0(b_{\lambda,l})$, which is naturally considered as a subfield of $\bC$. Then the extension $K_0'/K_0$ is of relative ramification index one and the residue field $k'$ of $K'_0$ is the perfect closure of $k$ in $\kbar$. Put $K'=K'_0K$, the composite field in $\bC$. This is a finite extension of $K'_0$, and $K'/K$ is an extension of complete discrete valuation fields of relative ramification index one satisfying $\cO_{K'}=\okey\otimes_{C(k)}\cO_{K'_0}$.

Next suppose that $K$ is of characteristic $p$. Then the map $C(k)\to \okey$ factors through $k$ and gives a $k$-algebra structure of $\okey$. We have an isomorphism of $k$-algebras $k[[u]]\to \okey$ sending $u$ to $\pi$. Let $k'$ be the perfect closure of $k$ in $\Kbar$ and $K'$ be the completion of the composite field $k'K$ in $\Kbar$. Then the field $K'$ is naturally isomorphic to $k'((u))$, and it is naturally considered as a subfield of $\bC$. Moreover, $K'/K$ is an extension of complete discrete valuation fields of relative ramification index one.

In both cases, let $K'^\sep$ be the separable closure of $K'$ in $\bC$ and put $K'_\infty=K'^\sep\cap (\cup_n K'(\pi_n))$ as before.

\begin{lem}\label{primaryext}
\begin{enumerate}
\item\label{primaryext-dense} The subfield $K'^\sep$ is dense in $\bC$.
\item\label{primaryext-Gal}
The natural map
\[
\Gal(K'^\sep/K')\to \Gal(\Ksep/\Ksep\cap K')
\]
is an isomorphism.
\item\label{primaryext-prim} If $\ch(K)=p$, then the extension $K'/K$ is primary. In particular, the map in (\ref{primaryext-Gal}) induces an isomorphism
\[
\Gal(K'^\sep/K')\simeq \Gal(\Ksep/K).
\]
\end{enumerate}
\end{lem}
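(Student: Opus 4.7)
My plan is to derive all three parts from Krasner's lemma applied to $K'$ viewed as the completion of a smaller field $L$, combined with standard Galois-theoretic facts about separable vs.\ purely inseparable extensions. Set $L = (\bigcup_{\lambda, l} K_0(b_{\lambda, l})) \cdot K$ in the mixed-characteristic case and $L = k' K$ in the equal-characteristic case, so that $K'$ is the completion of $L$ inside $\bC$.

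For (i), every $\alpha \in \Ksep$ is separable over the larger field $K'$, so $\Ksep \subseteq K'^\sep$, and it suffices to prove that $\Ksep$ is dense in $\bC$. In mixed characteristic this is immediate since $\Ksep = \Kbar$ and $\bC$ is the completion of $\Kbar$. In equal characteristic $p$, for any $\alpha \in \Kbar$ with minimal polynomial $f \in K[T]$, the polynomial $f_\varepsilon(T) := f(T) + \varepsilon T$ has nonzero derivative (hence is separable) for every nonzero $\varepsilon \in K$, and its roots converge to those of $f$ in $\bC$ as $\varepsilon \to 0$; thus every $\alpha \in \Kbar$ is a limit of separable elements, and $\Ksep$ is dense.

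For (ii), I first show $K'^\sep = K' \cdot \Ksep$ inside $\bC$, after which the lemma follows from the standard isomorphism $\Gal(K' \cdot \Ksep / K') \simeq \Gal(\Ksep / \Ksep \cap K')$ given by restriction to $\Ksep$. To establish $K'^\sep = K' \cdot \Ksep$, apply Krasner's lemma to the complete field $K' = \widehat{L}$: any finite separable extension of $K'$ is generated by a root of a separable $f \in K'[T]$, and approximating the coefficients of $f$ by elements of $L$ produces a separable $\tilde f \in L[T]$ whose roots (lying in $L^\sep \subseteq \bC$) generate the same extension of $K'$, so $K'^\sep = K' \cdot L^\sep$. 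It remains to identify $L^\sep$: in mixed characteristic each $b_{\lambda, l}$ is separable over $K_0$ since $\ch(K_0) = 0$, hence $L \subseteq \Ksep$ and $L^\sep = \Ksep$; in equal characteristic $L/K$ is purely inseparable, and every $K$-automorphism $\sigma$ of $\Kbar$ fixes $L$ pointwise because $(\sigma(\alpha) - \alpha)^{p^n} = \sigma(\alpha^{p^n}) - \alpha^{p^n} = 0$ whenever $\alpha^{p^n} \in K$, which makes restriction $\Gal(L^\sep/L) \to \Gal(\Ksep/K)$ an isomorphism, forcing $[L^\sep : L] = [\Ksep : K] = [L \cdot \Ksep : L]$ and hence $L^\sep = L \cdot \Ksep$.

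For (iii), I argue directly that $\Ksep \cap K' = K$ in equal characteristic: any $\alpha \in \Ksep \cap K'$ is the limit of a sequence $\alpha_n$ in $k' K$, and Krasner's lemma applied to the separable $\alpha$ yields $K(\alpha) \subseteq K(\alpha_n) \subseteq k' K$ for $n$ large; since $K(\alpha)/K$ is separable while $k' K/K$ is purely inseparable, this forces $K(\alpha) = K$ and $\alpha \in K$. The ``in particular'' assertion then follows at once from (ii). The main technical obstacle is ensuring that the Krasner approximation in (ii) truly produces $K'^\sep = K' \cdot L^\sep$ when $L$ is infinite-dimensional over $K$ with transcendental completion; this is handled one finite extension at a time, but needs some care regarding the topology on $K'$.
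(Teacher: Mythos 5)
Your overall strategy --- Krasner's lemma applied to $K'$ as the completion of a dense subfield $L$, combined with standard Galois theory of composita --- is sound and shares its core mechanism with the paper's proof. Parts (i) and (iii) are correct; your density argument via the perturbation $f_\varepsilon = f + \varepsilon T$ is a nice way to make explicit the standard fact that $\Ksep$ is dense in $\bC$, and your Krasner-plus-density argument for (iii) is a genuine alternative to the paper's route (the paper instead shows directly that $k'K$ is Henselian and that a finite separable extension of $k'K$ inside $K'$ has ramification index and residue degree one, hence is trivial). Both arguments establish $\Ksep \cap K' = K$, which is what ``primary'' amounts to here.

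There is one genuine gap in part (ii), in the equal-characteristic case. After correctly establishing that restriction gives an isomorphism $\Gal(L^\sep/L) \to \Gal(\Ksep/K)$, you conclude $L^\sep = L\cdot\Ksep$ from the equality of degrees $[L^\sep:L] = [\Ksep:K] = [L\cdot\Ksep:L]$. But these are infinite cardinals, and an inclusion $L\cdot\Ksep \subseteq L^\sep$ of infinite algebraic extensions with the same cardinal degree over $L$ can be strict, so the degree comparison does not close the argument. The repair is to use the Galois correspondence instead of cardinality: you already have that restriction $\Gal(L^\sep/L) \to \Gal(\Ksep/K)$ is injective, and it factors through the restriction $\Gal(L^\sep/L) \to \Gal(L\Ksep/L)$ (which is itself surjective with kernel $\Gal(L^\sep/L\Ksep)$). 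Injectivity of the composite forces $\Gal(L^\sep/L\Ksep) = \{1\}$, and since $L^\sep/L\Ksep$ is Galois (normal and separable), this gives $L^\sep = L\Ksep$. With this correction your route to (ii) is valid, and it is organized somewhat differently from the paper's: you reduce both characteristics to the single identity $K'^\sep = K'\Ksep$ and invoke the compositum isomorphism once, whereas in equal characteristic the paper argues injectivity by hand (taking $p^l$-th powers of minimal polynomials over $k'K$ to land in $K[X]$) and surjectivity via finite Galois subextensions of $\Ksep/(\Ksep\cap K')$.
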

\begin{proof}
Note that $K^\sep$ is a dense subfield of $\bC$. First suppose $\ch(K)=0$. Then Krasner's lemma implies $K'^\sep=K^\sep K'$ and the assertion (\ref{primaryext-dense}) follows. This equality also shows the assertion (\ref{primaryext-Gal}).

Next suppose $\ch(K)=p$. Let $(k'K)^\sep$ be the separable closure of $k'K$ in $\bC$. Krasner's lemma shows $K'^\sep=(k'K)^\sep K'\supseteq K^\sep$ and the assertion (\ref{primaryext-dense}) follows. Let $\sigma$ be an element of $\Gal(K'^\sep/K')$ satisfying $\sigma|_{K^\sep}=\id$. Take $x\in (k'K)^\sep$. Let $g(X)$ be its minimal polynomial over $k'K$ and write it as
\[
g(X)=X^N+a_1 X^{N-1}+\cdots+a_{N-1} X+a_N
\]
with some $a_i\in k'K$. Then there exists a non-negative integer $l$ satisfying $a_i^{p^l}\in K$ for any $i$. Thus $g(X)^{p^l}\in K[X]$ and $x^{p^l}\in K^\sep$. Hence we obtain $\sigma(x)=x$ and the map in the assertion (\ref{primaryext-Gal}) is an injection. Let $L$ be a finite Galois extension of $K^\sep\cap K'$ in $\Ksep$. Then we have $L\cap K'=K^\sep \cap K'$. This implies the isomorphism
\[
\Gal(L K'/K')\simeq \Gal(L/K^\sep \cap K')
\]
and the map in the assertion (\ref{primaryext-Gal}) is also a surjection.

Finally, we show that the extension $K'/K$ is primary. Since the algebraic extension $k'K/K$ is purely inseparable, it is enough to show that any finite separable extension $L/k'K$ in $K'$ coincides with $k'K$. Since the discrete valuation field $k'K$ is the union of finite extensions of $K$, it is Henselian. This implies that the valuation of $k'K$ uniquely extends to $L$, and thus the extended valuation is equal to the restriction of the valuation of $K'$. Since the relative ramification index and the residue degree of $L/k'K$ are both equal to one, we obtain $L=k'K$.
\end{proof}

Using the isomorphism of Lemma \ref{primaryext} (\ref{primaryext-Gal}), we consider the absolute Galois group $G_{K'}=\Gal(K'^\sep/K')$ as a subgroup of $G_K$.

We consider the $k$-algebra $A$ as a $C(k)$-algebra by the composite $C(k)\to k\to A$. Put $A'=A\otimes_k k'$. This ring can be also written as $A'=A\otimes_{\okey}\cO_{K'}$.
Indeed, this follows from the equality $A'=A\otimes_{C(k)}\cO_{K'_0}$ for $\ch(K)=0$ and 
\[
\cO_{k'K}/(\pi^m)=\varinjlim_{l/k}(\cO_{l K}/(\pi^m))=\varinjlim_{l/k}(\cO_{K}/(\pi^m)\otimes_k l)=\okey/(\pi^m)\otimes_k k'
\]
for $\ch(K)=p$, where the limit runs over the category of finite extensions inside $k'/k$. The map $\iota$ induces an isomorphism $\cO_{K'}/(\pi^m)\to A'$. Thus $A'$ is a truncated discrete valuation ring of length $m$ with perfect residue field $k'$ endowed with the induced map $k'\to A'$ giving a section of the reduction map, and also with the induced lift $\iota': \cO_{K'}\to A'$. Put $F'=k'((u))$. The field $F$ is considered as a subfield of $F'$ by the map $u\mapsto u$ and the natural inclusion $k\to k'$. Then the lift $\boldsymbol{\iota}$ also induces a lift $\boldsymbol{\iota}':\cO_{F'}=k'[[u]]\to A'$.
Hence we obtain the cartesian diagram
\[
\xymatrix{
k'[[u]] \ar[r]^{\boldsymbol{\iota}'} & A' & \cO_{K'} \ar[l]_{\iota'} \\
k[[u]]\ar[u] \ar[r]_{\boldsymbol{\iota}} & A\ar[u] & \cO_K\ar[l]^{\iota}. \ar[u]
}
\]
Let $\bar{f}'$ be the image of $\bar{f}$ by the map $A\to A'$. Similarly, let $f'$ and $\mathbf{f}'$ be the images of $f$ and $\mathbf{f}$ by the maps $\okey\to \cO_{K'}$ and $k[[u]]\to k'[[u]]$, respectively. We have the equality
\[
\iota'(f')=\bar{f}'=\boldsymbol{\iota}'(\mathbf{f}').
\]
Thus the sets of polynomials $\bar{f}'$, $f'$ and $\mathbf{f}'$ are also in the situation of Subsection \ref{subsectub}, for the truncated discrete valuation ring $A'$ with perfect residue field $k'$. Note that the extensions $K'/K$ and $F'/F$ are of relative ramification index one, and that formation of $j$-th tubular neighborhoods is compatible with the base change by any extension of relative ramification index one. Applying Lemma \ref{connCp}, we obtain natural bijections
\begin{align*}
\pi_0^\geom(X_K^j(\bar{f},n)) &\to \pi_0^\geom(X_{K'}^j(\bar{f}',n)),\\
\pi_0^\geom(X_F^j(\bar{f},n)) &\to \pi_0^\geom(X_{F'}^j(\bar{f}',n))
\end{align*}
which are compatible with the $G_{K'}$-action and the $G_{F'}$-action, respectively. Hence Theorem \ref{comparison} implies the following theorem.

\begin{thm}\label{comparisonimperf}
There exists an isomorphism of finite $G_{K'_\infty}$-sets
\[
\rho_{\bar{f},n}^{K,F}: \pi_0^\geom(X_K^j(\bar{f},n))\to \pi_0^\geom(X_F^j(\bar{f},n))
\]
via the isomorphism $G_{K'_\infty}\simeq G_{F'}$ of the classical theory of fields of norms which fits into the commutative diagram
\[
\xymatrix{
\pi_0^\geom(X_K^j(\bar{f},n))\ar[r]^{\rho^{K,F}_{\bar{f},n}}\ar[d]_{\wr} &\pi_0^\geom(X_F^j(\bar{f},n))\ar[d]^{\wr} \\
\pi_0^\geom(X_{K'}^j(\bar{f}',n))\ar[r]_{\rho^{K',F'}_{\bar{f}',n}} &\pi_0^\geom(X_{F'}^j(\bar{f}',n)),
}
\]
where $\rho_{\bar{f}',n}^{K',F'}$ is the isomorphism of Theorem \ref{comparison}.
\end{thm}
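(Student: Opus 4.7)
The plan is to reduce directly to the perfect-residue-field case covered by Theorem \ref{comparison} via a base change along the extensions $K'/K$ and $F'/F$, which have relative ramification index one and whose truncated quotients recover the data over $A$. First, using the cartesian diagram relating the four rings $k[[u]], k'[[u]], \okey, \cO_{K'}$ together with $A$ and $A'$, I would choose the lifts coherently: take $f'\in\cO_{K'}[X]$ (resp.\ $\mathbf{f}'\in k'[[u]][X]$) to be the image of $f$ (resp.\ $\mathbf{f}$). Then the natural identifications
\[
\sB^j_{K'}(\bar{f}',n)\simeq \sB^j_K(\bar{f},n)\hat{\otimes}_K K',\qquad \sB^j_{F'}(\bar{f}',n)\simeq \sB^j_F(\bar{f},n)\hat{\otimes}_F F'
\]
express the fact that the formation of $j$-th tubular neighborhoods is compatible with base change along extensions of relative ramification index one.

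Next, since $K'/K$ and $F'/F$ both have relative ramification index one and $\bC$ (resp.\ $\fR$) serves as a completed algebraic closure for both $K$ and $K'$ (resp.\ $F$ and $F'$), applying Lemma \ref{connCp} to each of the four affinoids identifies each set of geometric connected components with $\pi_0$ of the \emph{same} $\bC$-adic space (resp.\ $\fR$-adic space). This produces the two vertical bijections of the target diagram. They are $G_{K'_\infty}$-equivariant (resp.\ $G_{F'}$-equivariant), because both sides inherit the Galois action from the action on the common geometric base $\bC$ (resp.\ $\fR$); here I would use Lemma \ref{primaryext}(\ref{primaryext-Gal}) to view $G_{K'}$ as an open subgroup of $G_K$, together with the density assertion Lemma \ref{primaryext}(\ref{primaryext-dense}) to ensure that the inverse systems defining the geometric $\pi_0$ over $K$ and over $K'$ stabilize to the same value.

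With these bijections in hand, I would define $\rho^{K,F}_{\bar{f},n}$ as the unique map making the square commute, taking $\rho^{K',F'}_{\bar{f}',n}$ from Theorem \ref{comparison}. By construction it is a bijection of finite sets, and compatibility with the Galois action through the fields-of-norms isomorphism $G_{K'_\infty}\simeq G_{F'}$ follows formally from the equivariance of $\rho^{K',F'}_{\bar{f}',n}$ and of the two vertical bijections. The only step demanding genuine care, which I would emphasise in writing the argument out, is the verification that the vertical bijection on the $K$-side is $G_{K'_\infty}$-equivariant for the restriction of the $G_K$-action along $G_{K'_\infty}\hookrightarrow G_K$ (and similarly on the $F$-side): this amounts to unwinding the definitions of the $G_K$- and $G_{K'}$-actions on $\pi_0^\geom$ and observing that both are inherited from the natural action on the common $\bC$-affinoid, so the identification produced by Lemma \ref{connCp} intertwines them.
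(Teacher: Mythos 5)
Your proposal is correct and takes essentially the same route as the paper: base change along the relative-ramification-index-one extensions $K'/K$ and $F'/F$ (using the cartesian diagram of lifts to identify $\bar f'$, $f'$, $\mathbf f'$), invoke the compatibility of tubular neighborhoods with such base change together with Lemma \ref{connCp} to produce the two $G_{K'}$- and $G_{F'}$-equivariant vertical bijections through the common $\bC$- and $\fR$-adic spaces, and then define $\rho^{K,F}_{\bar f,n}$ by forcing commutativity against $\rho^{K',F'}_{\bar f',n}$ from Theorem \ref{comparison}. The equivariance check you flag at the end is exactly the point the paper appeals to implicitly when it says the bijections from Lemma \ref{connCp} are compatible with the $G_{K'}$- and $G_{F'}$-actions.
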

\begin{flushright}
\qed
\end{flushright}

\begin{rmk}
When $k$ is imperfect, the isomorphism $\rho_{\bar{f},n}^{K,F}$ depends on the choices of a $k$-algebra structure and a uniformizer of $A$, a $p$-basis and a Cohen ring $C(k)$ of $k$, an algebraic closure, a local map $C(k)\to \okey$, a uniformizer $\pi$, a lift $\{b_\lambda\}_{\lambda\in\Lambda}$ of the fixed $p$-basis of $k$ and compatible $p$-power roots of $\pi$ and $b_\lambda$ for $K$.
\end{rmk}

%------------------------------------------------------------------------------------------------

%------------------------------------------------------------------------------------------------

%------------------------------------------------------------------------------------------------

%------------------------------------------------------------------------------------------------

\section{Non-log ramification}\label{nonlog}

In the rest of the paper, we give applications of Theorem \ref{comparisonimperf} to ramification theory. Let $A$ be a truncated discrete valuation ring of length $m$, with residue field $k$ of characteristic $p>0$. We allow the case where $k$ is imperfect. We fix a uniformizer $\bar{\pi}$ of $A$. Let $B$ be a finite flat $A$-algebra. The aim of this section is to study ramification of the extension $B/A$, as in \cite{Ha} and \cite{HT}.

\subsection{Ramification theory over truncated discrete valuation rings}\label{Hatt}

First we briefly recall the construction of a ramification theory of $B/A$ for a fixed lift $(K,\iota)$ of $A$ given in \cite{Ha}. Let $(K,\iota)$ be a lift of $A$. Fix an algebraic closure $\Kbar$ and a uniformizer $\pi$ of $K$. We let $\kbar$ denote the residue field of $\Kbar$, as in Subsection \ref{subseclift}. Let $j\in\bQ\cap (0,m]$. Fix a system of finite generators $Z=(z_1,\ldots,z_n)$ of the $A$-algebra $B$. This defines a surjection of $A$-algebras
\[
A[X_1,\ldots,X_n]\to B,\ X_i\mapsto z_i.
\]
We let $\bar{I}_Z$ denote its kernel. Fix a system of finite generators $\bar{f}=\{\bar{f}_1,\ldots,\bar{f}_r\}$ of the ideal $\bar{I}_Z$. We put
\[
X^j_{K}(B,Z)=X^j_{K}(\bar{f},\sharp Z)
\]
with the notation of Section \ref{comp}, where we identify $j$ with the $r$-tuple $(j,\ldots,j)$. The $K$-affinoid variety $X^j_{K}(B,Z)$ is independent of the choice of a system of finite generators $\bar{f}$. This is referred to as the $j$-th tubular neighborhood of $B$ with respect to a system of finite generators $Z$ along the lift $(K,\iota)$.

We consider a question of functoriality of the finite $G_K$-set $\pi_0^\geom(X^j_{K}(B,Z))$.
Though this is done in \cite[Section 2]{Ha}, we present here detailed proofs of results stated in \cite{Ha} whose proofs are omitted, since we will use some of the omitted arguments. Let $B$ and $C$ be finite flat $A$-algebras. Let $Z=(z_1,\ldots,z_n)$ and $W=(w_1,\ldots,w_{n'})$ be systems of finite generators of the $A$-algebras $B$ and $C$, respectively. Let $\psi:B\to C$ be an $A$-algebra homomorphism satisfying $\psi(Z)\subseteq W$. Put $X=(X_1,\ldots,X_n)$ and $Y=(Y_1,\ldots,Y_{n'})$ as before. Choose a ring homomorphism $\Psi:\okey[X]\to \okey[Y]$ which makes the following diagram commutative:
\[
\xymatrix{
\okey[X] \ar[r]^{\Psi}\ar[d] & \okey[Y]\ar[d]\\
B\ar[r]_{\psi} & C,
}
\]
where the vertical arrows are the maps $X_i\mapsto z_i$ and $Y_i\mapsto w_i$ defined using $\iota:\okey\to A$. Put $\Psi(X)=(\Psi(X_1),\ldots,\Psi(X_n))\in \okey[Y]^n$.

\begin{lem}\label{func}
The map $\Psi$ induces a morphism of $K$-affinoid varieties
\[
\Psi^*: X^j_K(C,W)\to X^j_K(B,Z).
\]
Moreover, the map induced on the set of geometric connected components
\[
\Psi^*: \pi_0^\geom(X^j_K(C,W)) \to \pi_0^\geom(X^j_K(B,Z))
\]
is independent of the choice of $\Psi$.
\end{lem}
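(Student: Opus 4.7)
The plan is to prove the two assertions separately.

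For the existence of $\Psi^*$ as a morphism of affinoids, first I would extend $\Psi$ $K$-linearly and continuously to $K\la X\ra\to K\la Y\ra$: each $\Psi(X_i)\in\okey[Y]$ has supremum norm at most one on the unit polydisc, so its image lies in $\okey\la Y\ra$ and the extension is legitimate. The key step is then to verify that $\Psi(f_k)/\pi^j$ is power-bounded in $\sB^j_K(\bar{g},n')$, where $\bar{g}=\{\bar{g}_1,\ldots,\bar{g}_s\}$ is a system of generators of $\bar{I}_W$ with chosen lifts $g_l\in\okey[Y]$. The reduction $\bar{\Psi}:A[X]\to A[Y]$ satisfies that the composite $A[X]\to A[Y]\to C$ equals $A[X]\to B\to C$; hence $\bar{\Psi}(\bar{f}_k)$ maps to $\psi(\bar{f}_k(Z))=0$ in $C$, so $\bar{\Psi}(\bar{f}_k)\in\bar{I}_W$. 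Lifting a relation $\bar{\Psi}(\bar{f}_k)=\sum_l\bar{a}_l\bar{g}_l$ gives
\[
\Psi(f_k)=\sum_l a_l g_l+\pi^m h,\qquad a_l,h\in\okey[Y].
\]
Since $|g_l/\pi^j|\le 1$ in $\sB^j_K(\bar{g},n')$ by construction and $|\pi^{m-j}|\le 1$ by the standing hypothesis $j\le m$, the element $\Psi(f_k)/\pi^j$ is power-bounded, and $\Psi^*$ exists.

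For independence of the choice of $\Psi$, let $\Psi'$ be another lift and put $\delta_i:=\Psi(X_i)-\Psi'(X_i)\in\okey[Y]$. The image of $\delta_i$ in $C$ vanishes, so $\delta_i$ lies in the kernel $I_W=(\pi^m,g_1,\ldots,g_s)\subseteq\okey[Y]$. By Lemma \ref{connCp} it suffices to work with geometric points, and to show that for any $w\in\oc^{n'}$ with $|g_l(w)|\le|\pi|^j$, the tuples $z=\Psi(X)(w)$ and $z'=\Psi'(X)(w)$ in $\oc^n$ lie in the same connected component of $X^j_K(B,Z)\hat{\otimes}_K\bC$. Both belong to this affinoid by the first part, and the structure of $\delta_i$ combined with $|\pi^m|\le|\pi|^j$ yields $|z_i-z'_i|\le|\pi|^j$. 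I would then connect $z$ and $z'$ by the straight-line parametrization $z_t=z+t(z'-z)$ for $t\in\oc$. Writing $f_k(X+V)-f_k(X)=\sum_i V_i h_{k,i}(X,V)$ with $h_{k,i}\in\okey[X,V]$ and specializing $V_i=t(z'_i-z_i)$ yields $|f_k(z_t)-f_k(z)|\le|\pi|^j$, hence $|f_k(z_t)|\le|\pi|^j$. Thus $t\mapsto z_t$ defines a morphism from the closed unit disc $\Spv(\bC\la T\ra)$ into $X^j_K(B,Z)\hat{\otimes}_K\bC$ carrying $0$ to $z$ and $1$ to $z'$, and its image is connected.

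The main obstacle is the independence step, where the straight-line interpolation must stay inside the tubular neighborhood; this is exactly where the hypothesis $j\le m$ plays an essential role, both for bounding $|\delta_i(w)|$ and for bounding the $\pi^m$ contribution in $\Psi(f_k)$. Without this inequality the two lifts would be allowed to produce points that differ by an amount larger than $|\pi|^j$, and the interpolation would leave the tube. The remaining ingredients---continuous extension of polynomial maps to Tate algebras, the passage from $K$-affinoid varieties to their $\bC$-geometric connected components via Lemma \ref{connCp}, and the connectedness of the closed disc (e.g.\ via Lemma \ref{adrigconn})---are then formal.
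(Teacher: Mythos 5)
Your proof is correct and takes essentially the same approach as the paper's. The only stylistic difference is in how the second assertion is finished: you verify directly, via a straight-line parametrization $t \mapsto z + t(z'-z)$ over the closed unit disc, that the two image points lie in the same connected component once $|z_i - z'_i| \leq |\pi|^j$; the paper instead packages the same fact into a cartesian diagram that exhibits $X^j_K(B,Z)(\Kbar)$ as fibering over $\Hom_{A\text{-alg.}}(B, \okbar/\mathfrak{m}^j_{\Kbar})$ with connected polydisc fibers, from which independence follows formally since $\psi^*$ on $\Hom$-sets does not see $\Psi$. Both arguments hinge on the identical observation that two lifts of $\psi$ can only differ by an element of $(\pi^m, g_1,\ldots,g_s)$, hence by at most $|\pi|^j$ at any point of the tube (using $j \leq m$), and that such nearby points share a connected component. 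Your explicit path is a concrete realization of the paper's "connected fiber"; there is no genuine gap.
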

\begin{proof}
This is implicit in \cite[Section 2]{Ha}, and its proof is similar to \cite[Lemma 1.9]{AS2}. Let $\bar{f}=\{\bar{f}_1,\ldots,\bar{f}_r\}$ be as above and $f$ be its lift by the surjection $\iota$, as in Subsection \ref{subsectub}. Define $\bar{g}=\{\bar{g}_1,\ldots,\bar{g}_{r'}\}$ and $g$ similarly for $C$. Then the kernel of the surjection $\okey[X]\to B$ is the ideal $(\pi^m, f_1,\ldots,f_r)$ and similarly for $C$. The polynomial $f_i(\Psi(X))\in \okey[Y]$ is contained in the ideal $(\pi^m,g_1,\ldots, g_{r'})$. This shows the implication
\[
|g_i(y)|\leq |\pi|^j \text{ for any }i\Rightarrow |f_i(\Psi(X))(y)|\leq |\pi|^j \text{ for any }i
\]
for any $y=(y_1,\ldots,y_{n'})\in \cO^{n'}_{\Kbar}$ and the first assertion follows. Moreover, we have a natural cartesian diagram
\[
\xymatrix{
X^j_K(B,Z)(\Kbar) \ar[r]\ar[d] & \Hom_{\okey\text{-alg.}}(\okey[X],\okbar)\ar[d]\\
\Hom_{A\text{-alg.}}(B,\okbar/\mathfrak{m}_{\Kbar}^j)\ar[r] & \Hom_{\okey\text{-alg.}}(\okey[X],\okbar/\mathfrak{m}_{\Kbar}^j),
}
\]
where $\mathfrak{m}^j_{\Kbar}=\{x\in \okbar\mid |x|\leq |\pi|^j\}$ and the vertical arrows are surjections. Since the fiber of the left vertical arrow is the polydisc of radius $|\pi|^j$ and it is connected, we have a commutative diagram
\[
\xymatrix{
\Hom_{A\text{-alg.}}(C,\okbar/\mathfrak{m}_{\Kbar}^j)\ar[r]^{\psi^*}\ar[d] & \Hom_{A\text{-alg.}}(B,\okbar/\mathfrak{m}_{\Kbar}^j)\ar[d]\\
\pi_0^\geom(X^j_K(C,W))\ar[r]_{\Psi^*} & \pi_0^\geom(X^j_K(B,Z))
}
\]
whose vertical arrows are surjections. The second assertion follows from this diagram.
\end{proof}

Applying Lemma \ref{func} to the case of $\id:B\to B$, we obtain the inverse system
\[
(\pi_0^\geom(X^j_K(B,Z)))_Z.
\]
\begin{lem}\label{const}
This inverse system is constant.
\end{lem}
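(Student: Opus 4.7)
The plan is to reduce to the case where $Z'$ refines $Z$ by a single element, and then construct an explicit retraction using a polynomial expression for the new generator in terms of the old ones. Any two systems of finite generators of $B$ admit a common refinement (e.g.\ their concatenation), so the index set is filtered. It therefore suffices to show that if $Z' = (z_1,\ldots,z_n,z_{n+1})$ refines $Z = (z_1,\ldots,z_n)$ by adjoining one generator, then the transition map
\[
\Psi^* : \pi_0^\geom(X^j_K(B,Z')) \to \pi_0^\geom(X^j_K(B,Z))
\]
induced by the inclusion $\Psi : \okey[X_1,\ldots,X_n] \hookrightarrow \okey[X_1,\ldots,X_{n+1}]$ is a bijection.

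Since $Z$ already generates $B$ as an $A$-algebra, I would write $z_{n+1} = P(z_1,\ldots,z_n)$ for some $P \in A[X_1,\ldots,X_n]$, lift $P$ to $\tilde{P} \in \okey[X_1,\ldots,X_n]$, and define an $\okey$-algebra map
\[
\Phi : \okey[X_1,\ldots,X_{n+1}] \to \okey[X_1,\ldots,X_n], \qquad X_i \mapsto X_i\ (i\leq n),\ X_{n+1} \mapsto \tilde{P}.
\]
Rerunning the ideal-theoretic step from the proof of Lemma \ref{func}, one verifies that $\Phi$ sends any lift of a generator of $\bar I_{Z'}$ into the ideal $(\pi^m)+(\text{lifts of generators of } \bar I_Z)$; combined with the assumption $j\leq m$ and the fact that $\tilde{P}\in \okey[X]$ takes values of absolute value $\leq 1$ on the unit polydisc, this yields a well-defined morphism of $K$-affinoid varieties
\[
\Phi^* : X^j_K(B,Z) \to X^j_K(B,Z').
\]

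The bijectivity of $\Psi^*$ on geometric connected components is then formal. On the ring side $\Phi\circ\Psi = \id_{\okey[X_1,\ldots,X_n]}$, so $\Psi^*\circ\Phi^* = \id$ already at the variety level. In the other order, $\Psi\circ\Phi$ is an $\okey$-algebra endomorphism of $\okey[X_1,\ldots,X_{n+1}]$ which is compatible with the identity $(B,Z')\to(B,Z')$ in the sense of Lemma \ref{func}, the crucial check being that $\tilde{P}\equiv P\pmod{\pi^m}$ and $P(z_1,\ldots,z_n) = z_{n+1}$ in $B$. Applying the independence-of-$\Psi$ assertion of Lemma \ref{func} to $\psi = \id$ with both the identity and with $\Psi\circ\Phi$ as choices, both induce the identity on $\pi_0^\geom(X^j_K(B,Z'))$; hence $\Phi^*\circ\Psi^* = (\Psi\circ\Phi)^* = \id$ on $\pi_0^\geom$ as well, and $\Psi^*$ is a $G_K$-equivariant bijection. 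The one delicate point I anticipate is that $\Phi$ does not literally satisfy the hypothesis ``$\psi(Z)\subseteq W$'' of Lemma \ref{func} (which would require the impossible inclusion $Z'\subseteq Z$), so the construction of $\Phi^*$ must be done by rerunning the ideal computation inside that lemma's proof rather than quoting the lemma as a black box; the computation goes through because what is really needed is only that $\Phi$ be a ring homomorphism compatible with $\id:B\to B$, which is true by the choice of $\tilde{P}$.
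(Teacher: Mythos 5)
Your proof is correct. It shares the paper's reduction to a one-element refinement $Z'=(z_1,\ldots,z_n,z_{n+1})$ and the same lift $\tilde P = h$ of the polynomial expression of $z_{n+1}$ in terms of $Z$, but diverges in the closing step. The paper observes that the shear $(x_1,\ldots,x_{n+1})\mapsto (x_1,\ldots,x_n,x_{n+1}-h(x_1,\ldots,x_n))$ is an isomorphism of $K$-affinoid varieties $X^j_K(B,Z')\xrightarrow{\sim} X^j_K(B,Z)\times D_K(0,|\pi|^j)$ over $X^j_K(B,Z)$; since the fibre is a geometrically connected disc, the projection induces a bijection on $\pi_0^\geom$ immediately. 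You instead build the zero-section $\Phi^*:X^j_K(B,Z)\to X^j_K(B,Z')$ (which in the paper's coordinates is exactly $x_{n+1}=h(x)$), note $\Phi\circ\Psi=\id$ gives $\Psi^*\circ\Phi^*=\id$ on the nose, and recover $\Phi^*\circ\Psi^*=\id$ on $\pi_0^\geom$ by feeding both $\id$ and $\Psi\circ\Phi$ into the independence-of-lift clause of Lemma~\ref{func}. Both routes are valid; the paper's is more self-contained (it never needs to re-invoke Lemma~\ref{func}, only that a disc is connected) and gives the stronger statement that $X^j_K(B,Z')$ fibres over $X^j_K(B,Z)$ with connected fibres, whereas yours is lighter on explicit coordinate computation and makes the retraction structure visible. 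You correctly flag the one technical subtlety: the map $\Phi$ is not covered by the literal hypotheses of Lemma~\ref{func} (the inclusion $Z'\subseteq Z$ fails), so well-definedness of $\Phi^*$ must be checked by rerunning the ideal containment $\Phi(\pi^m,g_1,\ldots,g_s)\subseteq(\pi^m,f_1,\ldots,f_r)$ directly, which goes through since the composite $\okey[X_1,\ldots,X_{n+1}]\xrightarrow{\Phi}\okey[X_1,\ldots,X_n]\to B$ agrees with the given surjection.
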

\begin{proof}
This is also implicit in \cite{Ha}, and the proof is similar to \cite[Lemma 3.1]{AS1}. We may assume that $Z=(z_1,\ldots,z_n)$ and $W=(z_1,\ldots,z_n,z_{n+1})$. Let $f=\{f_1,\ldots, f_r\}$ be as before. Consider a system of finite generators of the kernel of the surjection $A[X_1,\ldots,X_{n+1}]\to B$ associated to $W$. Then its lift $g$ by $\iota$ can be taken as $g=\{f_1,\ldots, f_r,X_{n+1}-h\}$ with some $h\in \okey[X_1,\ldots,X_n]$. Thus the map 
\[
(x_1,\ldots,x_n,x_{n+1})\mapsto (x_1,\ldots,x_n,x_{n+1}-h(x_1,\ldots,x_n))
\]
induces an isomorphism of $K$-affinoid varieties $X^j_K(B,W)\to X^j_K(B,Z)\times D_K(0,|\pi|^j)$ fitting into the commutative diagram
\[
\xymatrix{
X^j_K(B,W)\ar[r]\ar[rd]& X^j_K(B,Z)\times D_K(0,|\pi|^j)\ar[d]^{\prjt_1} \\
& X^j_K(B,Z),
}
\]
where $D_K(0,|\pi|^j)$ is the one-dimensional disc of radius $|\pi|^j$ centered at the origin. This implies the lemma. 
\end{proof}

By Lemma \ref{func} and Lemma \ref{const},
\[
\cF^j_{(K,\iota)}(B)=\varprojlim_Z \pi_0^\geom(X^j_K(B,Z))
\]
defines a contravariant functor from the category of finite flat $A$-algebras to that of finite $G_K$-sets.

%------------------------------------------------------------------------------------------------

%------------------------------------------------------------------------------------------------

\subsection{Ramification correspondence between different characteristics}\label{subsubseccomp}

Now we assume $pA=0$ and fix a $k$-algebra structure of $A$ which gives a section of the reduction map $A\to k$, as in Subsection \ref{subsecimperf}. Let $(K,\iota)$ and $(F,\boldsymbol{\iota})$ be lifts of $A$ as in Subsection \ref{subseclift}. 

Let $B$ be a finite flat $A$-algebra, $Z$ be its system of finite generators and $j$ be a positive rational number satisfying $j\leq m$. Let $\bar{f}$ be a finite subset of $A[X]$ with respect to $Z$, as before. Let $\Kbar$, $\bC$, $\fR$, $K'$, $K'_\infty$ and $F'$ be as in Subsection \ref{subsecimperf}. From the definition of the $j$-th tubular neighborhoods $X_K^j(B,Z)$ and $X_F^j(B,Z)$, Theorem \ref{comparisonimperf} yields an isomorphism of finite $G_{K'_\infty}$-sets
\[
\rho^{K,F}_{\bar{f},\sharp Z}: \pi_0^\geom(X_K^j(B,Z))\to \pi_0^\geom(X_F^j(B,Z)),
\]
which is also denoted by $\rho^{K,F}_{B,Z}$. Then the main result of this subsection is the following.

\begin{prop}\label{compfunc}
The isomorphism $\rho^{K,F}_{B,Z}$ induces a natural isomorphism of functors
\[
\rho^{K,F}: \cF^j_{(K,\iota)}(\cdot)|_{G_{K'_\infty}} \to \cF^j_{(F,\boldsymbol{\iota})}(\cdot)|_{G_{F'}}
\]
from the category of finite flat $A$-algebras to that of finite $G_{K'_\infty}$-sets, via the isomorphism
\[
G_{K'_\infty}\simeq G_{F'}
\]
of the classical theory of fields of norms.
\end{prop}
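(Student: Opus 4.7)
The plan is to verify two things: first, that $\rho^{K,F}_{B,Z}$ is compatible with refinement of the system of generators $Z$ and therefore descends to a well-defined map $\cF^j_{(K,\iota)}(B) \to \cF^j_{(F,\boldsymbol{\iota})}(B)$; and second, that this collection of maps is natural in the finite flat $A$-algebra $B$. By Lemma \ref{const} and its obvious analogue over $F$, both inverse systems are already constant, so I may work with a single representative $Z$ at each step.

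For independence of $Z$, I take $W=(z_1,\ldots,z_n,z_{n+1})$ refining $Z$ as in the proof of Lemma \ref{const}. There one identifies $X^j_K(B,W)$ with $X^j_K(B,Z)\times D_K(0,|\pi|^j)$ via the change of coordinates $X_{n+1}\mapsto X_{n+1}-h$ with $h\in\okey[X_1,\ldots,X_n]$. Choosing $h$ so that its image in $A$ coincides with the image of the corresponding polynomial in $k[[u]][X_1,\ldots,X_n]$ (which is possible since both lift the same element of $A$ modulo $\pi^m=u^m$), the same change of coordinates gives the parallel product decomposition on the $F$-side. Tracing through the diagram \eqref{diagramjpi0}, the tilting homeomorphism $\tau$ sends a unit perfectoid polydisc factor to a unit perfectoid polydisc factor, so $\rho^{K,F}_{B,W}$ is compatible with the projections to $\rho^{K,F}_{B,Z}$.

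For naturality in $\psi:B\to C$ with $\psi(Z)\subseteq W$, fix a lift $\Psi:\okey[X]\to\okey[Y]$ as in Lemma \ref{func} and a lift $\boldsymbol{\Psi}:k[[u]][X]\to k[[u]][Y]$ of the same reduction $\bar{\Psi}:A[X]\to A[Y]$. The target is the equality $\rho^{K,F}_{B,Z}\circ \Psi^*=\boldsymbol{\Psi}^*\circ \rho^{K,F}_{C,W}$ on $\pi_0^\geom$. I would chase a classical point $x=(x_1,\ldots,x_{n'})\in X^{j,\ad}_\bC(C,W)$: a compatible system $(x_{i,l})_{l\geq 0}$ of $p$-power roots yields $\ux_i\in\oR$ with $\ux_i^\sharp=x_i$ and a perfectoid lift $\ux\in X^{j,\ad}_{\bC,\infty}(C,W)$ whose image under the diagram \eqref{diagramjpi0} is the component of $X^{j,\ad}_\fR(C,W)$ containing the $\oR$-valued point $(\ux_1,\ldots,\ux_{n'})$. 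Thus $\boldsymbol{\Psi}^*(\rho^{K,F}_{C,W}([x]))$ is represented by the $\oR$-point $(\boldsymbol{\Psi}(X_i)(\ux))_i$, while $\rho^{K,F}_{B,Z}(\Psi^*([x]))$ is represented by the tilt of any perfectoid lift of $(\Psi(X_i)(x))_i\in\okbar^n$, say $\underline{\zeta}_i\in\oR$ with $\underline{\zeta}_i^\sharp=\Psi(X_i)(x)$. Multiplicativity of $(\cdot)^\sharp$ together with Corollary \ref{congruence} applied to $\boldsymbol{\Psi}(X_i)$ and $\Psi(X_i)$ yields the key congruence
\[
(\boldsymbol{\Psi}(X_i)(\ux))^\sharp\equiv \bar{\Psi}(X_i)(x)\equiv \Psi(X_i)(x)=\underline{\zeta}_i^\sharp\pmod{\pi^m}
\]
in $\oc$, hence $|\underline{\zeta}_i-\boldsymbol{\Psi}(X_i)(\ux)|\leq |u|^m$ in $\oR$.

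I then finish by the polydisc argument of Lemma \ref{tauconn}: the rational subset $U=\{z\in X^{\ad}_{\fR,0}\mid |(X_i-\underline{\zeta}_i)(z)|\leq |u(z)|^m\text{ for all }i\}$ is a polydisc, hence connected by Lemma \ref{pi0toinf}; it contains both representative points; and since $j\leq m$, applying Corollary \ref{congruence} to $\bar{f}_i\in A[Z]$ shows $U$ lies inside $X^{j,\ad}_\fR(B,Z)$. This yields the desired equality in $\pi_0^\geom(X^j_F(B,Z))$, and $G_{K'_\infty}$-equivariance is inherited from Theorem \ref{comparisonimperf}. The main obstacle is this last congruence-polydisc argument, which involves two independently chosen lifts $\Psi$ and $\boldsymbol{\Psi}$; however, by Lemma \ref{func} the map $\Psi^*$ on $\pi_0^\geom$ is already known to be independent of the choice of $\Psi$, so one may freely arrange $\Psi$ and $\boldsymbol{\Psi}$ to reduce to the same $\bar{\Psi}$ modulo $\pi^m=u^m$ before running the comparison.
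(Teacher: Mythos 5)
Your proof matches the paper's in all essentials: choose lifts $\Psi$ over $\okey$ and $\boldsymbol{\Psi}$ over $k[[u]]$ reducing to a common $\bar{\Psi}$ modulo $\pi^m=u^m$, reduce the naturality question to a comparison at classical points and their tilts, derive the key congruence from Lemma \ref{sharp}/Corollary \ref{congruence} applied to the two lifts, and finish with the congruence--polydisc argument of Lemma \ref{tauconn}. Two remarks worth making. First, the paper's proof begins with an explicit base change to the perfect residue field case, replacing $A$, $B$, $C$ by $A'=A\otimes_k k'$, $B'$, $C'$ and invoking the commutative square linking $\iota$, $\boldsymbol{\iota}$, $\iota'$, $\boldsymbol{\iota}'$; since diagram (\ref{iotadiagram}), Lemma \ref{sharp}, and the identification $\oR/(\upi^m)\simeq\okbar/(\pi^m)$ that you use for the congruence $(\boldsymbol{\Psi}(X_i)(\ux))^\sharp\equiv\Psi(X_i)(x)\bmod\pi^m$ are all stated only under the hypothesis that $k$ is perfect, your argument should make this reduction explicit before chasing points. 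Second, your separate well-definedness step (independence of $Z$) is already subsumed by naturality applied to $\psi=\id_B$ with $Z\subseteq W$ — the transition maps of the inverse system $(\pi_0^\geom(X^j_K(B,Z)))_Z$ are precisely those $\Psi^*$ — so the paper does not treat it separately, and the somewhat informal ``$\tau$ sends a unit perfectoid polydisc factor to a unit perfectoid polydisc factor'' claim there can be dropped. (Also minor: the connectedness of your polydisc $U\subseteq X^\ad_{\fR,0}$ should be attributed to the classical theory via Lemma \ref{adrigconn}, or to Lemma \ref{phomeo} as the paper does, not to Lemma \ref{pi0toinf}.)
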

\begin{proof}
Let $A$, $B$, $Z$ and $\bar{f}$ be as above. Recall that we considered a map $A\to A'$ of truncated discrete valuation rings in Subsection \ref{subsecimperf}. Put $B'=B\otimes_A A'$, which is a finite flat $A'$-algebra. Then $Z$ defines a system of finite generators of the $A'$-algebra $B'$, which is denoted by $Z'$. The kernel of the surjection $A'[X]\to B'$ associated to $Z'$ is generated by the image $\bar{f}'$ of $\bar{f}$ by the map $A\to A'$. Thus, from the definition of the map $\rho^{K,F}_{B,Z}$, it is enough to show a similar statement for the map $\rho^{K',F'}_{B',Z'}$. Namely, we may assume that the residue field $k$ is perfect.

Let $C$ be a finite flat $A$-algebra with system of finite generators $W$. Let $\psi:B\to C$ be an $A$-algebra homomorphism satisfying $\psi(Z)\subseteq W$. Put $n'=\sharp W$ and $Y=(Y_1,\ldots,Y_{n'})$. We choose a lift $\Psi:\okey[X]\to \okey[Y]$ of $\psi$ along the lift $(K,\iota)$ as in Subsection \ref{Hatt} and put $\Psi_i=\Psi(X_i)\in \okey[Y]$. 

Consider the adic spaces
\[
X^\ad_{\bC,0}=\Spa(\bC\la X\ra, \oc\la X\ra),\ Y^\ad_{\bC,0}=\Spa(\bC\la Y\ra, \oc\la Y\ra).
\]
Let $X^\ad_{\bC,\infty}$ be the adic space over $X^\ad_{\bC,0}$ considered in Subsection \ref{subseccomp}, and $Y^\ad_{\bC,\infty}$ be a similar adic space for $Y^\ad_{\bC,0}$. Let $X^{j,\ad}_{\bC}(B,Z)$ be the adic space associated to the base change of the rigid analytic space $X^j_K(B,Z)$ to $\Spv(\bC)$, as before. Let $X^{j,\ad}_{\bC,\infty}(B,Z)$ be its inverse image by the natural projection $p_{\infty,0}: X^\ad_{\bC,\infty}\to X^\ad_{\bC,0}$. We define adic spaces $Y^{j,\ad}_{\bC}(C,W)$ and $Y^{j,\ad}_{\bC,\infty}(C,W)$ similarly, using $Y^\ad_{\bC,0}$ and $Y^\ad_{\bC,\infty}$. The map $\Psi$ induces a morphism of adic spaces $\Psi^*:Y^\ad_{\bC,0}\to X^\ad_{\bC,0}$ which maps the rational subset $Y^{j,\ad}_{\bC}(C,W)$ to $X^{j,\ad}_{\bC}(B,Z)$. Thus we have a diagram of finite sets
\[
\xymatrix{
\pi_0(Y^{j,\ad}_{\bC,\infty}(C,W))\ar@{.>}[r]\ar[d] &  \pi_0(X^{j,\ad}_{\bC,\infty}(B,Z))\ar[d]\\
\pi_0(Y^{j,\ad}_{\bC}(C,W))\ar[r]_{\Psi^*} &  \pi_0(X^{j,\ad}_{\bC}(B,Z)),
}
\]
where the lower horizontal arrow is compatible with the Galois action. Since the vertical arrows are also bijections compatible with the Galois action by Lemma \ref{pi0toinf}, there exists a unique map
\[
\pi_0(\Psi^*)_{\infty}:\pi_0(Y^{j,\ad}_{\bC,\infty}(C,W))\to \pi_0(X^{j,\ad}_{\bC,\infty}(B,Z))
\]
which makes the above diagram commutative. From the definition, we see that this map is also compatible with the Galois action.

On the other hand, let 
\[
X^\ad_{\fR,0},\ Y^\ad_{\fR,0},\ X^\ad_{\fR,\infty},\ Y^\ad_{\fR,\infty},\ X^{j,\ad}_{\fR,\infty}(B,Z)\text{ and }Y^{j,\ad}_{\fR,\infty}(C,W)
\]
be similar adic spaces on the side of $F$. We choose $\Psi^\flat_i\in \oef[Y]$ so that the images of $\Psi_i$ and $\Psi^\flat_i$ in the ring $A[Y]$ by the surjections induced by $\iota$ and $\boldsymbol{\iota}$ coincide with each other. Let $\Psi^\flat:\oef[X]\to \oef[Y]$ be the map defined by $X_i\mapsto \Psi^\flat_i$. Then it is a lift of $\psi$ along $(F,\boldsymbol{\iota})$ as in Subsection \ref{Hatt}. This induces a morphism of adic spaces $(\Psi^\flat)^*:Y^\ad_{\fR,0}\to X^\ad_{\fR,0}$. Note that, by the choice of $\Psi^\flat_i$, Lemma \ref{sharp} yields the congruence
\begin{equation}\label{Psicongruence}
(\Psi^\flat_i)^\sharp \equiv \Psi_i\bmod \pi^m
\end{equation}
in the ring $\oc\la Y^{1/p^\infty}\ra$.

Since the integral domain $\oR\la Y^{1/p^\infty} \ra$ is perfect, we have the unique $p^l$-th root $(\Psi^\flat_i)^{1/p^l}$ of $\Psi^\flat_i$ in this ring. The map $X^{1/p^l}_i\mapsto (\Psi^\flat_i)^{1/p^l}$ defines a morphism of adic spaces $(\Psi^\flat_\infty)^*:Y^\ad_{\fR,\infty}\to X^\ad_{\fR,\infty}$ which fits into the commutative diagram
\[
\xymatrix{
Y^\ad_{\fR,\infty}\ar[r]^{(\Psi^\flat_\infty)^*}\ar[d] & X^\ad_{\fR,\infty}\ar[d]\\
Y^\ad_{\fR,0}\ar[r]_{(\Psi^\flat)^*}& X^\ad_{\fR,0}
}
\]
and induces a continuous map $(\Psi^\flat_\infty)^*: Y^{j,\ad}_{\fR,\infty}(C,W)\to X^{j,\ad}_{\fR,\infty}(B,Z)$. Moreover, we can see as above that the induced map
\[
\pi_0(Y^{j,\ad}_{\fR,\infty}(C,W))\overset{(\Psi^\flat_\infty)^*}{\to} \pi_0(X^{j,\ad}_{\fR,\infty}(B,Z))
\]
is also compatible with the Galois action. Hence we are reduced to showing that the lower square of the diagram
\[
\xymatrix{
\pi_0(Y^{j,\ad}_{\bC}(C,W))\ar[r]^{\Psi^*} & \pi_0(X^{j,\ad}_{\bC}(B,Z)) \\
\pi_0(Y^{j,\ad}_{\bC,\infty}(C,W))\ar[r]^{\pi_0(\Psi^*)_\infty}\ar[d]_{\tau}\ar[u]^{\wr} & \pi_0(X^{j,\ad}_{\bC,\infty}(B,Z))\ar[d]^{\tau}\ar[u]_{\wr} \\
\pi_0(Y^{j,\ad}_{\fR,\infty}(C,W))\ar[r]_{(\Psi^\flat_\infty)^*} &\pi_0(X^{j,\ad}_{\fR,\infty}(B,Z))
}
\]
is commutative. 

This can be shown as in the proof of Lemma \ref{tauconn}: Take a point $y$ in a connected component of $Y^{j,\ad}_{\bC,\infty}(C,W)$ defined by $Y^{1/p^l}_i\mapsto y_{i,l}$ with some $y_{i,l}\in \oc$ satisfying $y_{i,l+1}^p=y_{i,l}$ for any $l$. Put $y_0=(y_{1,0},\ldots,y_{n',0})$. Note that $y_0\in Y^{j,\ad}_{\bC}(C,W)$ and $\Psi^*(y_0)\in X^{j,\ad}_{\bC}(B,Z)$. Moreover, the latter point is defined by the map $X_i\mapsto \Psi_i(y_0)$. Choose a system $(\Psi_i(y_0)^{1/p^l})_{l\geq 0}$ of its $p$-power roots in $\oc$ satisfying $(\Psi_i(y_0)^{1/p^{l+1}})^p=\Psi_i(y_0)^{1/p^l}.$ Then the map $X_i^{1/p^l}\mapsto \Psi_i(y_0)^{1/p^l}$ gives a point $\Psi^*(y_0)^{1/p^\infty}$ of the adic space $X^{j,\ad}_{\bC,\infty}(B,Z)$. From the definition, we see that the map $\pi_0(\Psi^*)_\infty$ sends the connected component containing $y$ to the connected component containing $\Psi^*(y_0)^{1/p^\infty}$.

It is enough to show that the points
\[
(\Psi^\flat_\infty)^*(\tau(y))\text{ and } \tau(\Psi^*(y_0)^{1/p^\infty})
\]
are contained in the same connected rational subset of the adic space $X^{j,\ad}_{\fR,\infty}(B,Z)$. Put $\uy_i=(y_{i,l})_l\in \oR$ and $\uy=(\uy_1,\ldots,\uy_{n'})$ as before. Then we have $\uy^\sharp=(y_{1,0},\ldots, y_{n',0})$ and the commutative diagram (\ref{iotadiagram}) implies $\uy\in Y^{j,\ad}_{\fR}(C,W)$. We let $\uy$ also denote the unique inverse image of this point in $Y^{j,\ad}_{\fR,\infty}(C,W)$. Let $V$ be the rational subset of $X^{\ad}_{\fR,\infty}$ defined by
\[
V=\{ z \in X^{\ad}_{\fR,\infty} \mid |(X_i-\Psi^\flat_i(\uy))(z)|\leq |u(z)|^m\}.
\]
From Lemma \ref{phomeo}, we see that $V$ is connected. By the definition of the map $(\Psi^\flat_\infty)^*$, the point $(\Psi^\flat_\infty)^*(\uy)$ lies in $V$ and the assumption $j\leq m$ implies $V\subseteq X^{j,\ad}_{\fR,\infty}(B,Z)$. By the commutative diagram (\ref{iotadiagram}), Lemma \ref{sharp} and the congruence (\ref{Psicongruence}), we obtain the equivalences
\begin{align*}
&|(\Psi^\flat_i-\Psi^\flat_i(\uy))^\sharp(y)|\leq |u^\sharp(y)|^m \Leftrightarrow |(\Psi^\flat_i)^\sharp(y)-(\Psi^\flat_i(\uy))^\sharp|\leq |\pi|^m\\
&\Leftrightarrow |\Psi_i(y_0)-\Psi_i(y_0)|\leq |\pi|^m \Leftrightarrow |\Psi_i(y_0)-(\Psi^\flat_i(\uy))^\sharp|\leq |\pi|^m\\
&\Leftrightarrow |(X_i-(\Psi^\flat_i(\uy))^\sharp)(\Psi^*(y_0)^{1/p^\infty})|\leq |\pi(\Psi^*(y_0)^{1/p^\infty})|^m\\
&\Leftrightarrow |(X_i-\Psi^\flat_i(\uy))^\sharp(\Psi^*(y_0)^{1/p^\infty})|\leq |u^\sharp(\Psi^*(y_0)^{1/p^\infty})|^m.
\end{align*} 
This implies the claim and concludes the proof of the proposition.
\end{proof}

\begin{cor}\label{FuncIndep}
Let $j$ be a positive rational number satisfying $j\leq m$. Let $(K_1,\iota_1)$ and $(K_2,\iota_2)$ be lifts of $A$. Then there exists a natural isomorphism
\[
\rho_{K_1,K_2}:\cF^j_{(K_1,\iota_1)} \to \cF^j_{(K_2,\iota_2)}
\]
of functors from the category of finite flat $A$-algebras to that of finite sets.
\end{cor}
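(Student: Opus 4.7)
The plan is to use Proposition \ref{compfunc} twice, with a common equal-characteristic lift as a bridge. Under the standing assumption $pA=0$ of this subsection, I would first fix a section $k\to A$ of the reduction map, which exists by \cite[Th\'eor\`eme (19.6.1)]{EGA4-1} since the extension $k/\bF_p$ is separable. This $k$-algebra structure on $A$ gives rise to an equal-characteristic lift $(F,\boldsymbol{\iota})$ with $F=k((u))$ and $\boldsymbol{\iota}(u)=\bar{\pi}$, as in Subsection \ref{subseclift}. Both mixed/equal-characteristic lifts $(K_1,\iota_1)$ and $(K_2,\iota_2)$ can then be compared against this single common object.

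Applying Proposition \ref{compfunc} to each lift separately, I would obtain natural isomorphisms of functors
\[
\rho^{K_i,F}:\cF^j_{(K_i,\iota_i)}(\cdot)|_{G_{K'_{i,\infty}}} \to \cF^j_{(F,\boldsymbol{\iota})}(\cdot)|_{G_{F'}}
\]
for $i=1,2$. Since the corollary claims only an isomorphism of functors valued in finite sets (not in finite $G$-sets, which makes sense because $G_{K_1}$ and $G_{K_2}$ need not be canonically identified), I would forget the Galois actions and define
\[
\rho_{K_1,K_2} := (\rho^{K_2,F})^{-1}\circ \rho^{K_1,F}: \cF^j_{(K_1,\iota_1)} \to \cF^j_{(K_2,\iota_2)}.
\]
Naturality of each $\rho^{K_i,F}$ with respect to morphisms of finite flat $A$-algebras, which is precisely the content of Proposition \ref{compfunc}, immediately implies that the composite $\rho_{K_1,K_2}$ is a natural isomorphism of functors.

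There is essentially no obstacle here: once Proposition \ref{compfunc} is in hand, the corollary is just a repackaging. The only minor point to keep in mind is that the two applications of Proposition \ref{compfunc} involve different auxiliary choices (algebraic closures $\Kbar_i$, uniformizers $\pi_i$ of $K_i$, compatible systems of $p$-power roots, etc.), but since we have forgotten the Galois actions these auxiliary choices do not interfere with the construction of $\rho_{K_1,K_2}$ as a map of finite sets, and the resulting natural isomorphism depends only on the chosen bridge $(F,\boldsymbol{\iota})$ through the section $k\to A$.
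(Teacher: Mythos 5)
Your proof takes exactly the paper's route: use the fixed $k$-algebra structure on $A$ to produce the common equal-characteristic bridge $(F,\boldsymbol{\iota})$ with $F=k((u))$, apply Proposition~\ref{compfunc} twice, forget the Galois actions, and compose. This is the intended argument and it works.

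The one point you gloss over — and which the paper's proof singles out as the fact to note — is that the two applications of Proposition~\ref{compfunc} do not land in literally the same functor. For $K_1$ the construction uses the algebraic closure of $F$ inside $\fR_1=\bC_1^\flat$ (where $\bC_1$ is the completed algebraic closure of $K_1$), while for $K_2$ it uses the algebraic closure of $F$ inside $\fR_2=\bC_2^\flat$. So $\rho^{K_1,F}$ and $(\rho^{K_2,F})^{-1}$ do not directly compose; one must first observe that $\cF^j_{(F,\boldsymbol{\iota})}$, as a functor to finite sets, is independent of the choice of algebraic closure of $F$ up to a natural isomorphism. You wave at this ("auxiliary choices do not interfere"), but the composition $(\rho^{K_2,F})^{-1}\circ\rho^{K_1,F}$ only makes literal sense after inserting that natural isomorphism in the middle. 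This is a small but real step, and it is precisely what the paper flags before invoking Proposition~\ref{compfunc}.
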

\begin{proof}
Note that the functor $\cF^j_{(F,\boldsymbol{\iota})}$ is independent of the choice of an algebraic closure of $F$, up to a natural isomorphism. The corollary follows from this fact and Proposition \ref{compfunc}.
\end{proof}

\begin{rmk}
The natural isomorphism $\rho_{K_1,K_2}$ depends on various choices: it depends on the choices of a $k$-algebra structure and a uniformizer of $A$, a $p$-basis and a Cohen ring $C(k)$ of $k$, an algebraic closure, a local map $C(k)\to \cO_{K_1}$, a uniformizer, a lift of the fixed $p$-basis of $k$ and their compatible $p$-power roots for $K_1$ and similar choices for $K_2$. 
\end{rmk}

%------------------------------------------------------------------------------------------------

%------------------------------------------------------------------------------------------------

\subsection{Ramification of complete intersection $A$-algebras}

Let $A$ be a truncated discrete valuation ring of length $m$. Let $B$ be a finite flat $A$-algebra which is relatively of complete intersection (\cite[D\'{e}finition (19.3.6)]{EGA4-4}). The following lemma gives typical examples of such an extension $B/A$.

\begin{lem}\label{CI}
\begin{enumerate}
\item\label{CI-cdvr}
Let $K$ be a complete discrete valuation field with uniformizer $\pi$. Let $L$ be a finite extension of $K$ and $n$ be a positive integer. Then $\oel$ ({\it resp.} $\oel/(\pi^n)$) is a finite flat $\okey$-algebra ({\it resp.} $\okey/(\pi^n)$-algebra) which is relatively of complete intersection. 
\item\label{CI-tdvr}
Let $A$ and $B$ be truncated discrete valuation rings such that $B$ is a finite flat $A$-algebra. Then the $A$-algebra $B$ is relatively of complete intersection.
\end{enumerate}
\end{lem}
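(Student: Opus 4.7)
The plan is to deduce both parts from the standard criterion that a flat, finitely presented morphism is relatively of complete intersection if and only if every geometric fiber is a local complete intersection at each of its points. Flatness is automatic in each case: $\oel$ is $\pi$-torsion free of finite type over the complete DVR $\okey$, and $B$ is flat over $A$ by hypothesis. The task therefore reduces to exhibiting each geometric fiber as a complete intersection over the appropriate residue field.

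For part (\ref{CI-cdvr}), the generic fiber $\oel \otimes_{\okey} K = L$ is a field, trivially a complete intersection, so only the closed fiber $\oel/\pi \oel$ needs attention. First I would observe that this ring is an Artinian local $k$-algebra with residue field $k_L$; being Artinian it is complete, and of characteristic $p$ equal to that of its residue field, so Cohen's structure theorem supplies a coefficient field $k_L \hookrightarrow \oel/\pi\oel$. Writing $\pi = u \pi_L^e$ with $u \in \oel^\times$ and $e$ the ramification index of $L/K$ then gives the $k_L$-algebra isomorphism $\oel/\pi\oel \cong k_L[X]/(X^e)$, where $X$ corresponds to the uniformizer $\pi_L$. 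Next, to present $k_L$ itself as a complete intersection over $k$, I would realize $k_L/k$ as a tower of simple extensions $k = k_0 \subseteq \cdots \subseteq k_s = k_L$ with $k_i = k_{i-1}(\bar y_i)$, lift the coefficients of each minimal polynomial $\bar g_i \in k_{i-1}[Y_i]$ to $k[Y_1, \ldots, Y_{i-1}]$, and observe that the resulting polynomials are triangular and monic in the fresh variables $Y_i$, hence form a regular sequence. Composing with the previous step yields the presentation
\[
\oel/\pi\oel \cong k[Y_1, \ldots, Y_s, X]/(\bar g_1, \ldots, \bar g_s, X^e),
\]
whose defining $(s+1)$-term sequence is again triangular monic and therefore regular, proving that the closed fiber is a complete intersection over $k$. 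The claim for $\oel/(\pi^n)$ over $\okey/(\pi^n)$ then follows by the stability of ``relatively of complete intersection'' under base change.

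For part (\ref{CI-tdvr}) the identical strategy applies directly over the Artinian base $A$. Since $\Spec A$ is a single point, only the closed fiber $B/\bar\pi_A B$ needs to be analyzed, and it is a finite Artinian local $k_A$-algebra with residue field $k_B$ of characteristic $p$; applying Cohen's structure theorem and the tower presentation of $k_B/k_A$ exactly as above produces the analogous complete-intersection presentation over $k_A$. The main obstacle in both parts is the verification that the closed fibers admit complete-intersection presentations over their residue fields, which hinges on the applicability of Cohen's structure theorem (valid because the fibers are complete equicharacteristic local rings of characteristic $p$) together with the triangular tower presentation of an arbitrary finite field extension; once these are in place, the lemma follows formally from the fiber criterion for relative complete intersections and base-change stability.
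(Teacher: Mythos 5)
Your high-level strategy matches the paper's: both reduce to showing the special fiber is a complete intersection and then invoke the fiber criterion (EGA IV, Corollaire 19.3.8). Where the two proofs diverge is in the verification that the fiber is a complete intersection. The paper observes that $\oel$ is a complete regular local ring and $\pi$ is a regular element, so $\oel/(\pi)$ is a complete intersection ring directly from EGA's definition — a one-line argument with no appeal to Cohen. Your version instead builds an explicit presentation of the fiber via Cohen's structure theorem plus a tower presentation of $k_L/k$.

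The tower step, as written, has a flaw. Cohen's structure theorem gives you \emph{some} coefficient field $k_L \hookrightarrow \oel/\pi\oel$, but not one extending the given $k$-algebra structure when $k_L/k$ is inseparable; a $k$-linear coefficient field is only guaranteed when $k_L/k$ is formally smooth (separable). For a concrete failure, take $k=\bF_p(t)$, $\okey=W(k)$, and $\oel=W(k)[X]/((X^p-[t])^2-p)$ with $p$ odd: then $\oel/p\oel = k[X]/((X^p-t)^2)$ has residue field $k(t^{1/p})$ yet admits no $k$-algebra section of its residue map, since any lift $\alpha$ of $t^{1/p}$ satisfies $\alpha^p = X^p \neq t$ there. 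Consequently your composed presentation is a ring isomorphism but need not be a $k$-algebra isomorphism, so the claim that the fiber ``is a complete intersection over $k$'' is not fully established as stated. The conclusion survives anyway, because being a complete intersection is intrinsic: for a local ring essentially of finite type over a field, the absolute and relative notions coincide (EGA IV, 19.3.2). Thus the ring isomorphism $\oel/\pi\oel \cong k_L[X]/(X^e)$ you already extracted from Cohen suffices, and the entire tower step is superfluous once one cites that equivalence. The same comment applies to part (ii), where the paper simply notes that $B \otimes_A k$ is a truncated discrete valuation ring and hence a complete intersection ring.
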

\begin{proof}
Since $\oel$ is a complete Noetherian regular local ring and $\pi$ is a regular element, the ring $\oel/(\pi)$ is a ring of complete intersection (\cite[D\'{e}finition (19.3.1)]{EGA4-4}). Then the first assertion follows from the definition and \cite[Corollaire (19.3.8)]{EGA4-4}. The second assertion also follows from the definition, since $B\otimes_A k$ is a truncated discrete valuation ring, and thus a ring of complete intersection.
\end{proof}

\begin{dfn}[\cite{HT}, Definition 3.2] 
Let $B$ be a finite flat $A$-algebra which is relatively of complete intersection, $(K,\iota)$ be a lift of $A$ and $j\in \bQ\cap (0,m]$. We say that the ramification of $B/A$ is bounded by $j$ if
\[
\sharp \cF^j_{(K,\iota)}(B)=\rank_A(B).
\]
\end{dfn}

This condition {\it{a priori}} depends on the choice of a lift $(K,\iota)$ of $A$. However, the following corollary shows that it is in fact independent of the choice of a lift, for the case of $pA=0$.

\begin{cor}\label{ObjIndep}
Suppose $pA=0$. Let $j$ be a positive rational number satisfying $j\leq m$. Let $B$ be a finite flat $A$-algebra relatively of complete intersection. Then the condition that the ramification of $B/A$ is bounded by $j$ is independent of the choice of a lift $(K,\iota)$ of $A$.
\end{cor}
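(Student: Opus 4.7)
The proof is essentially immediate from Corollary \ref{FuncIndep}, so the plan is just to assemble the pieces. Given two lifts $(K_1,\iota_1)$ and $(K_2,\iota_2)$ of $A$, Corollary \ref{FuncIndep} (which is applicable because we are assuming $pA=0$) produces a natural isomorphism of functors
\[
\rho_{K_1,K_2}:\cF^j_{(K_1,\iota_1)} \to \cF^j_{(K_2,\iota_2)}
\]
valued in finite sets. Evaluating at the given finite flat $A$-algebra $B$ yields a bijection $\cF^j_{(K_1,\iota_1)}(B) \simeq \cF^j_{(K_2,\iota_2)}(B)$, hence the equality of cardinalities
\[
\sharp\, \cF^j_{(K_1,\iota_1)}(B) = \sharp\, \cF^j_{(K_2,\iota_2)}(B).
\]
Since the right-hand side of the defining equation $\sharp\, \cF^j_{(K,\iota)}(B) = \rank_A(B)$ does not depend on the lift, the condition is satisfied for $(K_1,\iota_1)$ if and only if it is satisfied for $(K_2,\iota_2)$.

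There is really no obstacle here: all the content has been absorbed into Corollary \ref{FuncIndep}, which in turn relied on Proposition \ref{compfunc} and ultimately on Theorem \ref{comparisonimperf}. The only thing to note is that the hypothesis $j\leq m$ in the definition matches the hypothesis in Corollary \ref{FuncIndep}, and that the assumption $pA=0$ is precisely what allows us to invoke the intermediate lift $(F,\boldsymbol{\iota})$ of equal characteristic $p$ that is used as a bridge between $(K_1,\iota_1)$ and $(K_2,\iota_2)$ in the comparison.
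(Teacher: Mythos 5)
Your proof is correct and matches the paper's own argument exactly: the paper's proof of Corollary \ref{ObjIndep} is the single line ``This follows immediately from Corollary \ref{FuncIndep},'' and your expansion simply spells out the evaluation-at-$B$ and cardinality comparison that is implicit there. Nothing more is needed.
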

\begin{proof}
This follows immediately from Corollary \ref{FuncIndep}.
\end{proof}

Next we study a relationship between ramification of finite flat $A$-algebras and of finite flat algebras over complete discrete valuation rings. Let $K$ be a complete discrete valuation field of residue characteristic $p>0$ with algebraic closure $\Kbar$. For any finite flat $\okey$-algebra $\tilde{B}$, put $\tilde{B}_K=\tilde{B}\otimes_{\okey}K$ and $\cF_K(\tilde{B})=\Hom_{K\text{-alg.}}(\tilde{B}_K,\Kbar)$. For any positive rational number $j$, let $\cF^j_K$ be the functor of (non-log) ramification theory for $K$ constructed in \cite[Subsection 3.1]{AS1}. It is a contravariant functor from the category of finite flat $\okey$-algebras to that of finite $G_K$-sets which is endowed with a natural map of finite $G_K$-sets
\[
\cF_K(\tilde{B})\to \cF^j_K(\tilde{B}).
\]
If the $\okey$-algebra $\tilde{B}$ is relatively of complete intersection, then this map is a surjection (\cite[Proposition 6.1]{AS1}). 

\begin{dfn}
For a finite flat $\okey$-algebra $\tilde{B}$ relatively of complete intersection, we say that the (non-log) ramification of $\tilde{B}/\okey$ is bounded by $j$ if
\[
\sharp \cF^j_K(\tilde{B})=\rank_{\okey}(\tilde{B}).
\]
For a finite extension $L/K$, we say that the ramification of $L/K$ is bounded by $j$ if the ramification of $\oel/\okey$ is bounded by $j$.
\end{dfn}
Note that, if the $K$-algebra $\tilde{B}_K$ is etale, then it is equivalent to the definition given in \cite[Definition 6.3]{AS1}. Moreover, if the ramification of $\tilde{B}/\okey$ is bounded by some $j$, then the $K$-algebra $\tilde{B}_K$ is etale. 

Let $A$ be a truncated discrete valuation ring of length $m$ and $(K,\iota)$ be a lift of $A$. For any finite flat $\okey$-algebra $\tilde{B}$, the $A$-algebra $B=\tilde{B}\otimes_{\okey,\iota}A$ is a finite flat $A$-algebra. If the $\okey$-algebra $\tilde{B}$ is relatively of complete intersection, then the $A$-algebra $B$ is also relatively of complete intersection. For any positive rational number $j\leq m$, \cite[Lemma 1]{Ha} yields a natural isomorphism of finite $G_K$-sets
\[
\cF^j_K(\tilde{B})\simeq \cF_{(K,\iota)}^j(B).
\]

\begin{lem}\label{CIlift}
Let $K$ be a complete discrete valuation field of residue characteristic $p>0$ with uniformizer $\pi$. Let $n$ be a positive integer and $j$ be a positive rational number satisfying $j\leq n$.
\begin{enumerate}
\item\label{CIlift-lift}{(\cite{HT}, Corollary 3.5)}
Let $\tilde{B}$ be a finite flat $\okey$-algebra which is relatively of complete intersection. Put $A=\okey/(\pi^n)$ and $B=\tilde{B}/(\pi^n)$. Then the ramification of $\tilde{B}/\okey$ is bounded by $j$ if and only if the ramification of $B/A$ is bounded by $j$.
 
\item\label{CIlift-sep}
Let $L$ be a finite extension of $K$. Put $A=\okey/(\pi^n)$ and $B=\oel/(\pi^n)$. If the ramification of $B/A$ is bounded by $j$, then $L$ is a separable extension of $K$.
\end{enumerate} 
\end{lem}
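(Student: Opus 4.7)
For part (\ref{CIlift-lift}), the strategy is to reduce the statement to an equality of cardinalities via two standard facts already recalled in the excerpt. First, \cite[Lemma 1]{Ha} yields a natural isomorphism of finite $G_K$-sets $\cF^j_K(\tilde{B}) \simeq \cF^j_{(K,\iota)}(B)$, where $B = \tilde{B} \otimes_{\okey,\iota} A$. Second, since $\tilde{B}$ is finite flat over $\okey$, the base change $B = \tilde{B}/(\pi^n)$ is finite free over $A = \okey/(\pi^n)$, with $\rank_A(B) = \rank_{\okey}(\tilde{B})$. Combining these two facts, the equation $\sharp\cF^j_K(\tilde{B}) = \rank_{\okey}(\tilde{B})$ holds if and only if $\sharp\cF^j_{(K,\iota)}(B) = \rank_A(B)$, which is precisely the claimed equivalence.

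For part (\ref{CIlift-sep}), the plan is to apply part (\ref{CIlift-lift}) to $\tilde{B} = \oel$. By Lemma \ref{CI} (\ref{CI-cdvr}), the $\okey$-algebra $\oel$ is finite flat and relatively of complete intersection, so part (\ref{CIlift-lift}) gives that the ramification of $\oel/\okey$ is bounded by $j$, that is, $\sharp\cF^j_K(\oel) = [L:K]$. Next I would invoke the natural surjection $\cF_K(\oel) \twoheadrightarrow \cF^j_K(\oel)$ recalled in the excerpt, which is available precisely because $\oel/\okey$ is relatively of complete intersection (\cite[Proposition 6.1]{AS1}); this yields $\sharp\cF_K(\oel) \geq [L:K]$. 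Finally, since $\cF_K(\oel) = \Hom_{K\text{-alg.}}(L,\Kbar)$ has cardinality equal to the separable degree $[L:K]_s$, and $[L:K]_s \leq [L:K]$, the two inequalities force $[L:K]_s = [L:K]$, so $L/K$ is separable.

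No genuine technical obstacle arises: the whole argument is a counting game, and the Hattori isomorphism in part (\ref{CIlift-lift}) together with the surjection $\cF_K \twoheadrightarrow \cF^j_K$ from Abbes--Saito do all the heavy lifting. The one point to keep track of is the consistent use of the complete intersection hypothesis, which is needed both to apply \cite[Proposition 6.1]{AS1} and to make the rank identification in part (\ref{CIlift-lift}) meaningful; in both appearances the hypothesis is supplied by Lemma \ref{CI} (\ref{CI-cdvr}).
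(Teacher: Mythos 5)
Your proof is correct and follows essentially the same route as the paper: part (\ref{CIlift-lift}) is the equality $\sharp\cF^j_K(\tilde{B}) = \sharp\cF^j_{(K,\iota)}(B)$ from \cite[Lemma 1]{Ha} combined with the rank identification, and part (\ref{CIlift-sep}) is the counting chain $[L:K] \geq \sharp\cF_K(\oel) \geq \sharp\cF^j_K(\oel) = \sharp\cF^j_{(K,\iota)}(B) = [L:K]$ using the surjection from \cite[Proposition 6.1]{AS1} (which requires the complete-intersection hypothesis supplied by Lemma \ref{CI}). The only cosmetic difference is that you route the last equality through part (\ref{CIlift-lift}) explicitly while the paper invokes \cite[Lemma 1]{Ha} inline; the content is identical.
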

\begin{proof}
The first assertion follows from the definition and \cite[Lemma 1]{Ha}. Let us show the second assertion. By Lemma \ref{CI} (\ref{CI-cdvr}), the finite flat $\okey$-algebra $\oel$ is relatively of complete intersection. By \cite[Proposition 6.1]{AS1}, we have a surjection
\[
\cF_K(\oel)\to \cF^j_K(\oel).
\]
Hence we obtain the inequality
\[
[L:K]\geq \sharp \cF_K(\oel) \geq \sharp \cF^j_K(\oel)=\sharp \cF^j_{(K,\iota)}(B)=[L:K],
\]
which implies that the extension $L/K$ is separable. 
\end{proof}

\begin{cor}\label{LEnonlog}
Let $L_1/K_1$ and $L_2/K_2$ be extensions of complete discrete valuation fields of residue characteristic $p>0$. Let $\pi_{K_i}$ be a uniformizer of $K_i$. Let $e(K_i)$ be as in Section \ref{intro}. Let $m$ be a positive integer satisfying $m \leq \min_i e(K_i)$. Suppose that we have compatible isomorphisms of rings $\cO_{K_1}/(\pi_{K_1}^m)\simeq \cO_{K_2}/(\pi_{K_2}^m)$ and $\cO_{L_1}/(\pi_{K_1}^m)\simeq \cO_{L_2}/(\pi_{K_2}^m)$. Then, for any positive rational number $j\leq m$, the ramification of $L_1/K_1$ is bounded by $j$ if and only if the ramification of $L_2/K_2$ is bounded by $j$.
\end{cor}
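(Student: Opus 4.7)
The plan is to package the statement as an immediate consequence of Corollary \ref{ObjIndep}. For $i = 1, 2$, set $A_i := \cO_{K_i}/(\pi_{K_i}^m)$ and $B_i := \cO_{L_i}/(\pi_{K_i}^m)$. Since $m \leq e(K_i)$, we have $p A_i = 0$ in both characteristics. The given compatible isomorphisms identify the pair $(A_1, B_1)$ with the pair $(A_2, B_2)$; let $A$ denote the common truncated discrete valuation ring, $B$ its common finite flat extension, and fix a $k$-algebra structure on $A$, which is possible since the extension $k/\bF_p$ is separable. By Lemma \ref{CI}(\ref{CI-tdvr}), $B/A$ is relatively of complete intersection.

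For each $i$, the surjection $\iota_i : \cO_{K_i} \to A_i = A$ constitutes a lift of $A$ in the sense of Subsection \ref{subseclift}, and the finite flat $\cO_{K_i}$-algebra $\cO_{L_i}$ is relatively of complete intersection by Lemma \ref{CI}(\ref{CI-cdvr}). Applying Lemma \ref{CIlift}(\ref{CIlift-lift}) with $\tilde{B} = \cO_{L_i}$ and $n = m$ then yields the equivalence
\[
\text{ramification of } L_i/K_i \text{ bounded by } j \iff \text{ramification of } B/A \text{ bounded by } j \text{ with respect to } (K_i, \iota_i).
\]
Corollary \ref{ObjIndep}, whose hypotheses $pA = 0$ and relative complete intersection of $B/A$ we have just verified, asserts that the right-hand condition is independent of the choice of lift of $A$. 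Consequently it holds for $i = 1$ if and only if it holds for $i = 2$, and the corollary follows.

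There is essentially no obstacle in the present statement: all the substantive work resides upstream, in Theorem \ref{comparisonimperf} and its repackaging into the lift-independent Corollary \ref{ObjIndep}, which in turn rests on the perfectoid comparison of Section \ref{comp}. The role of the present corollary is purely to translate a hypothesis that is naturally formulated on the truncated quotients $\cO_{K_i}/(\pi_{K_i}^m)$ and $\cO_{L_i}/(\pi_{K_i}^m)$ into a conclusion about the original extensions $L_i/K_i$, using Lemma \ref{CIlift}(\ref{CIlift-lift}) as the bridge. The only points requiring care are therefore the verification of $p A = 0$ (from $m \leq \min_i e(K_i)$) and of the complete intersection property for $B/A$ (automatic by Lemma \ref{CI}).
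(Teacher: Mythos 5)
Your proof is correct and takes essentially the same route as the paper: the paper's one-line argument invokes \cite[Lemma 1]{Ha} together with Corollary \ref{FuncIndep} to chain the equalities $\sharp \cF^j_{K_1}(\cO_{L_1})=\sharp \cF^j_{(K_1,\iota_1)}(B)=\sharp \cF^j_{(K_2,\iota_2)}(B)=\sharp \cF^j_{K_2}(\cO_{L_2})$, whereas you invoke the already-packaged Lemma \ref{CIlift}(\ref{CIlift-lift}) and Corollary \ref{ObjIndep}, which are proved from precisely those two ingredients. The auxiliary checks you spell out ($pA=0$ from $m\leq\min_i e(K_i)$, the relative-complete-intersection hypotheses via Lemma \ref{CI}) are correct and are implicitly used in the paper's version as well.
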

\begin{proof}
By \cite[Lemma 1]{Ha} and Corollary \ref{FuncIndep}, we obtain the equalities
\[
\sharp \cF^j_{K_1}(\cO_{L_1})=\sharp \cF^j_{(K_1,\iota_1)}(\cO_{L_1}/(\pi_{K_1}^m))=\sharp \cF^j_{(K_2,\iota_2)}(\cO_{L_2}/(\pi_{K_2}^m))=\sharp \cF^j_{K_2}(\cO_{L_2}),
\]
from which the corollary follows.
\end{proof}

%------------------------------------------------------------------------------------------------

%------------------------------------------------------------------------------------------------

\subsection{An equivalence of categories}\label{subsecequivcats}

Let $A$ be a truncated discrete valuation ring of length $m$ with residue field $k$ of characteristic $p>0$.

\begin{dfn}{(\cite[Section 2]{HT})}
A truncated discrete valuation ring $B$ is said as a finite extension of $A$ if it is a finite flat $A$-algebra for $m\geq 2$, and if it is a field which is a finite extension of $k$ for $m=1$.
\end{dfn}

Note that for any finite extension $B/A$ of truncated discrete valuation rings, the $A$-algebra $B$ is relatively of complete intersection by Lemma \ref{CI} (\ref{CI-tdvr}).

Any finite extension $B/A$ of truncated discrete valuation rings has the following lifting property, which is shown in the first part of the proof of \cite[Proposition 2.2]{HT}. This proposition also states that $L/K$ can be taken to be finite separable. However, the proof of this latter part seems to have a gap, since it is not clear in general that we have the equality $\frp'\cap R=\frq$ with their notation.

\begin{lem}\label{liftB}
Let $B$ be a finite flat $A$-algebra which is relatively of complete intersection and $(K,\iota)$ be a lift of $A$. Then there exist a finite flat $\okey$-algebra $\tilde{B}$ which is relatively of complete intersection and a cartesian diagram
\[
\xymatrix{
\okey \ar[r]\ar[d]_{\iota} & \tilde{B}\ar[d] \\
A \ar[r] & B.
}
\]
Moreover, if $B/A$ is a finite extension of truncated discrete valuation rings, then the $\okey$-algebra $\tilde{B}$ can be taken to be the ring of integers $\oel$ of a finite extension $L/K$.
\end{lem}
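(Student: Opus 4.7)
My plan is to build $\tilde{B}$ by lifting a complete-intersection presentation of $B$ over $A$ and $\pi$-adically completing, and then, in the moreover case, extract a DVR structure from the resulting local ring. Since $B$ is finite flat and relatively of complete intersection over $A$ with relative dimension zero, I can write $B = A[X_1,\dots,X_n]/(\bar{f}_1,\dots,\bar{f}_n)$ with $(\bar{f}_i)$ a regular sequence. Picking arbitrary lifts $f_i \in \okey[X_1,\dots,X_n]$ of the $\bar{f}_i$ and setting $R = \okey[X]/(f_1,\dots,f_n)$, I define $\tilde{B}$ to be the $\pi$-adic completion of $R$. The cartesian square follows immediately from
\[
\tilde{B}/\pi^m\tilde{B} \simeq R/\pi^m R = A[X_1,\dots,X_n]/(\bar{f}_1,\dots,\bar{f}_n) = B.
\]

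Next I verify that $\tilde{B}$ is finite, flat, and relatively of complete intersection over $\okey$. Finiteness follows from $\pi$-adic Nakayama: $\tilde{B}/\pi\tilde{B} = B/\mathfrak{m}_A B$ is finite-dimensional over $k$ (being a quotient of the finite $A$-algebra $B$), and $\tilde{B}$ is $\pi$-adically complete. For flatness, I claim $\pi$ is a nonzerodivisor on $R$. At any maximal ideal $\mathfrak{M}$ of $\okey[X]$ above $(\pi, f_1,\dots,f_n)$, the regular local ring $\okey[X]_{\mathfrak{M}}$ has dimension $n+1$, and the zero-dimensionality of $B\otimes_A k$ makes $(\pi, f_1,\dots,f_n)$ an $\mathfrak{M}$-primary ideal. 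So $\pi, f_1,\dots,f_n$ is a system of parameters, and hence a regular sequence; permuting in this Cohen--Macaulay ring, $f_1,\dots,f_n,\pi$ is also regular, so $\pi$ is a nonzerodivisor modulo $(f_1,\dots,f_n)$. Since $\pi$-adic completion preserves $\pi$-torsion-freeness, $\tilde{B}$ is flat over the DVR $\okey$, and the complete-intersection property is inherited from the presentation.

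For the moreover part, assume $B/A$ is a finite extension of truncated discrete valuation rings with uniformizer $\bar{\pi}_S$. Then $B$ is local, so $\tilde{B}/\pi\tilde{B} = B/\mathfrak{m}_A B$ is local, which forces $\tilde{B}$ (a finite flat $\okey$-algebra) to be local as well. Lift $\bar{\pi}_S$ to some $\pi_S \in \mathfrak{m}_{\tilde{B}}$. In $B$ we have $\bar{\pi} = \bar{u}\,\bar{\pi}_S^{e'}$ with $\bar{u} \in B^\times$ and $e'$ the relative ramification index; a standard unit correction (using $1 + \pi^m\tilde{B} \subset \tilde{B}^\times$) lifts this to $\pi = u\,\pi_S^{e'}$ for some $u \in \tilde{B}^\times$. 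In particular $\pi^m \in (\pi_S)$, and combined with the inclusion $\mathfrak{m}_{\tilde{B}} \subseteq (\pi_S) + \pi^m\tilde{B}$ (which follows from $\mathfrak{m}_B = (\bar{\pi}_S)$), this forces $\mathfrak{m}_{\tilde{B}} = (\pi_S)$. Hence $\tilde{B}$ is a one-dimensional regular local ring, i.e., a DVR, and $L = \Frac(\tilde{B})$ is a finite extension of $K$ with $\oel = \tilde{B}$.

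The main obstacle I anticipate is ensuring finiteness of $\tilde{B}$ over $\okey$: the naive lift $R$ can acquire spurious generic-fiber components (for instance, $\okey[X]/(X(1+\pi X))$ has the extra point $X = -1/\pi$ outside $\okey$) that destroy finiteness, so $\pi$-adic completion is essential to isolate a formal neighborhood of the special fiber. A secondary subtlety is that a finite flat local lift of a truncated DVR need not itself be a DVR without an explicit principal generator of the maximal ideal; the unit-correction lifting of $\bar{\pi} = \bar{u}\,\bar{\pi}_S^{e'}$ is precisely what produces such a generator and completes the moreover part.
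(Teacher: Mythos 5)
Your overall strategy—lift a complete-intersection presentation of $B$, $\pi$-adically complete to get $\tilde{B}$, and in the truncated-DVR case extract the DVR structure by normalizing a uniformizer—is a coherent alternative to the paper, which instead passes to the completed local ring of $\okey[X_1,\ldots,X_n]$ at the relevant maximal ideal and reads off the DVR property from dimension counting in $\mathfrak{m}_R/\mathfrak{m}_R^2$. Your unit-correction derivation of $\mathfrak{m}_{\tilde{B}}=(\pi_S)$ in the moreover part is a genuine and correct alternative (for $m\geq 2$; for $m=1$ the relation $\bar{\pi}=\bar{u}\bar{\pi}_S^{e'}$ is vacuous and the paper handles $m=1$ by a separate construction, but your setup gives $\mathfrak{m}_{\tilde{B}}=\pi\tilde{B}$ directly so this is a small omission rather than a real gap).

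The genuine gap is your opening claim that one can write
\[
B = A[X_1,\ldots,X_n]/(\bar{f}_1,\ldots,\bar{f}_n)
\]
with the $\bar{f}_i$ a \emph{global} regular sequence of length exactly $n$. The complete-intersection hypothesis is a local (indeed, \'etale-local or completed-local) statement: after localizing at the unique maximal ideal of $A[X]$ above $\mathfrak{m}_B$, the kernel of $A[X]\to B$ is generated by a regular sequence of length $n$. It does \emph{not} follow that the kernel ideal is generated by $n$ elements of $A[X]$ itself, and that number matters crucially for your flatness argument, which requires $(\pi,f_1,\ldots,f_n)$ to be a system of parameters in the $(n+1)$-dimensional regular ring $\okey[X]_{\mathfrak{M}}$. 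If the kernel is presented with $r>n$ generators, the lifted ring $R=\okey[X]/(f_1,\ldots,f_r)$ can acquire $\pi$-torsion: for instance with $A=k$, $B=k$, $n=1$, $p\neq 2$, take $\bar{f}_1=X+X^2$ and $\bar{f}_2=X-X^2$ (which do generate $(X)$), lift to $f_1=X+X^2$ and $f_2=X-X^2+\pi$; then $R=\okey[X]/(f_1,f_2)$ satisfies $\pi\cdot 1=0$ while $R/\pi R=k\neq 0$, so $\tilde{B}=\hat{R}=k$ fails to be flat over $\okey$. The paper sidesteps this entirely by completing at the maximal ideal and invoking \cite[Corollaire (19.3.5)]{EGA4-4} there; you would need either to do the same, or to first localize $\okey[X]$ away from a suitable $h\notin\mathfrak{M}$ so that the local regular sequence generates the kernel on the nose (the $\pi$-adic and $\mathfrak{M}$-adic completions of the resulting ring coincide, so the rest of your argument goes through), or to invoke a nontrivial generation theorem to produce an $n$-element presentation. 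As written, the claim is unsupported and the construction built on it can fail.
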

\begin{proof}
We present a proof for the convenience of the reader. We may assume that $B$ is local with maximal ideal $\mathfrak{m}_B$ and residue field $k_B$. By assumption, the ring $B\otimes_A k$ is a ring of complete intersection. Fix an $A$-algebra surjection
\[
\bar{\bA}=A[X_1,\ldots,X_n]\to B.
\]
Consider the surjection $\bA=\okey[X_1,\ldots,X_n]\to \bar{\bA}$ induced by $\iota$ and let $\mathfrak{m}$ ({\it resp.} $\bar{\mathfrak{m}}$) be the maximal ideal of $\bA$ ({\it resp.} $\bar{\bA}$) which is the inverse image of $\mathfrak{m}_B$. The completions of the local rings $\bar{\bA}_{\bar{\mathfrak{m}}}$ and $\bA_{\mathfrak{m}}$ are denoted by $\bar{R}$ and $R$, respectively. Let $\mathfrak{m}_R$ be the maximal ideal of the local ring $R$ and $\pi$ be a uniformizer of $\okey$.

The local ring $\bar{R}$ is a flat $A$-algebra such that $\bar{R}\otimes_A k$ is regular. Then \cite[Corollaire (19.3.5)]{EGA4-4} implies that the kernel $\bar{\mathfrak{n}}$ of the surjection $\bar{R}\to B$ is generated by a regular sequence $(\bar{g}_1,\ldots, \bar{g}_r)$ of $\bar{R}$. Let $g_i$ be a lift of $\bar{g}_i$ in $R$. Since the sequence $(\pi^m,g_1,\ldots,g_r)$ in the ideal $\mathfrak{m}_R$ is regular and the local ring $R$ is Noetherian, the sequence $(g_1,\ldots, g_r,\pi^m)$ is also regular by \cite[Corollaire (15.1.11)]{EGA4-1}. Then the $\okey$-algebra $\tilde{B}=R/(g_1,\ldots, g_r)$ is flat and $\pi$-adically complete. From the definition, we have $\tilde{B}/(\pi^m)\simeq B$ and thus the $\okey$-algebra $\tilde{B}$ is finite. By assumption, the $k$-algebra $\tilde{B}\otimes_{\okey}k$ is relatively of complete intersection. From \cite[Corollaire (19.3.8)]{EGA4-4}, we see that the $\okey$-algebra $\tilde{B}$ is also relatively of complete intersection.

Next we assume that $B/A$ is a finite extension of truncated discrete valuation rings. If $m=1$, then we can construct $L$ as in the lemma by taking an unramified extension of $K$ for the case where $B/k$ is separable, an extension generated by a lift of a generator of $B/k$ for the case where $B/k$ is purely inseparable of degree $p$ and by induction for the general case.

Let us consider the case of $m\geq 2$. We identify the residue field of $R$ with $k_B$. Put $\mathfrak{n}=\Ker(R\to B)=(\pi^m,g_1,\ldots,g_r)$. Since $m\geq 2$, the maximal ideal $\mathfrak{m}_B$ is not zero and we have an exact sequence of $k_B$-vector spaces
\[
0 \to \mathfrak{n}/(\mathfrak{n}\cap \mathfrak{m}_R^2)\to \mathfrak{m}_R/\mathfrak{m}_R^2 \to \mathfrak{m}_B/\mathfrak{m}_B^2\to 0,
\]
where the $k_B$-vector space on the middle ({\it resp.} right) term is of dimension $n+1$ ({\it resp.} one). The assumption $m\geq 2$ also implies that the $k_B$-vector space on the left term is generated by the images of $g_1,\ldots, g_r$, and thus $r\geq n$. Since the local ring $R$ is Cohen-Macaulay of dimension $n+1$, the maximal length of regular sequences in $\mathfrak{m}_R$ is $n+1$. Hence we obtain $r=n$ and the images of $g_1,\ldots, g_n$ in  $\mathfrak{n}/(\mathfrak{n}\cap \mathfrak{m}_R^2)$ are linearly independent over $k_B$. This means that $g_1,\ldots, g_n$ form a part of a system of regular parameters of the regular local ring $R$. Thus the local ring $\tilde{B}=R/(g_1,\ldots,g_n)$ is regular of dimension one, namely a discrete valuation ring. Since it is flat over $\okey$, the map $\okey\to\tilde{B}$ is an injection. Since it is $\pi$-adically complete, it is also a complete discrete valuation ring and the second assertion follows.
\end{proof}

Let $B/A$ and $B'/A$ be finite extensions of truncated discrete valuation rings. For any lift $(K,\iota)$ of $A$ and any positive rational number $j\leq m$, we define an equivalence relation $\sim_j$ on the set $\Hom_{A\text{-alg.}}(B,B')$ by
\[
\psi\sim_j \psi'\Leftrightarrow \cF^j_{(K,\iota)}(\psi)=\cF^j_{(K,\iota)}(\psi')
\]
for any $A$-algebra homomorphisms $\psi,\psi': B\to B'$. 

\begin{lem}\label{MorIndep}
The equivalence relation $\sim_j$ is independent of the choice of a lift $(K,\iota)$. Moreover, it is compatible with the composite.
\end{lem}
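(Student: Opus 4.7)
The plan is to deduce both assertions directly from Corollary \ref{FuncIndep} and the contravariant functoriality of $\cF^j_{(K,\iota)}$, so the proof should be short.

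For independence of the lift, fix two lifts $(K_1,\iota_1)$ and $(K_2,\iota_2)$ of $A$. I would apply the natural isomorphism $\rho_{K_1,K_2}$ of Corollary \ref{FuncIndep} to any $A$-algebra homomorphism $\psi:B\to B'$, obtaining a commutative diagram
\[
\xymatrix{
\cF^j_{(K_1,\iota_1)}(B') \ar[r]^{\cF^j_{(K_1,\iota_1)}(\psi)} \ar[d]_{\rho_{K_1,K_2}(B')}^{\wr} & \cF^j_{(K_1,\iota_1)}(B) \ar[d]^{\rho_{K_1,K_2}(B)}_{\wr} \\
\cF^j_{(K_2,\iota_2)}(B') \ar[r]_{\cF^j_{(K_2,\iota_2)}(\psi)} & \cF^j_{(K_2,\iota_2)}(B),
}
\]
and similarly for $\psi'$. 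Since the vertical arrows are bijections independent of $\psi$, the equality $\cF^j_{(K_1,\iota_1)}(\psi)=\cF^j_{(K_1,\iota_1)}(\psi')$ holds if and only if $\cF^j_{(K_2,\iota_2)}(\psi)=\cF^j_{(K_2,\iota_2)}(\psi')$ does. This proves that $\sim_j$ does not depend on the chosen lift.

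For compatibility with composition, suppose $\psi_1,\psi_1':B\to B'$ satisfy $\psi_1\sim_j\psi_1'$ and $\psi_2,\psi_2':B'\to B''$ satisfy $\psi_2\sim_j\psi_2'$. By the contravariant functoriality of $\cF^j_{(K,\iota)}$, we have
\[
\cF^j_{(K,\iota)}(\psi_2\circ\psi_1)=\cF^j_{(K,\iota)}(\psi_1)\circ\cF^j_{(K,\iota)}(\psi_2)
\]
and likewise for the primed maps. The right-hand sides coincide by hypothesis, so $\psi_2\circ\psi_1\sim_j\psi_2'\circ\psi_1'$, as required.

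I do not foresee any real obstacle: both claims are formal consequences of the fact that the functors $\cF^j_{(K,\iota)}$ attached to different lifts are naturally isomorphic. The only substantive input is Corollary \ref{FuncIndep}, whose proof already absorbed all the geometric content via the perfectoid comparison of Section \ref{comp}.
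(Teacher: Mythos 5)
Your proof is correct and follows essentially the same route as the paper: the first assertion is read off from the natural isomorphism of Corollary \ref{FuncIndep} (you spell out the naturality square, the paper packages it as a commutative triangle of $\Hom$ and $\Map$ sets), and the second is the same one-line contravariant-functoriality computation.
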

\begin{proof}
Let $(K',\iota')$ be another lift of $A$. By Corollary \ref{FuncIndep}, we have a commutative diagram
\[
\xymatrix{
\Hom_{A\text{-alg.}}(B,B')\ar[r]\ar[rd] & \Map(\cF^j_{(K,\iota)}(B'), \cF^j_{(K,\iota)}(B))\ar[d]^{\wr}\\
& \Map(\cF^j_{(K',\iota')}(B'), \cF^j_{(K',\iota')}(B)),
}
\]
where the vertical arrow is a bijection. This implies the first assertion. If $\psi_1\sim_j \psi'_1$ and $\psi_2\sim_j \psi'_2$, then
\begin{align*}
\cF^j_{(K,\iota)}(\psi_1\circ\psi_2)&=\cF^j_{(K,\iota)}(\psi_2)\circ\cF^j_{(K,\iota)}(\psi_1)\\
&=\cF^j_{(K,\iota)}(\psi'_2)\circ\cF^j_{(K,\iota)}(\psi'_1)=\cF^j_{(K,\iota)}(\psi'_1\circ\psi'_2)
\end{align*}
and the second assertion also follows.
\end{proof}

\begin{dfn}[\cite{HT}, Definition 3.3]\label{defFFP}
We define a category $\mathrm{FFP}^{\leqslant j}_A$ as follows: the object of $\mathrm{FFP}^{\leqslant j}_A$ is any finite extension $B/A$ of truncated discrete valuation rings such that the ramification of $B/A$ is bounded by $j$. The morphism of $\mathrm{FFP}^{\leqslant j}_A$ is defined by 
\[
\Hom_{\mathrm{FFP}^{\leqslant j}_A}(B,B')=\Hom_{A\text{-alg.}}(B,B')/\sim_j.
\]
\end{dfn}

\begin{thm}\label{CatIndep}
The category $\mathrm{FFP}^{\leqslant j}_A$ is independent of the choice of a lift $(K,\iota)$ of $A$.
\end{thm}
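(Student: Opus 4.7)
The plan is to observe that Theorem \ref{CatIndep} is a purely formal consequence of the object-level and morphism-level independence statements already established in this section, so the perfectoid input is entirely absorbed into the preceding corollaries and lemmas. Specifically, the category $\mathrm{FFP}^{\leqslant j}_A$ is built from three pieces of data: (a) a class of objects, namely finite extensions $B/A$ of truncated discrete valuation rings whose ramification is bounded by $j$; (b) for each ordered pair $(B,B')$, the underlying set $\Hom_{A\text{-alg.}}(B,B')$; and (c) an equivalence relation $\sim_j$ on the sets in (b). Only pieces (a) and (c) involve the lift $(K,\iota)$, since they are defined via the functor $\cF^j_{(K,\iota)}$; the set in (b) depends only on the $A$-algebra structure.

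First I would address (a). By Lemma \ref{CI} (\ref{CI-tdvr}), any finite extension $B/A$ of truncated discrete valuation rings is a finite flat $A$-algebra relatively of complete intersection, so Corollary \ref{ObjIndep} applies and asserts that the condition $\sharp \cF^j_{(K,\iota)}(B)=\rank_A(B)$ is independent of the choice of lift $(K,\iota)$. Hence any two lifts $(K_1,\iota_1)$ and $(K_2,\iota_2)$ single out exactly the same class of objects. Next I would address (c) by invoking Lemma \ref{MorIndep}, which both states that the equivalence relation $\sim_j$ on $\Hom_{A\text{-alg.}}(B,B')$ is the same for any lift and confirms that $\sim_j$ is compatible with composition; this ensures that the quotient sets of morphisms and the composition law agree for any two choices of lift.

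Combining these two inputs, the identity on objects and on representatives of morphisms defines a canonical isomorphism of categories between the versions of $\mathrm{FFP}^{\leqslant j}_A$ produced from any two lifts $(K_1,\iota_1)$ and $(K_2,\iota_2)$ of $A$. There is no genuine obstacle at this stage: all of the analytic content (the comparison of geometric connected components of tubular neighborhoods over $\bC$ and $\fR$ via perfectoid spaces) has already been used in proving Theorem \ref{comparisonimperf}, Proposition \ref{compfunc} and Corollary \ref{FuncIndep}. The only point worth highlighting is naturality: one must use that the isomorphism $\rho_{K_1,K_2}$ of Corollary \ref{FuncIndep} is natural in the finite flat $A$-algebra, so that the comparison of object classes in (a) is compatible with the comparison of equivalence relations in (c). This naturality is exactly what the proof of Lemma \ref{MorIndep} exploits, so the present theorem reduces to a short formal verification with no remaining hard step.
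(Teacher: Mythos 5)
Your proposal is correct and follows exactly the paper's argument: the paper's proof of Theorem \ref{CatIndep} is the one-line deduction from Corollary \ref{ObjIndep} (independence of the object class) and Lemma \ref{MorIndep} (independence of the equivalence relation $\sim_j$ and its compatibility with composition). Your write-up simply unpacks this into the three pieces of categorical data and is a faithful expansion of the intended argument.
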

\begin{proof}
This follows from Corollary \ref{ObjIndep} and Lemma \ref{MorIndep}.
\end{proof}

On the other hand, let $(K,\iota)$ be a lift of $A$ and $\mathrm{FE}_K^{\leqslant j}$ be the category of finite separable extensions $L/K$ such that its ramification is bounded by $j$. For $j\leq m$, we have a natural covariant functor $r_{(K,\iota)}^{\leqslant j}: \mathrm{FE}_K^{\leqslant j}\to \mathrm{FFP}^{\leqslant j}_A$ defined by $L\mapsto \oel/(\pi^m)$, where $\pi$ is a uniformizer of $K$ and we consider the ring $\oel/(\pi^m)$ as an $A$-algebra by the isomorphism $\okey/(\pi^m)\simeq A$ induced by $\iota$. Indeed, this functor is well-defined by Lemma \ref{CIlift} (\ref{CIlift-lift}) for $m \geq 2$ and by \cite[Proposition 6.9]{AS1} for $m=1$.
Then the following proposition is shown in \cite{HT}, which we present a proof for the convenience of the reader.

\begin{prop}[\cite{HT}, Corollary 1.2]\label{HTmain}
The functor $r_{(K,\iota)}^{\leqslant j}:\mathrm{FE}_K^{\leqslant j}\to \mathrm{FFP}^{\leqslant j}_A$ is an equivalence of categories.
\end{prop}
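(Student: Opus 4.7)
The plan is to verify essential surjectivity and fully faithfulness separately, using as the main input the natural isomorphism $\cF^j_K(\oel)\simeq \cF^j_{(K,\iota)}(\oel/(\pi^m))$ from \cite[Lemma 1]{Ha} together with classical Galois theory.

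For essential surjectivity, I would start with any object $B$ of $\mathrm{FFP}^{\leqslant j}_A$. The second part of Lemma \ref{liftB} supplies a finite extension $L/K$ of complete discrete valuation fields with an isomorphism $\oel/(\pi^m)\simeq B$ of $A$-algebras. Lemma \ref{CIlift}\,(\ref{CIlift-lift}) then shows that the ramification of $\oel/\okey$ is bounded by $j$, and Lemma \ref{CIlift}\,(\ref{CIlift-sep}) forces $L/K$ to be separable. Hence $L$ lies in $\mathrm{FE}_K^{\leqslant j}$ and maps to $B$ under $r^{\leqslant j}_{(K,\iota)}$.

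For fully faithfulness, fix $L,L'\in\mathrm{FE}_K^{\leqslant j}$, put $B=\oel/(\pi^m)$ and $B'=\oel'/(\pi^m)$, and observe that since the ramification of $L/K$ is bounded by $j$ one has $\sharp\cF^j_K(\oel)=[L:K]=\sharp\cF_K(\oel)$, so the $G_K$-equivariant surjection of \cite[Proposition 6.1]{AS1} is a bijection $\Hom_K(L,\Ksep)=\cF_K(\oel)\simeq\cF^j_K(\oel)$. Composing with \cite[Lemma 1]{Ha} yields $G_K$-equivariant identifications $\Hom_K(L,\Ksep)\simeq \cF^j_{(K,\iota)}(B)$, natural in $L$, and similarly for $L'$ and $B'$. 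Classical Galois theory, applied to the separable field extensions $L/K$ and $L'/K$, provides a further bijection between $\Hom_{K\text{-alg.}}(L,L')$ and $\Hom_{G_K}(\Hom_K(L',\Ksep),\Hom_K(L,\Ksep))$. Assembling these pieces gives a commutative square
\[
\xymatrix{
\Hom_{K\text{-alg.}}(L,L') \ar[r]^-{\sim}\ar[d]_{r^{\leqslant j}_{(K,\iota)}} & \Hom_{G_K}(\Hom_K(L',\Ksep),\Hom_K(L,\Ksep)) \ar[d]^{\wr} \\
\Hom_{A\text{-alg.}}(B,B')/{\sim_j} \ar[r] & \Hom_{G_K}(\cF^j_{(K,\iota)}(B'),\cF^j_{(K,\iota)}(B)),
}
\]
in which the top and right arrows are bijections and the bottom arrow is injective by the very definition of $\sim_j$. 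A short diagram chase then forces the left vertical arrow to be bijective: surjectivity follows from surjectivity of the diagonal composite combined with injectivity of the bottom arrow, and injectivity from injectivity of the composite.

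The main obstacle will be verifying commutativity of this square, namely that the identification $\cF^j_K(\oel)\simeq \cF^j_{(K,\iota)}(B)$ intertwines the action of a $K$-algebra map $\varphi:L\to L'$ with that of its reduction $\psi=\varphi\bmod\pi^m:B\to B'$. This is however essentially automatic from the explicit construction of $\cF^j_{(K,\iota)}$ by tubular neighborhoods described in Subsection \ref{Hatt}: after choosing systems of generators $Z$, $W$ of $\oel$, $\oel'$ over $\okey$ with $\varphi(Z)\subseteq W$, any lift $\Psi:\okey[X]\to\okey[Y]$ of $\varphi$ simultaneously provides a lift of $\psi$, so the morphisms of $K$-affinoid varieties $X^j_K(\oel',W)\to X^j_K(\oel,Z)$ and $X^j_K(B',\bar W)\to X^j_K(B,\bar Z)$ in the sense of Lemma \ref{func} literally coincide under the identification of their underlying spaces.
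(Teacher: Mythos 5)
Your proposal is correct and follows essentially the same route as the paper: Lemma \ref{liftB} together with Lemma \ref{CIlift} for essential surjectivity, and a diagram chase combining classical Galois theory, the bijectivity of $\cF_K(\oel)\to\cF^j_K(\oel)$ under the bounded-ramification hypothesis, \cite[Lemma 1]{Ha}, and the definition of $\sim_j$ for full faithfulness. The only cosmetic difference is how the commutative square is arranged (the paper factors through $\Hom_{\okey\text{-alg.}}(\oel,\cO_{L'})$ and $\Map_{G_K}(\cF^j_K,\cF^j_K)$ before passing to the truncated side), and your remark that a lift $\Psi$ of $\varphi$ is simultaneously a lift of $\psi=\varphi\bmod\pi^m$ is precisely the compatibility the paper leaves implicit.
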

\begin{proof}
First we show that the functor is essentially surjective. Let $B$ be an object of the category $\mathrm{FFP}^{\leqslant j}_A$. By Lemma \ref{liftB}, there exists a finite extension $L/K$ satisfying $\oel \otimes_{\okey, \iota}A\simeq B$. By Lemma \ref{CIlift}, the extension $L/K$ is separable and its ramification is bounded by $j$.

For the full faithfulness, let $L$ and $L'$ be objects of the category $\mathrm{FE}_K^{\leqslant j}$. Put $B=\oel/(\pi^m)$ and $B'=\cO_{L'}/(\pi^m)$. Then we have a commutative diagram
\[
\xymatrix{
\Hom_{K\text{-alg.}}(L,L')\ar[d]_{\wr} &  \\
\Hom_{\okey\text{-alg.}}(\oel,\cO_{L'}) \ar[r]\ar[d]_{\wr} & \Hom_{A\text{-alg.}}(B,B')\ar[d] \\
\Map_{G_K}(\cF^j_K(\cO_{L'}),\cF^j_K(\oel)) \ar[r]_-{\sim} & \Map_{G_K}(\cF^j_{(K,\iota)}(B'),\cF^j_{(K,\iota)}(B)).
}
\]
Here $\Map_{G_K}$ means the set of morphisms of finite $G_K$-sets. Note that the left vertical arrows are bijections, since the ramifications of $L/K$ and $L'/K$ are bounded by $j$. Moreover, the lower horizontal arrow is also a bijection by \cite[Lemma 1]{Ha}. The full faithfulness follows from this diagram and the definition of the equivalence relation $\sim_j$.
\end{proof}

\begin{cor}\label{equivcats}
Let $K_1$ and $K_2$ be complete discrete valuation fields with residue fields $k_1$ and $k_2$ of characteristic $p>0$, respectively. Let $e(K_i)$ be as in Section \ref{intro}. Let $j$ be a positive rational number satisfying $j\leq \min_i e(K_i)$. Suppose that the fields $k_1$ and $k_2$ are isomorphic to each other. Then there exists an equivalence of categories
\[
\mathrm{FE}_{K_1}^{\leqslant j} \simeq \mathrm{FE}_{K_2}^{\leqslant j}.
\]
In particular, there exists an isomorphism of topological groups
\[
G_{K_1}/G_{K_1}^j\simeq G_{K_2}/G_{K_2}^j,
\]
where $G_{K_i}^j$ is the $j$-th (non-log) upper ramification subgroup of $G_{K_i}$ (\cite[Section 3]{AS1}).
\end{cor}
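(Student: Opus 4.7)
The plan is to realize both $\mathcal{O}_{K_1}$ and $\mathcal{O}_{K_2}$ as lifts of a common truncated discrete valuation ring, and then chain together the equivalences of categories already available. Let $m$ be the smallest positive integer with $m\geq j$; since $\min_i e(K_i)$ is an integer and $j\leq\min_i e(K_i)$, we have $m\leq \min_i e(K_i)$. Set $A_i=\mathcal{O}_{K_i}/(\pi_{K_i}^m)$. Each $A_i$ is a truncated discrete valuation ring of length $m$ with residue field $k_i$ and with $pA_i=0$, and the natural surjection $\iota_i:\mathcal{O}_{K_i}\to A_i$ is a lift of $A_i$ in the sense of Subsection \ref{subseclift}.

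To exhibit a ring isomorphism $A_1\simeq A_2$, I would apply \cite[Th\'eor\`eme (19.6.1)]{EGA4-1} to pick sections $k_i\to A_i$ of the reduction map (available because $k_i/\mathbb{F}_p$ is separable), fix uniformizers $\bar\pi_i$, and then invoke the observation from Subsection \ref{subseclift} that the map $k_i[[u]]/(u^m)\to A_i$, $u\mapsto \bar\pi_i$, is an isomorphism of $k_i$-algebras. Composing with the given isomorphism $k_1\simeq k_2$ yields a ring isomorphism $A_1\simeq A_2$. This isomorphism tautologically induces an equivalence of categories
\[
\mathrm{FFP}^{\leqslant j}_{A_1}\simeq \mathrm{FFP}^{\leqslant j}_{A_2},
\]
because both the class of objects and the equivalence relation $\sim_j$ on morphisms are intrinsic to the ring $A$: Corollary \ref{ObjIndep} handles the objects and Lemma \ref{MorIndep} handles the morphisms, which together are the content of Theorem \ref{CatIndep}.

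Combining this with the equivalences $r_{(K_i,\iota_i)}^{\leqslant j}:\mathrm{FE}_{K_i}^{\leqslant j}\simeq \mathrm{FFP}^{\leqslant j}_{A_i}$ of Proposition \ref{HTmain} delivers the desired equivalence $\mathrm{FE}_{K_1}^{\leqslant j}\simeq \mathrm{FE}_{K_2}^{\leqslant j}$. For the final assertion, the quotient $G_{K_i}/G_{K_i}^j$ identifies with the Galois group of the compositum in $K_i^{\mathrm{sep}}$ of all finite Galois extensions whose ramification is bounded by $j$; by Grothendieck--Galois theory applied (after enlargement to finite \'etale $K_i$-algebras whose field factors lie in $\mathrm{FE}_{K_i}^{\leqslant j}$) with fiber functor $L\mapsto \Hom_{K_i}(L,\bar{K}_i)$, the profinite group $G_{K_i}/G_{K_i}^j$ is reconstructed from the category $\mathrm{FE}_{K_i}^{\leqslant j}$, so the categorical equivalence lifts to an isomorphism of topological groups. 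The only genuine subtlety is the one already embodied in Theorem \ref{CatIndep}: that $\mathrm{FFP}^{\leqslant j}_A$ really depends only on the ring $A$, so that transport along $A_1\simeq A_2$ is well-defined; everything else is bookkeeping.
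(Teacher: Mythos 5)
Your proof is correct and takes essentially the same route as the paper: reduce modulo $\pi_{K_i}^m$ to land in truncated DVRs $A_i$ killed by $p$, observe $A_i\simeq k_i[u]/(u^m)$ and hence $A_1\simeq A_2$ via $k_1\simeq k_2$, transport along this isomorphism using Theorem \ref{CatIndep}, and finish with Proposition \ref{HTmain} and the Galois-theoretic reconstruction as in Deligne's (3.5.1). The only cosmetic difference is your choice $m=\lceil j\rceil$ versus the paper's $m=\min_i e(K_i)$; both satisfy $j\leq m\leq\min_i e(K_i)$, which is all that is needed.
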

\begin{proof}
Put $m=\min_i e(K_i)$. Let $\pi_{K_i}$ be a uniformizer of $K_i$. Note that if $\ch(K_i)=0$, then the ring $\cO_{K_i}$ can be considered as a finite totally ramified extension of a Cohen ring of $k_i$. Thus the ring $A_i=\cO_{K_i}/(\pi_{K_i}^m)$ is non-canonically isomorphic to $k_i[u]/(u^m)$. Hence the categories $\mathrm{FFP}^{\leqslant j}_{A_1}$ and $\mathrm{FFP}^{\leqslant j}_{A_2}$ are equivalent, and the first assertion follows from Proposition \ref{HTmain}. The second assertion can be shown verbatim as the proof of \cite[(3.5.1)]{De}.
\end{proof}

%------------------------------------------------------------------------------------------------

%------------------------------------------------------------------------------------------------

\section{Log ramification}

In this section, we prove a variant of Proposition \ref{compfunc} for log ramification (\cite[Subsection 3.2]{AS1}). 

Let $K$ be a complete discrete valuation field with residue field $k$ of characteristic $p>0$ and uniformizer $\pi$. Let $\Kbar$ be an algebraic closure of $K$. Let $L$ be a finite extension of $K$ with residue field $k_L$, uniformizer $\pi_L$ and additive valuation $v_L$ which is normalized as $v_L(\pi_L)=1$. Let $e_{L/K}$ be the relative ramification index of the extension $L/K$. Let $Z=(z_1,\ldots,z_n)$ be a system of finite generators of the $\okey$-algebra $\oel$ and $P$ be a subset of $\{1,\ldots, n\}$ such that the set $\{z_i\mid i\in P\}$ contains a uniformizer of $\oel$ and does not contain the zero element. Such a pair $(Z,P)$ is referred to as a log system of generators of the $\okey$-algebra $\oel$. Put $e_i=v_L(z_i)$ for any $i\in P$. Consider the surjection $\okey[X]\to \oel$ associated to $Z$ and write its kernel as $(f_1,\ldots,f_r)$. For any $i\in P$, we choose a lift $g_i\in \okey[X]$ of the unit $z_i^{e_{L/K}}/\pi^{e_i}$ by this surjection. For any $i,i'\in P$, we also choose a lift $h_{i,i'}\in \okey[X]$ of the unit $z_{i'}^{e_i}/z_i^{e_{i'}}$. For any positive rational number $j$, the $j$-th log tubular neighborhood $X^j_{K,\log}(\oel,Z,P)$ of the $\okey$-algebra $\oel$ with respect to $(Z,P)$ is the $K$-affinoid variety defined as
\[
\left\{\begin{array}{l|l}
& |f_i(x)|\leq |\pi|^j\text{ for any }i,\\
x\in \Spv(K\la X\ra) & |(X_i^{e_{L/K}}-\pi^{e_i}g_i)(x)|\leq |\pi|^{j+e_i}\text{ for any }i\in P,\\
& |(X_{i'}^{e_i}-X_i^{e_{i'}}h_{i,i'})(x)|\leq |\pi|^{j+e_i e_{i'}/e_{L/K}}\text{ for any }i,i'\in P.
\end{array}\right\}
\]
Then the $K$-affinoid variety $X^j_{K,\log}(\oel,Z,P)$ is independent of the choice of $f_i$, $g_i$ and $h_{i,i'}$. Though $L/K$ is assumed to be separable in \cite[Subsection 3.2]{AS1}, a verbatim argument shows that the inverse system of finite $G_K$-sets
\[
\{\pi_0^\geom(X^j_{K,\log}(\oel,Z,P))\}_{(Z,P)}
\]
is constant also for the case where $L/K$ is not separable. This gives a contravariant functor
\[
\oel\mapsto \cF^j_{K,\log}(\oel)=\varprojlim_{(Z,P)}\pi_0^\geom(X^j_{K,\log}(\oel,Z,P))
\]
from the category of rings of integers of finite extensions of $K$ to that of finite $G_K$-sets. We have a natural map of finite $G_K$-sets
\[
\cF_K(\oel)\to \cF^j_{K,\log}(\oel),
\]
which is a surjection if the finite extension $L/K$ is separable (\cite[Proposition 9.3 (i)]{AS1}). 

\begin{lem}\label{sepinsep}
Let $L/K$ be a finite extension and $M$ be the separable closure of $K$ in $L$. Then we have
\[
\sharp \cF^j_{K,\log}(\oel)=\sharp \cF^j_{K,\log}(\oem).
\]
\end{lem}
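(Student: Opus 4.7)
The plan is to first reduce to a nontrivial purely inseparable situation. If $\ch(K)=0$, then every finite extension of $K$ is separable, so $M=L$ and the identity holds trivially. Assume therefore $\ch(K)=p$. Then $L/M$ is a purely inseparable extension of degree $p^n$ for some $n\geq 0$, and the restriction map $\Hom_K(L,\Kbar)\to \Hom_K(M,\Kbar)$ is a $G_K$-equivariant bijection, since $\Kbar$ contains a unique purely inseparable closure of $M$.

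Next I would set up compatible log systems of generators. Choose a log system $(Z_M,P_M)$ of $\oem$ such that $Z_M$ contains a uniformizer $\pi_M$ of $\oem$. Write $\oel = \oem[y_1,\ldots,y_{n'}]$ where one of the $y_i$'s is a uniformizer $\pi_L$ of $\oel$, with each $y_i$ satisfying a relation of the form $y_i^{p^{m_i}}=a_i$ for some $a_i\in\oem$ (using that $L/M$ is purely inseparable of $p$-power degree). Let $Z_L=Z_M\cup\{y_1,\ldots,y_{n'}\}$, and let $P_L=P_M\cup\{\text{index of }\pi_L\}$; this gives a log system of generators of $\oel$ over $\okey$. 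The projection onto the $Z_M$-coordinates induces, by construction, a morphism of $K$-affinoid varieties
\[
\Pi\colon X^j_{K,\log}(\oel,Z_L,P_L)\longrightarrow X^j_{K,\log}(\oem,Z_M,P_M).
\]
One then checks $\Pi$ is $G_K$-equivariant and surjective on $\Kbar$-points: any $\Kbar$-point of the target corresponds to an approximate $\okey$-homomorphism $\oem\to\okbar/\pi^j$, which by purely inseparability extends (essentially uniquely, modulo roots of unity) to an approximate homomorphism from $\oel$.

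The heart of the argument is to show that $\Pi$ induces a bijection on $\pi_0^\geom$. Surjectivity on components follows from surjectivity of $\Pi$ on $\Kbar$-points. For injectivity one shows that every geometric fiber of $\Pi$ is connected. Fix an honest $\Kbar$-point $\bar{\alpha}\in X^j_{K,\log}(\oem,Z_M,P_M)(\Kbar)$ (which is dense in each component); the fiber is cut out inside the polydisc by the defining relations expressing each $y_i$ via its relation $y_i^{p^{m_i}}=a_i(\bar\alpha)$ together with the log conditions relating $y_i$ to $\pi_L$ and the $Z_M$-coordinates. After choosing $p^{m_i}$-th roots of $a_i(\bar\alpha)$ in $\okbar$, an argument parallel to Lemma \ref{Uconn} (replacing $p^l$ by $p^{m_i}$) converts the condition $|X_i^{p^{m_i}}-a_i(\bar\alpha)|\leq|\pi|^{j+\text{shift}}$ into $|X_i-\zeta \cdot a_i(\bar\alpha)^{1/p^{m_i}}|\leq|\pi|^{(j+\text{shift})/p^{m_i}}$ for a uniquely determined choice of $p^{m_i}$-th root of unity $\zeta$ (the choice being forced by the remaining log conditions involving $\pi_L$), so the fiber is a polydisc, hence geometrically connected.

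The main obstacle I expect is the careful bookkeeping of the log conditions: the data $e_{L/K}$ and $e_{M/K}$ differ by a $p$-power factor, and the various polynomials $g_i$ and $h_{i,i'}$ entering the definition of the log tubular neighborhood for $\oel$ need to be chosen compatibly with those for $\oem$ so that the projection map is genuinely defined. Once the polydisc identification of each fiber is justified (and surjectivity of $\Pi$ established), the equality $\sharp\cF^j_{K,\log}(\oel)=\sharp\cF^j_{K,\log}(\oem)$ follows by taking $\pi_0^\geom$, because a surjection of $K$-affinoid varieties with geometrically connected fibers induces a bijection on $\pi_0^\geom$.
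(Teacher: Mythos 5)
The critical gap is that your projection $\Pi$ onto the $Z_M$-coordinates is not a well-defined morphism from $X^j_{K,\log}(\oel,Z_L,P_L)$ to $X^j_{K,\log}(\oem,Z_M,P_M)$, and this is not a matter of ``careful bookkeeping'' that a compatible choice of $g_i$ and $h_{i,i'}$ can fix. The exponents in the log conditions are tied to $v_L$ and $e_{L/K}$, which scale by the purely inseparable degree relative to $v_M$ and $e_{M/K}$. Take the basic case $[L:M]=p$ totally ramified, with $P_M=\{n\}$, $z_n=\pi_M$, $\pi_L^p=\pi_M$, and $j>0$. If you keep the index $n$ in $P_L$ and choose compatibly $g_n^L=(g_n^M)^p$, the $\oel$-side log condition on $X_n$ is $|(X_n^{e_{M/K}}-\pi g_n^M)(x)|^p\leq |\pi|^{j+p}$, i.e.\ $|(X_n^{e_{M/K}}-\pi g_n^M)(x)|\leq|\pi|^{j/p+1}$. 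If instead you drop $n$ from $P_L$, combining the log condition on $X_{n+1}$ (for $\pi_L$) with the algebraic relation $|X_{n+1}^p-X_n|\leq|\pi|^j$ only gives $|(X_n^{e_{M/K}}-\pi g_n^M)(x)|\leq|\pi|^j$. In either variant the bound falls strictly short of $|\pi|^{j+1}$, which is what membership in $X^j_{K,\log}(\oem,Z_M,P_M)$ requires. So $\Pi$ does not land in the target, and the surjectivity-plus-connected-fibers argument you build on it never gets started. (Your fiber-connectedness observation itself is fine in characteristic $p$, where the $p$-power roots of unity are trivial and your invocation of a Lemma \ref{Uconn}-type bound simplifies; the problem is upstream.)

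The paper sidesteps this structurally: rather than augmenting $Z_M$ by the $y_i$'s and projecting, it reduces to purely inseparable steps of degree $p$, and in the totally ramified step it \emph{replaces} $\pi_M$ by $\pi_L$ in the system of generators (same number of coordinates, with $P$ still the single uniformizer index). The two log tubular neighborhoods are then related by the substitution $(X_1,\ldots,X_n)\mapsto(X_1,\ldots,X_{n-1},X_n^p)$, which matches the exponent shifts on the nose and is a homeomorphism of adic polydiscs in characteristic $p$. The residual step adds one coordinate, twists the base coordinates by full Frobenius, and identifies the result with the base pullback times a disc. Both moves rely only on $\pi_0$-invariance under homeomorphisms and under products with a disc, with no need to verify that a candidate map is surjective or has connected fibers.
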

\begin{proof}
We may assume $\ch(K)=p$. It suffices to show the equality in the lemma for any purely inseparable extension $L/M$ of degree $p$ of finite extensions of $K$. Let $k_M$ and $\pi_M$ be the residue field and a uniformizer of $M$. Let $z_1,\ldots,z_{n-1}$ be a lift in $\oem$ of a system of finite generators of the finite extension $k_M/k$ and put $z_n=\pi_M$. Then $Z=(z_1,\ldots,z_{n})$ and $P=\{n\}$ form a log system of generators of the $\okey$-algebra $\oem$. Let $(f_1,\ldots, f_r)$ be the kernel of the associated surjection $\okey[X]\to \oem$ and $g$ be a lift of the unit $\pi_M^{e_{M/K}}/\pi$ by this surjection. Then the log tubular neighborhood $X^j_{K,\log}(\oem,Z,P)$ is the affinoid subdomain 
\[
\{x\in \Spv(K\la X\ra) \mid
|f_i(x)|\leq |\pi|^j\text{ for any }i,\ |(X_n^{e_{M/K}}-\pi g)(x)|\leq |\pi|^{j+1}
\}
\]
of the $n$-dimensional unit polydisc $D^n_K$ over $K$. 

Suppose that $L/M$ is totally ramified. Then we have an isomorphism of $\oem$-algebras
\[
\oem[Y]/(Y^p-\pi_M)\simeq \oel.
\]
Let $z'_n$ be the image of $Y$ by this isomorphism, which is a uniformizer of $L$. Put $z'_i=z_i$ for $i\leq n-1$. Then $Z'=\{z'_1,\ldots,z'_{n-1},z'_n\}$ and $P'=\{n\}$ form a log system of generators of the $\okey$-algebra $\oel$. The kernel of the associated surjection $\okey[X]\to \oel$ is the ideal
\[
(f_1(X_1,\ldots, X_{n-1},X_n^p),\ldots, f_r(X_1,\ldots, X_{n-1},X_n^p)).
\]
The surjections associated to $Z$ and $Z'$ fit into the commutative diagram
\[
\xymatrix{
\okey[X]\ar[r]\ar[d] & \oem \ar[d]\\
\okey[X]\ar[r]& \oel,
}
\]
where the right vertical arrow is the natural inclusion and the left vertical arrow is defined by $X_i\mapsto X_i$ for $i\leq n-1$ and $X_n\mapsto X_n^p$. Since the natural inclusion sends $z_n^{e_{M/K}}/\pi$ to $(z'_n)^{e_{L/K}}/\pi$, the latter element is lifted to $g(X_1,\ldots,X_{n-1},X_n^p)$ by the surjection $\okey[X]\to \oel$. Thus the log tubular neighborhood $X^j_{K,\log}(\oel,Z',P')$ is the affinoid subdomain 
\[
\left\{\begin{array}{l|l}
x\in \Spv(K\la X\ra) &\begin{array}{l}
|f_i(X_1,\ldots,X_{n-1},X_n^p)(x)|\leq |\pi|^j\text{ for any }i,\\
|(X_n^{p e_{M/K}}-\pi g(X_1,\ldots, X_{n-1},X_n^p))(x)|\leq |\pi|^{j+1}
\end{array}
\end{array}\right\}
\]
of the $n$-dimensional unit polydisc $D^n_K$. Let $\bC$ be the completion of $\Kbar$. To compare the sets of geometric connected components, we may pass to an adic space over $\Spa(\bC,\oc)$ and consider the sets of connected components by Lemma \ref{connCp}. Let $X^{j,\ad}_{\bC,\log}(\oem,Z,P)$ be the adic space associated to the base change of $X^j_{K,\log}(\oem,Z,P)$ to $\Spv(\bC)$. We also define $X^{j,\ad}_{\bC,\log}(\oel,Z',P')$ and $D^{n,\ad}_\bC$ similarly. From the definition, we see that the rational subset $X^{j,\ad}_{\bC,\log}(\oel,Z',P')$ is the inverse image of the rational subset $X^{j,\ad}_{\bC,\log}(\oem,Z,P)$ of $D^{n,\ad}_\bC$ by the map
\[
D^{n,\ad}_\bC \to D^{n,\ad}_\bC,\ (X_1,\ldots,X_{n-1},X_n)\mapsto (X_1,\ldots,X_{n-1},X_n^p).
\]
Since this map is a homeomorphism, the claim follows in this case.

Next suppose that $L/M$ is of relative ramification index one. We have an isomorphism of $\oem$-algebras
\[
\oem[X_{n+1}]/(X^p_{n+1}-a)\to \oel,\ X_{n+1}\mapsto z'_{n+1}=a^{1/p}
\]
with some $a\in \oem^\times$. Put $z'_i=z_i$ for $i\leq n$. Then $Z'=(z'_1,\ldots,z'_{n+1})$ and $P=\{n\}$ form a log system of generators of the $\okey$-algebra $\oel$. Let $f'$ be a lift of the element $a$ by the surjection $\okey[X]\to \oem$ associated to $Z$. Define $X^{j,\ad}_{\bC,\log}(\oem,Z,P)$ and $X^{j,\ad}_{\bC,\log}(\oel,Z',P')$ similarly to the above case. Then $X^{j,\ad}_{\bC,\log}(\oel,Z',P')$ is the rational subset
\[
\left\{\begin{array}{l|l}
z\in D^{n+1,\ad}_\bC &\begin{array}{l}
|f_i(X_1,\ldots,X_n)(z)|\leq |\pi(z)|^j\text{ for any }i,\\
|(X^p_{n+1}-f'(X_1,\ldots,X_n))(z)|\leq |\pi(z)|^j,\\
|(X_n^{e_{M/K}}-\pi g(X_1,\ldots,X_n))(z)|\leq |\pi(z)|^{j+1}
\end{array}
\end{array}\right\}
\]
of the $(n+1)$-dimensional unit polydisc $D^{n+1,\ad}_\bC$. Now we consider the map
\[
\varphi: D^{n,\ad}_\bC \to D^{n,\ad}_\bC, \ (X_1,\ldots,X_{n})\mapsto (X_1^p,\ldots,X_{n}^p),
\]
which is also a homeomorphism. The inverse image of $X^{j,\ad}_{\bC,\log}(\oel,Z',P')\subseteq D^{n+1,\ad}_\bC$ by the map $\varphi\times\id$ is the rational subset
\[
\left\{\begin{array}{l|l}
z\in D^{n+1,\ad}_\bC &\begin{array}{l}
|f_i(X_1^p,\ldots,X^p_n)(z)|\leq |\pi(z)|^j\text{ for any }i,\\
|(X_{n+1}-f''(X_1,\ldots,X_n))(z)|\leq |\pi(z)|^{j/p},\\
|(X_n^{p e_{M/K}}-\pi g(X_1^p,\ldots,X_n^p))(z)|\leq |\pi(z)|^{j+1}
\end{array}
\end{array}\right\},
\]
where $f''$ is the element of the ring $\bC[X]$ satisfying $f''(X_1,\ldots,X_n)^p=f'(X^p_1,\ldots,X^p_n)$. Since this adic space is isomorphic to 
\[
\varphi^{-1}(X^{j,\ad}_{\bC,\log}(\oem,Z,P))\times_{\Spa(\bC,\oc)} D^{1,\ad}_\bC,
\]
the claim follows also in this case.
\end{proof}

\begin{dfn}
We say that the log ramification of a finite extension $L/K$ is bounded by $j$ if 
\[
\sharp \cF^j_{K,\log}(\oel)=[L:K].
\]
\end{dfn}

If $L/K$ is separable, it is equivalent to the definition given in \cite[Definition 9.4]{AS1}. Using Lemma \ref{sepinsep}, we can prove the following variant of Lemma \ref{CIlift} (\ref{CIlift-sep}) for log ramification.

\begin{lem}\label{CIliftlog}
Let $L/K$ be a finite extension and $j$ be a positive rational number. If the log ramification of $L/K$ is bounded by $j$, then the extension $L/K$ is separable.
\end{lem}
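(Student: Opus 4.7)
The plan is to mimic the non-log argument in Lemma \ref{CIlift} (\ref{CIlift-sep}), but since the surjection from $\cF_K(\oel)$ onto the log ramification functor in \cite[Proposition 9.3 (i)]{AS1} is only stated for separable extensions, I would first reduce to the separable part of $L/K$ using Lemma \ref{sepinsep}.

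First I would let $M$ be the separable closure of $K$ in $L$, so that $L/M$ is purely inseparable and $[L:K]=[L:M][M:K]$. By Lemma \ref{sepinsep}, the hypothesis gives
\[
\sharp \cF^j_{K,\log}(\oem)=\sharp \cF^j_{K,\log}(\oel)=[L:K].
\]
Since $M/K$ is finite separable, \cite[Proposition 9.3 (i)]{AS1} yields a surjection of finite $G_K$-sets
\[
\cF_K(\oem)\twoheadrightarrow \cF^j_{K,\log}(\oem),
\]
which combined with $\sharp \cF_K(\oem)=[M:K]$ gives the chain of inequalities
\[
[L:K]=\sharp \cF^j_{K,\log}(\oem)\leq \sharp \cF_K(\oem)=[M:K]\leq [L:K].
\]
Hence $[L:M]=1$, i.e.\ $L=M$, so $L/K$ is separable.

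The only nontrivial input is Lemma \ref{sepinsep}, which has already been established, so this should go through cleanly; there is no real obstacle beyond assembling the cited facts in the right order. One sanity check I would perform is that $\sharp\cF_K(\oem)=[M:K]$ holds even without assuming the $K$-algebra $M$ is étale (it is, because $M/K$ is separable), and that the surjection from \cite[Proposition 9.3 (i)]{AS1} is applicable in the present normalization of log tubular neighborhoods (it is, since the construction of $X^j_{K,\log}$ for separable extensions recalled just before Lemma \ref{sepinsep} agrees with that of \cite{AS1}).
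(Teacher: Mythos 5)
Your proof is correct and follows essentially the same route as the paper: reduce to the separable closure $M$ via Lemma \ref{sepinsep}, invoke the surjection $\cF_K(\oem)\twoheadrightarrow\cF^j_{K,\log}(\oem)$ from \cite[Proposition 9.3 (i)]{AS1} (applicable because $M/K$ is separable), and compare cardinalities to force $L=M$. The paper states the same chain of (in)equalities more tersely, but the underlying argument is identical.
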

\begin{proof}
Let $M$ be the separable closure of $K$ in $L$. By Lemma \ref{sepinsep}, we have the inequality
\[
[L:K]\geq [M:K]=\sharp \cF_K(\oem)\geq \sharp \cF^j_{K,\log}(\oem)=\sharp \cF^j_{K,\log}(\oel)=[L:K],
\]
which implies $L=M$.
\end{proof}

Now let $L/K$ be a finite extension as before. Let $z_1,\ldots,z_{n-1}$ be elements of $\oel$ such that their images in $k_L$ generate the residue extension $k_L/k$. Put $z_n=\pi_L$. Then $Z=(z_1,\ldots,z_n)$ and $P=\{n\}$ form a log system of generators of the $\okey$-algebra $\oel$. Consider the associated surjection $\okey[X]\to \oel$ and write its kernel as $(f_1,\ldots, f_r)$. Let $g\in \okey[X]$ be a lift of the element $\pi_L^{e_{L/K}}/\pi$ by this surjection. Let $j$ be a positive rational number. Then the $j$-th log tubular neighborhood $X^{j}_{K,\log}(\oel,Z,P)$ is the affinoid variety
\[
\{x\in \Spv(K\la X\ra)\mid |f_i(x)|\leq |\pi|^j\text{ for any }i,\ |(X_n^{e_{L/K}}-\pi g)(x)|\leq |\pi|^{j+1}\},
\]
and we also have an isomorphism of finite $G_K$-sets
\[
\cF^j_{K,\log}(\oel)\simeq \pi_0^\geom(X^{j}_{K,\log}(\oel,Z,P)).
\]

Let $e=e(K)$ be as in Section \ref{intro}. Let $m$ be a positive integer satisfying $m\leq e-1$. Put $A=\okey/(\pi^m)$ and $A_+=\okey/(\pi^{m+1})$. Then the $A$-algebra $B=\oel/(\pi^m)$ is a truncated discrete valuation ring which is finite flat over $A$ and similarly for the $A_+$-algebra $B_+=\oel/(\pi^{m+1})$. The images of $\pi$ in $A$, $A_+$ and $\pi_L$ in $B_+$ are denoted by $\bar{\pi}$, $\bar{\pi}_+$ and $\bar{\pi}_{B_+}$, respectively. The natural map $\okey\to A_+$ gives a lift $(K,\iota_+)$ of $A_+$ and a lift $(K,\iota)$ of $A$.

On the other hand, by fixing a section $k\to A_+$ of the natural reduction map $A_+\to k$, we consider $A_+$ and $A$ as $k$-algebras. Put $F=k((u))$. The map $\oef=k[[u]]\to A_+$ sending $u$ to $\bar{\pi}_+$ gives a lift $(F,\boldsymbol{\iota}_+)$ of $A_+$ and a lift $(F,\boldsymbol{\iota})$ of $A$. 

Suppose that there exist a finite extension $E/F$ and an isomorphism of $A_+$-algebras $\oee\otimes_{\oef, \boldsymbol{\iota}_+}A_+\to B_+$. Note that we have $[L:K]=[E:F]$. Let $\pi_E$ be a uniformizer of $E$ lifting $\bar{\pi}_{B_+}$. Since the residue extensions of $L/K$ and $E/F$ are the same, we have the equality $e_{L/K}=e_{E/F}$.

\begin{lem}\label{loglift}
The images of the elements $\pi_L^{e_{L/K}}/\pi\in \oel$ and $\pi_E^{e_{E/F}}/u\in\oee$ in $B$ coincide with each other.
\end{lem}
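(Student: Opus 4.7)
The plan is to compute both images in the intermediate ring $B_+$ first, where the assertion becomes visible up to a correction of order $\pi^m$, and then conclude by a $\pi$-cancellation argument. Since $v_L(\pi_L^{e_{L/K}})=v_L(\pi)=e_{L/K}$, the element $u_1=\pi_L^{e_{L/K}}/\pi$ is a unit of $\oel$, and similarly $u_2=\pi_E^{e_{E/F}}/u$ is a unit of $\oee$. Let $\bar u_1\in B_+$ be the image of $u_1$ under the reduction $\oel\to B_+$, and let $\bar u_2\in B_+$ be the image of $u_2$ under the composite $\oee\to\oee\otimes_{\oef,\boldsymbol{\iota}_+}A_+\overset{\sim}{\to}B_+$ provided by the hypothesis.

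Next I would extract defining relations for $\bar u_1$ and $\bar u_2$ inside $B_+$. From the identity $u_1\pi=\pi_L^{e_{L/K}}$ in $\oel$, reducing modulo $\pi^{m+1}$ gives $\bar u_1\cdot\bar\pi_+=\bar\pi_{B_+}^{e_{L/K}}$. On the $F$-side, the analogous identity $u_2\cdot u=\pi_E^{e_{E/F}}$ lives in $\oee$; pushing it into $B_+$ and using that $u\mapsto\boldsymbol{\iota}_+(u)=\bar\pi_+$ together with the hypothesis that $\pi_E$ lifts $\bar\pi_{B_+}$, this becomes $\bar u_2\cdot\bar\pi_+=\bar\pi_{B_+}^{e_{E/F}}$. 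The equality $e_{L/K}=e_{E/F}$ then yields $(\bar u_1-\bar u_2)\bar\pi_+=0$ in $B_+$.

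Finally I would lift this relation to $\oel$. Picking arbitrary lifts of $\bar u_1$ and $\bar u_2$ in $\oel$, their difference $d$ satisfies $d\pi\in\pi^{m+1}\oel$, and since $\oel$ is $\pi$-torsion free we conclude $d\in\pi^m\oel$. This is precisely the statement that the images of $u_1$ and $u_2$ in $B=\oel/(\pi^m)$ coincide. The only step requiring care is tracking the identifications $u\leftrightarrow\bar\pi_+\leftrightarrow\pi$ and $\pi_E\leftrightarrow\bar\pi_{B_+}\leftrightarrow\pi_L$ through the two lifts $(K,\iota_+)$ and $(F,\boldsymbol{\iota}_+)$ of $A_+$ and through the given isomorphism $\oee\otimes_{\oef,\boldsymbol{\iota}_+}A_+\simeq B_+$; once these are fixed the computation is mechanical, and no estimate beyond the $\pi$-torsion freeness of $\oel$ is needed.
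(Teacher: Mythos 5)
Your proof is correct and follows the same route as the paper: reduce modulo $\pi^{m+1}$, use the identifications $\pi\leftrightarrow u\leftrightarrow\bar\pi_+$ and $\pi_L\leftrightarrow\pi_E\leftrightarrow\bar\pi_{B_+}$ together with $e_{L/K}=e_{E/F}$ to obtain the chain of equalities $\bar u_1\bar\pi_+=\bar\pi_{B_+}^{e_{L/K}}=\bar\pi_{B_+}^{e_{E/F}}=\bar u_2\bar\pi_+$ in $B_+$, and then cancel $\bar\pi_+$ using $\pi$-torsion freeness of $\oel$. The paper states the same equation chain and says only ``this implies the lemma,'' so you have simply made explicit the cancellation step the paper leaves implicit.
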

\begin{proof}
From the definition, we see that the images of $\pi$ and $u$ in $A_+$ and the images of $\pi_L$ and $\pi_E$ in $B_+$ both coincide. Thus we have the equation in the ring $B_+$
\[
u(\pi_L^{e_{L/K}}/\pi)=\pi(\pi_L^{e_{L/K}}/\pi)=\pi_L^{e_{L/K}}=\pi_{E}^{e_{E/F}}=u(\pi_E^{e_{E/F}}/u).
\]
This implies the lemma.
\end{proof}

Let $\bar{z}_i$ be the image of $z_i$ in $B$. Then $\bar{Z}=(\bar{z}_1,\ldots,\bar{z}_n)$ gives a system of generators of the $A$-algebra $B$. Let $\bar{f}_i$ and $\bar{g}$ be the images of $f_i$ and $g$ in the ring $A[X]$. Let $\mathbf{f}_i$ and $\mathbf{g}$ be their lifts in $\oef[X]$ by the surjection $\boldsymbol{\iota}:\oef\to A$. Let $\mathbf{z}_i$ be a lift of $\bar{z}_i$ in $\oee$ satisfying $\mathbf{z}_n=\pi_E$. Then $\mathbf{Z}=(\mathbf{z}_1,\ldots,\mathbf{z}_n)$ and $\mathbf{P}=\{n\}$ form a log system of generators of the $\oef$-algebra $\oee$. Consider the associated surjection $\oef[X]\to \oee$, and let $\mathbf{g}'$ be a lift of the element $\pi_E^{e_{E/F}}/u$ by this surjection. Then the images $\mathbf{g}$ and $\mathbf{g}'$ in $B$ are both equal to the element of Lemma \ref{loglift}. Hence we have the congruence
\[
\mathbf{g}\equiv \mathbf{g}'\bmod (\mathbf{f}_1,\ldots,\mathbf{f}_r,u^m)
\]
in the ring $\oef[X]$. 

Let $j$ be a positive rational number satisfying $j\leq m-1$. The above congruence implies that the $j$-th log tubular neighborhood $X^{j}_{F,\log}(\oee,\mathbf{Z},\mathbf{P})$ of the $\oef$-algebra $\oee$ with respect to $(\mathbf{Z},\mathbf{P})$ is equal to the $F$-affinoid variety
\[
\{x\in \Spv(F\la X\ra)\mid |\mathbf{f}_i(x)|\leq |u|^j\text{ for any }i,\ |(X_n^{e_{E/F}}-u \mathbf{g})(x)|\leq |u|^{j+1}\}.
\]
Put $\bar{f}_{r+1}=X_{n}^{e_{L/K}}-\bar{\pi}\bar{g}$, $\bar{f}'=\{\bar{f}_1,\ldots,\bar{f}_r,\bar{f}_{r+1}\}$ and $j'=(j,\ldots,j,j+1)$. Then we have the equalities
\[
X^{j}_{K,\log}(\oel,Z,P)=X_K^{j'}(\bar{f}',n),\ X^{j}_{F,\log}(\oee,\mathbf{Z},\mathbf{P})=X_F^{j'}(\bar{f}',n).
\]
Hence Theorem \ref{comparisonimperf} implies the following theorem.
\begin{thm}\label{complog}
Let $L/K$ be a finite extension and $m$ be a positive integer satisfying $m\leq e(K)-1$. Put $A_+=\okey/(\pi^{m+1})$. Fix a section $k\to A_+$ of the reduction map and consider the $k$-algebra surjection $\boldsymbol{\iota}_+: k[[u]]\to A_+$ defined by $u\mapsto \pi$. Put $F=k((u))$. Let $E/F$ be a finite extension with an isomorphism of $A_+$-algebras \[
\oee\otimes_{\oef,\boldsymbol{\iota}_+}A_+\to \oel/(\pi^{m+1}).
\]
Let $j$ be a positive rational number satisfying $j\leq m-1$. Then there exists an isomorphism of finite $G_{K'_\infty}$-sets
\[
\cF_{K,\log}^{j}(\oel)\to \cF_{F,\log}^{j}(\oee)
\]
via the isomorphism $G_{K'_\infty}\simeq G_{F'}$ of the classical theory of fields of norms. 
\end{thm}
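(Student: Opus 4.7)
The plan is to deduce this theorem as a direct corollary of Theorem \ref{comparisonimperf}. All the preparatory work is already done in the paragraphs preceding the statement: matching log systems of generators $(Z,P)=((z_1,\ldots,z_{n-1},\pi_L),\{n\})$ for $\oel/\okey$ and $(\mathbf{Z},\mathbf{P})=((\mathbf{z}_1,\ldots,\mathbf{z}_{n-1},\pi_E),\{n\})$ for $\oee/\oef$ have been chosen, and Lemma \ref{loglift} together with the resulting congruence $\mathbf{g}\equiv\mathbf{g}'\pmod{(\mathbf{f}_1,\ldots,\mathbf{f}_r,u^m)}$ has been used to show that the $j$-th log tubular neighborhoods on both sides coincide with the ordinary tubular neighborhoods $X_K^{j'}(\bar{f}',n)$ and $X_F^{j'}(\bar{f}',n)$ attached to the common set $\bar{f}'=\{\bar{f}_1,\ldots,\bar{f}_r,X_n^{e_{L/K}}-\bar{\pi}\bar{g}\}\subseteq A[X]$ (with $A=\okey/(\pi^m)$) and the common exponent vector $j'=(j,\ldots,j,j+1)$.

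The proof then consists of three quick steps. First, observe that the hypothesis $j\leq m-1$ forces every component of $j'$ to be at most $m$, so Theorem \ref{comparisonimperf} applies to the truncated ring $A$ with the two lifts $(K,\iota)$ and $(F,\boldsymbol{\iota})$ induced from $(K,\iota_+)$ and $(F,\boldsymbol{\iota}_+)$ by reducing modulo $\pi^m$. This produces a $G_{K'_\infty}$-equivariant bijection
\[
\rho^{K,F}_{\bar{f}',n}:\pi_0^\geom(X_K^{j'}(\bar{f}',n))\longrightarrow\pi_0^\geom(X_F^{j'}(\bar{f}',n))
\]
via the fields of norms isomorphism $G_{K'_\infty}\simeq G_{F'}$. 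Second, transport this across the two identifications above to obtain an isomorphism $\pi_0^\geom(X^j_{K,\log}(\oel,Z,P))\to\pi_0^\geom(X^j_{F,\log}(\oee,\mathbf{Z},\mathbf{P}))$ of $G_{K'_\infty}$-sets. Third, invoke the canonical identifications $\cF^j_{K,\log}(\oel)\simeq\pi_0^\geom(X^j_{K,\log}(\oel,Z,P))$ and $\cF^j_{F,\log}(\oee)\simeq\pi_0^\geom(X^j_{F,\log}(\oee,\mathbf{Z},\mathbf{P}))$, which follow from the constancy of the inverse systems defining the log functors (recalled just before Lemma \ref{sepinsep}).

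There is no real obstacle: the entire content of the log case is compressed into Lemma \ref{loglift}, which ensures that the log parameter $X_n^{e_{L/K}}-\pi g$ descends to a single polynomial over $A[X]$ visible simultaneously on the $K$-side and the $F$-side, and into the numerical inequality $j+1\leq m$, which places the stronger log bound within the reach of Theorem \ref{comparisonimperf}. The rest is simply bookkeeping about which polynomial set and which exponent vector to feed into the non-log comparison theorem, so no new geometric or perfectoid input is needed at this stage.
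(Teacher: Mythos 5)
Your proposal is correct and tracks the paper's own argument exactly: the paper places all the work (choice of matching log systems, Lemma \ref{loglift}, the congruence $\mathbf{g}\equiv\mathbf{g}'$, and the identifications $X^{j}_{K,\log}(\oel,Z,P)=X_K^{j'}(\bar{f}',n)$ and $X^{j}_{F,\log}(\oee,\mathbf{Z},\mathbf{P})=X_F^{j'}(\bar{f}',n)$ with $j'=(j,\ldots,j,j+1)$) in the paragraph preceding the statement, and then dispatches Theorem \ref{complog} with a single appeal to Theorem \ref{comparisonimperf}. Your three steps — checking $j+1\leq m$ so that $j'$ lands in the allowed range for the non-log comparison over $A=\okey/(\pi^m)$, transporting across the tubular-neighborhood identifications, and invoking the constancy of the inverse system defining $\cF^j_{-,\log}$ — are exactly the intended bookkeeping, so there is no gap and no divergence from the paper's route.
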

\begin{flushright}
\qed
\end{flushright}

\begin{cor}\label{LElog}
Let $L_1/K_1$ and $L_2/K_2$ be extensions of complete discrete valuation fields of residue characteristic $p>0$. Let $\pi_{K_i}$ be a uniformizer of $K_i$. Let $m$ be a positive integer satisfying $m \leq \min_i e(K_i)$. Suppose that we have compatible isomorphisms of rings $\cO_{K_1}/(\pi_{K_1}^m)\simeq \cO_{K_2}/(\pi_{K_2}^m)$ and $\cO_{L_1}/(\pi_{K_1}^m)\simeq \cO_{L_2}/(\pi_{K_2}^m)$. Then, for any positive rational number $j\leq m-2$, the log ramification of $L_1/K_1$ is bounded by $j$ if and only if the log ramification of $L_2/K_2$ is bounded by $j$.
\end{cor}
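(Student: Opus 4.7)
The plan is to reduce to Theorem~\ref{complog} in exactly the same way that Corollary~\ref{LEnonlog} reduces to Corollary~\ref{FuncIndep}: I will produce a single finite extension $E/F$ in equal characteristic $p$ that matches both $L_1/K_1$ and $L_2/K_2$ to the precision required by the log comparison theorem, and then read off equality of cardinalities of log ramification functor values. The numerics match the statement: with $m\leq \min_i e(K_i)$, taking the ``$m$'' of Theorem~\ref{complog} to be our $m-1$ satisfies the hypothesis $(m-1)\leq e(K_i)-1$ and yields the bound $j\leq (m-1)-1=m-2$.

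Concretely, put $A_+=\cO_{K_1}/(\pi_{K_1}^m)\simeq\cO_{K_2}/(\pi_{K_2}^m)$ and $B_+=\cO_{L_1}/(\pi_{K_1}^m)\simeq\cO_{L_2}/(\pi_{K_2}^m)$ via the given compatible isomorphisms. Then $B_+$ is a truncated discrete valuation ring and a finite flat $A_+$-algebra, hence a finite extension of $A_+$ in the sense of \cite{HT}. Let $k$ be the common residue field. Using \cite[Th\'{e}or\`{e}me (19.6.1)]{EGA4-1}, I choose a section $k\to A_+$ of the reduction map, put $F=k((u))$, and let $\boldsymbol{\iota}_+\colon k[[u]]\to A_+$ send $u$ to the common image $\bar{\pi}$ of $\pi_{K_1}$ and $\pi_{K_2}$ in $A_+$. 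By the second part of Lemma~\ref{liftB} applied to $B_+/A_+$ along the lift $(F,\boldsymbol{\iota}_+)$, there exists a finite extension $E/F$ together with an isomorphism of $A_+$-algebras $\oee\otimes_{\oef,\boldsymbol{\iota}_+}A_+\simeq B_+$. In particular $[E:F]=\rank_{A_+}(B_+)=[L_i:K_i]$ for $i=1,2$.

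Next I apply Theorem~\ref{complog} to each pair $(L_i/K_i,\,E/F)$, taking the theorem's integer to be $m-1$. The required compatibility isomorphism $\oee\otimes_{\oef,\boldsymbol{\iota}_+}A_+\simeq \cO_{L_i}/(\pi_{K_i}^m)$ is exactly what the construction of $E$ provides, via the identification $B_+\simeq\cO_{L_i}/(\pi_{K_i}^m)$. For any $j\leq m-2$ the theorem then yields bijections
\[
\cF^{j}_{K_i,\log}(\cO_{L_i})\ \simeq\ \cF^{j}_{F,\log}(\oee),\qquad i=1,2.
\]
Since the three degrees $[L_1:K_1]$, $[E:F]$, $[L_2:K_2]$ coincide, the condition $\sharp\cF^{j}_{K_1,\log}(\cO_{L_1})=[L_1:K_1]$ is equivalent to $\sharp\cF^{j}_{F,\log}(\oee)=[E:F]$, and likewise for the pair $(L_2,K_2)$; this gives the claimed equivalence. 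The only step that requires a little care is checking that $B_+/A_+$ really is a finite extension of truncated discrete valuation rings so that Lemma~\ref{liftB} applies, but this follows directly from $\cO_{L_i}$ being a discrete valuation ring and $\pi_{K_i}$ a nonzero regular element. I do not anticipate any further obstacle beyond keeping careful track of the shift between $m$ and $m-1$.
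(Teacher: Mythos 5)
Your proof is correct and follows essentially the same route as the paper: identify $B_+/A_+$ via the given isomorphisms, choose a section $k\to A_+$, lift to an equal-characteristic extension $E/F$ by Lemma~\ref{liftB}, and apply Theorem~\ref{complog} twice with the index shift $m\mapsto m-1$. One small point you elide: the given isomorphism $\cO_{K_1}/(\pi_{K_1}^m)\simeq\cO_{K_2}/(\pi_{K_2}^m)$ need not send the class of $\pi_{K_1}$ to that of $\pi_{K_2}$, so (as the paper does) one should first replace $\pi_{K_2}$ by a uniformizer whose image in $A_+$ equals that of $\pi_{K_1}$ — this change does not affect the ideals $(\pi_{K_2}^m)$, so the hypotheses are preserved.
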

\begin{proof}
Put $A_+=\cO_{K_1}/(\pi_{K_1}^m)$ and $B_+=\cO_{L_1}/(\pi_{K_1}^m)$. We choose a uniformizer $\pi_{K_2}$ such that its image in $A_+$ coincides with the image of $\pi_{K_1}$. Fix a section $k\to A_+$ of the reduction map and consider the lift $(F,\boldsymbol{\iota}_+)$ of $A_+$ as above. Since we have $m>2$ by assumption, $B_+/A_+$ is a finite extension of truncated discrete valuation rings and Lemma \ref{liftB} enables us to find a finite extension $E/F$ with an isomorphism of $A_+$-algebras $\oee\otimes_{\oef, \boldsymbol{\iota}_+}A_+\to B_+$. Then Theorem \ref{complog} yields bijections
\[
\cF^j_{K_1,\log}(\cO_{L_1})\simeq \cF^j_{F,\log}(\cO_{E})\simeq \cF^j_{K_2,\log}(\cO_{L_2})
\]
for any positive rational number $j\leq m-2$. This concludes the proof.
\end{proof}

\begin{rmk}
Here we do not study any functoriality of the isomorphism of Theorem \ref{complog} similar to Proposition \ref{compfunc}, for the following reason: Let $L$ and $L'$ be finite separable extensions of $K$. Let $(Z,P)$ be a log system of generators of the $\okey$-algebra $\oel$ and $(Z',P')$ be that of $\cO_{L'}$. Consider an inclusion of $K$-algebras $\psi: L\to L'$ satisfying $\psi(Z)\subseteq Z'$ and $\psi(P)\subseteq P'$. Then $P'$ contains a uniformizer of $L$. However, this forces us to include in defining equations of the $j$-th log tubular neighborhood of the $\okey$-algebra $\cO_{L'}$ the following equation:
\[
|(X_i^{e_{L'/K}}-\pi^{e_{L'/L}}g)(x)|\leq |\pi|^{j+e_{L'/L}}.
\]
Since $e_{L'/L}$ can be arbitrarily large, we cannot connect affinoid varieties of different characteristics functorially using $\bmod\ \pi^m$ for a fixed $m$ as we did in the non-log case.
\end{rmk}

%------------------------------------------------------------------------------------------------

%------------------------------------------------------------------------------------------------

\section{Compatibility of Scholl's higher fields of norms with ramification}

In this section, we prove that Scholl's theory of higher fields of norms (\cite{Scholl}) is compatible with the ramification theory of Abbes-Saito. Let $d$ be a non-negative integer. Let $K_\bullet=(K_n)_{n\geq 0}$ be a strictly deeply ramified extension of $d$-big local fields of mixed characteristic $(0,p)$ (\cite[Subsection 1.3]{Scholl}). In particular, there exists a positive integer $n_0$ and an element $\xi\in \cO_{K_{n_0}}$ satisfying $|p|\leq |\xi|<1$ such that for any $n\geq n_0$, the relative ramification index $e_{K_{n+1}/K_n}$ is equal to $p$ and the $p$-th power Frobenius map induces a surjection $\cO_{K_{n+1}}/(\xi)\to \cO_{K_n}/(\xi)$.  Moreover, for any $n\geq n_0$, we can choose a uniformizer $\pi_{K_n}$ of $K_n$ satisfying $\pi_{K_{n+1}}^p\equiv \pi_{K_n}\bmod \xi$. Put $K_\infty=\cup_n K_n$ and
\[
X^{+}=X^+(K_\infty)=\varprojlim_{n\geq n_0,\Phi}\cO_{K_n}/(\xi),
\]
where all the transition maps are the $p$-th power Frobenius maps. Set $\Pi=(\pi_{K_n})_{n\geq n_0}$. Let $k_n$ be the residue field of $K_n$. Then $X^+$ is a complete discrete valuation ring of characteristic $p$ with uniformizer $\Pi$ and residue field
\[
k'=\varprojlim_{n\geq n_0,\Phi} k_n
\]
(\cite[Theorem 1.3.2]{Scholl}). Moreover, $X^+$ is independent of the choice of $n_0$ and $\xi$. Put $X=\Frac(X^+)$. 

Let $L_\infty$ be a finite extension of $K_\infty$. Then $L_\infty$ can be written as $L_\infty=K_\infty L_0$ with some finite extension $L_0/K_0$ inside $L_\infty$. Put $L_n=K_nL_0$. Then $L_\bullet=(L_n)_{n\geq 0}$ is also strictly deeply ramified for any $\xi'$ satisfying $|\xi|<|\xi'|<1$ with some $n'_0$, and we can define a complete discrete valuation ring $X^+(L_\infty)$ by a similar construction to $X^+$ for $L_\bullet$. Put $X(L_\infty)=\Frac(X^+(L_\infty))$. Then $L_\infty\mapsto X(L_\infty)$ defines an equivalence of categories from the category of finite extensions of $K_\infty$ to that of finite separable extensions of $X$ (\cite[Theorem 1.3.5]{Scholl}). In particular, for any finite Galois extension $L_\infty/K_\infty$, we have an isomorphism
\[
\Gal(L_\infty/K_\infty)\simeq \Gal(X(L_\infty)/X),
\]
which induces an isomorphism of absolute Galois groups $G_{K_\infty}\simeq G_X$.

\begin{dfn}
Let $L_\infty/K_\infty$ be a finite separable extension and $L_n$ be as above. We say that the ramification ({\it resp.} log ramification) of $L_\infty/K_\infty$ is bounded by $j$ if the ramification ({\it resp.} log ramification) of $L_n/K_n$ is bounded by $j$ for any sufficiently large $n$.
\end{dfn}

Then the main theorem of this section is the following, which reproves a result recently obtained by Shun Ohkubo using a totally different method.

\begin{thm}\label{Schollthm}
The ramification ({\it resp.} log ramification) of $L_\infty/K_\infty$ is bounded by $j$ if and only if the ramification ({\it resp.} log ramification) of $X(L_\infty)/X$ is bounded by $j$.
\end{thm}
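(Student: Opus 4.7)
The plan is to reduce Theorem~\ref{Schollthm} to the ramification comparison theorems already established, namely Corollary~\ref{LEnonlog} in the non-log case and Corollary~\ref{LElog} in the log case. The key technical input from Scholl's construction is the following: for every positive integer $m$ there exists an index $n_1 = n_1(m)$ such that for all $n \geq n_1$ the projection $\prjt_n\colon X^+(K_\infty) \to \cO_{K_n}/(\xi)$ descends to a ring isomorphism
\[
X^+(K_\infty)/(\Pi^m) \xrightarrow{\sim} \cO_{K_n}/(\pi_{K_n}^m)
\]
sending the class of $\Pi$ to the class of $\pi_{K_n}$. Applying the same construction to the tower $L_\bullet = (L_n)_{n}$ with $L_n = K_n L_0$, one obtains for $n$ large enough a compatible isomorphism
\[
X^+(L_\infty)/(\Pi^m) \xrightarrow{\sim} \cO_{L_n}/(\pi_{K_n}^m).
\]
Both isomorphisms ultimately come from the surjectivity of the Frobenius $\cO_{K_{s+1}}/(\xi)\to \cO_{K_s}/(\xi)$ built into the strict deep ramification hypothesis, together with the facts that $v_{K_n}(\xi)\to\infty$ (because $e_{K_{n+1}/K_n}=p$) and that the projection $k' \to k_n$ is a bijection for $n$ large (Frobenius is always injective in characteristic $p$, and is surjective by hypothesis). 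Compatibility with respect to the inclusions $X^+(K_\infty) \hookrightarrow X^+(L_\infty)$ and $\cO_{K_n} \hookrightarrow \cO_{L_n}$ follows from the functoriality of Scholl's construction.

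Granted these isomorphisms, the non-log direction of Theorem~\ref{Schollthm} is immediate. Fix a positive rational $j$ and choose a positive integer $m \geq j$. For all $n$ sufficiently large one has $m \leq e(K_n) = p^{n-n_0}e(K_{n_0})$ and the two displayed isomorphisms hold, so Corollary~\ref{LEnonlog}, applied with $(K_1,L_1) = (K_n,L_n)$, $(K_2,L_2) = (X, X(L_\infty))$, and uniformizers $\pi_{K_n}$ and $\Pi$, yields the equivalence: the ramification of $L_n/K_n$ is bounded by $j$ if and only if the ramification of $X(L_\infty)/X$ is bounded by $j$. Since the right-hand condition is independent of $n$, this equivalence, holding for all sufficiently large $n$, is exactly what the theorem asserts by the definition of bounded ramification for $L_\infty/K_\infty$. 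For the log case the same argument applies verbatim, with Corollary~\ref{LElog} in place of Corollary~\ref{LEnonlog} and $m \geq j+2$.

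The main obstacle is the clean verification of the two displayed isomorphisms and of their compatibility, which requires careful valuation bookkeeping: $\Pi$ is a uniformizer of $X$, but in $X^+(L_\infty)$ it has valuation $e_{X(L_\infty)/X} = e_{L_\infty/K_\infty}$, matching the $L_n$-valuation of $\pi_{K_n}$ for $n$ large, so that the ideal $(\Pi^m)\subseteq X^+(L_\infty)$ corresponds correctly to the ideal $(\pi_{K_n}^m)\subseteq \cO_{L_n}$. Once this is in place, the proof is essentially a mechanical application of the functoriality of Scholl's construction together with the comparison theorem of Section~\ref{comp}, with no new ideas beyond those already developed in the paper.
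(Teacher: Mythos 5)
Your proposal is correct and takes essentially the same route as the paper: both construct, from Scholl's strict deep ramification hypothesis, compatible presentations of $\cO_{K_n}/(\pi_{K_n}^m)$ and $\cO_{L_n}/(\pi_{K_n}^m)$ as quotients of $X^+$ and $X^+(L_\infty)$ (the paper taking $m=v_{K_n}(\xi)$ after replacing $\xi$ by the $\xi'$ of the $L$-tower, you fixing $m$ and choosing $n$ large), and then invoke Corollary~\ref{LEnonlog} and Corollary~\ref{LElog}. The ``valuation bookkeeping'' you flag is precisely the $\xi$ versus $\xi'$ normalization the paper makes explicit at the start of its proof.
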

\begin{proof}
We let $v_{K_n}$ denote the additive valuation of $K_n$ normalized as $v_{K_n}(\pi_{K_n})=1$ and $e(K_n)$ be the absolute ramification index of $K_n$. By replacing $\xi$ by $\xi'$, we may assume $n_0=n'_0$ and $\xi=\xi'$. Then we have $v_{K_n}(\xi)<e(K_n)$. Note that $v_{K_n}(\xi)$ can be arbitrarily large by increasing $n$. Take any positive integer $n\geq n_0$ satisfying $j\leq v_{K_n}(\xi)-2$. Put $m=v_{K_n}(\xi)$. Then the surjections 
\[
X^+\overset{\prjt_n}{\to} \cO_{K_n}/(\pi_{K_n}^m)\gets \cO_{K_n} 
\]
give two lifts of the truncated discrete valuation ring $\cO_{K_n}/(\pi_{K_n}^m)$ of length $m$, which is killed by $p$. Moreover, the diagram
\[
\xymatrix{
X^+(L_\infty) \ar[r]^{\prjt_n} & \cO_{L_n}/(\pi_{K_n}^m) & \cO_{L_n}\ar[l]\\
X^+ \ar[u]\ar[r]_-{\prjt_n} & \cO_{K_n}/(\pi_{K_n}^m) \ar[u] &  \cO_{K_n}\ar[l]\ar[u]
}
\]
is cartesian. Hence Corollary \ref{LEnonlog} and Corollary \ref{LElog} imply the theorem.
\end{proof}

For any positive rational number $j$, let $\mathrm{FE}^{\leqslant j}_{K_\infty}$ ({\it resp.} $\mathrm{FE}^{\leqslant j}_{K_\infty,\log}$) be the category of finite separable extensions $L_\infty/K_\infty$ whose ramification ({\it resp. log ramification}) is bounded by $j$. Put
\[
G_{K_\infty}^j=\bigcap_{L_\infty\in \mathrm{FE}^{\leqslant j}_{K_\infty}} G_{L_\infty},\ G_{K_\infty,\log}^j=\bigcap_{L_\infty\in \mathrm{FE}^{\leqslant j}_{K_\infty,\log}} G_{L_\infty}.
\]
Let $G_X^j$ and $G_{X,\log}^j$ be the $j$-th non-log and log upper ramification subgroups of $G_X$, respectively (\cite[Section 3]{AS1}). 
\begin{cor}
The isomorphism $G_{K_\infty}\simeq G_X$ induces isomorphisms
\[
G_{K_\infty}^j\simeq G_X^j,\ G_{K_\infty,\log}^j\simeq G_{X,\log}^j.
\]
\end{cor}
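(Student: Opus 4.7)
The plan is to deduce the corollary directly from Theorem \ref{Schollthm} together with Scholl's equivalence of categories $L_\infty\mapsto X(L_\infty)$. First, I recall that the isomorphism $G_{K_\infty}\simeq G_X$ arises precisely from this equivalence, so that for any finite separable extension $L_\infty/K_\infty$ the subgroup $G_{L_\infty}\subset G_{K_\infty}$ corresponds to $G_{X(L_\infty)}\subset G_X$. By Theorem \ref{Schollthm}, the functor $L_\infty\mapsto X(L_\infty)$ restricts to an equivalence between $\mathrm{FE}^{\leqslant j}_{K_\infty}$ and the category $\mathrm{FE}^{\leqslant j}_X$ of finite separable extensions $Y/X$ whose ramification is bounded by $j$, and similarly between $\mathrm{FE}^{\leqslant j}_{K_\infty,\log}$ and its log analogue $\mathrm{FE}^{\leqslant j}_{X,\log}$.

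Next, from Abbes-Saito's construction of the upper ramification filtration (\cite[Section 3]{AS1}) I would invoke the intersection descriptions
\[
G_X^j=\bigcap_{Y\in \mathrm{FE}^{\leqslant j}_X} G_Y, \qquad G_{X,\log}^j=\bigcap_{Y\in \mathrm{FE}^{\leqslant j}_{X,\log}} G_Y.
\]
These are immediate from the fact that the (log) ramification of a finite separable extension $Y/X$ is bounded by $j$ precisely when the functor $\cF^j$ (resp.\ $\cF^j_{\log}$) restricted to $\cO_Y$ splits completely over the fixed field of $G_X^j$ (resp.\ $G_{X,\log}^j$), which is in turn essentially the defining property of these subgroups in the Abbes-Saito formalism.

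Combining these two observations, under the isomorphism $G_{K_\infty}\simeq G_X$ we obtain
\[
G_{K_\infty}^j=\bigcap_{L_\infty\in\mathrm{FE}^{\leqslant j}_{K_\infty}}G_{L_\infty}\;\simeq\;\bigcap_{Y\in\mathrm{FE}^{\leqslant j}_X}G_Y=G_X^j,
\]
and the same argument using the log part of Theorem \ref{Schollthm} yields $G_{K_\infty,\log}^j\simeq G_{X,\log}^j$. I expect no serious obstacle here: the content of the corollary is entirely contained in Theorem \ref{Schollthm}, and the only point requiring care is the elementary intersection description of $G_X^j$ and $G_{X,\log}^j$, which is standard from the Abbes-Saito theory.
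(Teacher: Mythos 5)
Your argument is essentially identical to the paper's proof: both deduce the corollary from Theorem \ref{Schollthm} by noting that the functor $L_\infty \mapsto X(L_\infty)$ matches up the two families of extensions with bounded (log) ramification, hence matches up the intersections defining $G_{K_\infty}^j$ and $G_X^j$ (the latter being the standard intersection description of the Abbes--Saito upper ramification filtration). The only cosmetic difference is that you spell out, somewhat informally, why the intersection description of $G_X^j$ and $G_{X,\log}^j$ holds, whereas the paper simply cites it; this does not change the substance of the argument.
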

\begin{proof}
By Theorem \ref{Schollthm}, the equivalence of higher fields of norms induces an isomorphism
\[
\bigcap_{L_\infty\in \mathrm{FE}^{\leqslant j}_{K_\infty}} G_{L_\infty} \simeq \bigcap_{X'\in \mathrm{FE}^{\leqslant j}_{X}} G_{X'}
\]
and the latter group is equal to $G_X^j$. The assertion on log ramification follows similarly.
\end{proof}

%------------------------------------------------------------------------------------------------

%------------------------------------------------------------------------------------------------

\section{An application to a generalization of the Hasse-Arf theorem}\label{HasseArf}

Finally, we give an application of Theorem \ref{comparisonimperf} to a generalization of the Hasse-Arf theorem to the case of imperfect residue field, though it is under a very restrictive condition. 

Let $K$ be a complete discrete valuation field of residue characteristic $p>0$. Let $k$ be the residue field of $K$. Let $L/K$ be a finite separable extension. Then the Artin conductor $c(L/K)$ and the Swan conductor $c_{\log}(L/K)$ of the extension $L/K$ are defined as
\begin{align*}
c(L/K)&=\inf\{j\in \bQ_{>0}\mid \cF_K(\oel)\to \cF^j_{K}(\oel)\text{ is a bijection}\},\\
c_{\log}(L/K)&=\inf\{j\in \bQ_{>0}\mid \cF_K(\oel)\to \cF^j_{K,\log}(\oel)\text{ is a bijection}\},
\end{align*}
which are known to be non-negative rational numbers (\cite[Proposition 6.4 and Proposition 9.5]{AS1}). For their integrality, we have the following theorem of Xiao.

\begin{thm}\label{xiao}
Let $L/K$ be a finite abelian extension.
\begin{enumerate}
\item\label{xiaop}{(\cite{Xiao1}, Corollary 4.4.3)}
Suppose $\ch(K)=p$. Then $c(L/K)$ and $c_{\log}(L/K)$ are integers.
\item\label{xiao0}{(\cite{Xiao2}, Theorem)}
Suppose $\ch(K)=0$. 
\begin{enumerate}
\item $c(L/K)$ is an integer if $K$ is not absolutely unramified. 
\item $c_{\log}(L/K)$ is an integer if $p>2$, and $c_{\log}(L/K)\in \bZ[1/2]$ if $p=2$.
\end{enumerate}
\end{enumerate}
\end{thm}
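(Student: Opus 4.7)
The statement is attributed to Xiao and is really a citation from \cite{Xiao1} and \cite{Xiao2}, so my plan is to outline the strategy used there rather than invent a new argument. The common engine in both cases is a dictionary between the Abbes-Saito ramification filtration and slopes of $p$-adic differential modules on an annulus, refining work of Kedlaya. Once translated into differential-module language, integrality of the abelian conductor becomes a statement about Newton polygons of rank-one modules, which is elementary.

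For part (\ref{xiaop}), with $\ch(K)=p$, one first associates to each finite-order character $\chi$ of $G_K^{\mathrm{ab}}$ an Abbes-Saito break $c(\chi)$ (resp.\ $c_{\log}(\chi)$). Via Artin-Schreier-Witt theory one attaches to $\chi$ a rank-one $p$-adic differential module $\mathcal{M}_\chi$ on a suitable annulus over the higher field of norms of $K$, and the key technical theorem is that $c(\chi)$ equals the irregularity (non-log slope, resp.\ log slope) of $\mathcal{M}_\chi$. Since $\mathcal{M}_\chi$ has rank one, its Newton polygon has a single slope, which a direct computation with the underlying differential operator shows is an integer. One then observes that $c(L/K)$ is the maximum of $c(\chi)$ over characters $\chi$ trivial on $G_L$, and similarly for $c_{\log}(L/K)$, so integrality follows. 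The hard step is the comparison theorem between Abbes-Saito breaks and differential slopes in the imperfect-residue-field setting; this requires extending Kedlaya's one-dimensional theory to a version parametrized by a $p$-basis of $k$.

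For part (\ref{xiao0}), with $\ch(K)=0$, the strategy is to reduce to part (\ref{xiaop}) by an equal-characteristic deformation. The hypothesis that $K$ is not absolutely unramified, i.e.\ $e(K)\geq 2$, provides room to run such a reduction: one approximates $K$ modulo a high enough power of $\pi$ by an equal-characteristic local field $F$ with the same residue field and the same conductor data, in the spirit of the present paper's Theorem \ref{maincomparison}, and then transfers integrality from $F$ to $K$. The $\mathbb{Z}[1/2]$ anomaly in the log case at $p=2$ reflects the fact that the comparison between additive and multiplicative structures via $\log(1+x)$ produces a quadratic term $x^2/2$ that can shift the Swan conductor by a half-integer.

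The main obstacle, and the deepest input in Xiao's proof, is establishing the Abbes-Saito / differential-slope dictionary in part (\ref{xiaop}); the deformation step in part (\ref{xiao0}) is conceptually close to what is already done earlier in this paper, and the residual $p=2$ subtlety in the log case is the only place where one cannot remove the denominator $2$. Everything else, including the reduction to characters and the rank-one integrality, is comparatively formal.
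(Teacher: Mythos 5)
This statement is quoted from Xiao's papers \cite{Xiao1} and \cite{Xiao2} and is not proved in the present paper at all; it appears only as a cited input to the proof of Theorem~\ref{HA}. So there is no ``paper's own proof'' to compare against, and the appropriate form of a proof here is exactly what you did, namely a faithful summary of Xiao's arguments. Your account of part~(\ref{xiaop}) is essentially right: attach a rank-one $p$-adic differential module to each character via Artin--Schreier--Witt theory, prove an Abbes--Saito vs.\ differential-slope comparison, and conclude from the integrality of slopes of rank-one modules; that is indeed the shape of \cite{Xiao1}.

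However, your description of part~(\ref{xiao0}) misattributes the strategy. You say Xiao ``reduces to part~(\ref{xiaop}) by an equal-characteristic deformation'' in which $K$ is replaced by an equal-characteristic field agreeing with it modulo a high power of $\pi$, ``in the spirit of the present paper's Theorem~\ref{maincomparison}.'' That is not how \cite{Xiao2} proceeds: Xiao works directly in mixed characteristic, constructing differential modules over Robba-ring-type objects built from a Frobenius lift and a $p$-basis of $k$, and proving the Abbes--Saito/differential comparison there; there is no truncated-DVR or ``mod $\pi^m$'' transfer to equal characteristic. What you have described is in fact Hattori's own strategy for his Theorem~\ref{HA}, which uses Theorem~\ref{comparisonimperf} to deduce a restricted integrality statement from Xiao's part~(\ref{xiaop}); Hattori explicitly needs the full strength of Xiao's mixed-characteristic theorem as a \emph{stated input}, not as something recoverable by deformation (in particular Hattori's Theorem~\ref{HA} only covers conductors below $e(K)$, so it cannot reproduce Xiao's unconditional result). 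You have therefore conflated the cited theorem's proof with the application in Section~\ref{HasseArf}. The heuristic about the $\log(1+x)$ quadratic term causing the $\bZ[1/2]$ at $p=2$ is plausible-sounding but is not the mechanism Xiao identifies; the denominator there arises from the structure of the differential conductor in the log setting at $p=2$, and you should not present that guess as Xiao's explanation.
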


By using Theorem \ref{comparisonimperf}, we can prove the following theorem on the integrality of the conductors, which includes some new cases for log ramification and $p=2$.

\begin{thm}\label{HA}
Suppose $\ch(K)=0$. Let $L/K$ be a finite abelian extension and $e$ be the absolute ramification index of $K$.
\begin{enumerate}
\item\label{HAnonlog}
If $c(L/K)< e$, then $c(L/K)$ is an integer.
\item\label{HAlog}
If $c_{\log}(L/K)< e-2$, then $c_{\log}(L/K)$ is an integer.
\end{enumerate}
\end{thm}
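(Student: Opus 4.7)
The overall plan is to transport the abelian extension $L/K$ of characteristic zero to an abelian extension $E/F$ of equal characteristic $p$ with the same residue field, using the ramification correspondence of Theorem \ref{maincomparison} packaged as the categorical equivalence of Corollary \ref{equivcats}, in a way that preserves the Artin and Swan conductors, and then to invoke the known integrality in characteristic $p$ (Theorem \ref{xiao} (\ref{xiaop})).

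For part (\ref{HAnonlog}), I would set $A=\okey/(\pi^e)$, fix a section $k\to A$ of the reduction map, and put $F=k((u))$ with $\boldsymbol{\iota}\colon\oef\to A$ sending $u\mapsto\bar{\pi}$. Since $c(L/K)<e$, the extension $L/K$ belongs to $\mathrm{FE}^{\leqslant e}_K$, so Corollary \ref{equivcats} produces an object $E/F$ in $\mathrm{FE}^{\leqslant e}_F$ with compatible isomorphisms $\oel/(\pi^e)\simeq\oee/(u^e)$ over $\okey/(\pi^e)\simeq\oef/(u^e)$. Because an equivalence of categories preserves Hom-sets and their composition, $\Aut_F(E)\simeq\Aut_K(L)=\Gal(L/K)$ as groups; comparing cardinalities with $[E:F]=[L:K]$ forces $E/F$ to be Galois with abelian Galois group identified with $\Gal(L/K)$. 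Corollary \ref{LEnonlog} applied with $m=e$ then shows that ``ramification bounded by $j'$'' coincides on the two sides for every rational $j'\leq e$; since both conductors are then strictly less than $e$, taking infima yields $c(L/K)=c(E/F)$, which is an integer by Theorem \ref{xiao} (\ref{xiaop}).

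For part (\ref{HAlog}), I would first invoke the Abbes-Saito comparison $G_K^{j+1}\subseteq G_{K,\log}^j$ between the non-log and log upper ramification filtrations (cf.\ \cite{AS2}), which under our hypothesis gives $c(L/K)\leq c_{\log}(L/K)+1<e-1<e$. This places us in the situation of part (\ref{HAnonlog}) and produces an abelian extension $E/F$ with $\Gal(E/F)\simeq\Gal(L/K)$ together with compatible isomorphisms modulo $\pi^e$ and $u^e$. Applying Corollary \ref{LElog} with $m=e$ then shows that ``log ramification bounded by $j'$'' is preserved on both sides for every rational $j'\leq e-2$, and since $c_{\log}(L/K)<e-2$ we obtain $c_{\log}(L/K)=c_{\log}(E/F)$, whose integrality is again Theorem \ref{xiao} (\ref{xiaop}).

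The main subtlety will be the transport of the Galois (in fact abelian) structure through Corollary \ref{equivcats}, which should follow formally from the preservation of Hom-sets and their composition law by any equivalence of categories. A secondary point in part (\ref{HAlog}) is the Abbes-Saito inequality between the two filtrations, used only to reduce a Swan-conductor hypothesis to an Artin-conductor hypothesis compatible with the constraint $m\leq e$ in Corollary \ref{equivcats}.
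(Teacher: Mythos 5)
Your proposal is correct in outline and, on two points, genuinely differs from the paper's own argument, both times to good effect. For the abelian-ness of $E/F$ in part (i), you use the fact that the equivalence of Corollary \ref{equivcats} is a functor, so it induces a group isomorphism $\Aut_K(L)\simeq\Aut_F(E)$; comparing with $[E:F]=[L:K]$ then forces $E/F$ to be Galois and abelian. The paper instead reads off $\Gal(E/F)$ from the stabilizer of $\cF_F(\oee)$ as a $G_{F'}$-set under the isomorphism of Proposition \ref{compfunc}, and needs Lemma \ref{primaryext} (that $F'/F$ is primary) to descend from $F'$ to $F$. Your argument is cleaner and sidesteps Lemma \ref{primaryext}, at the cost of appealing to the full faithfulness in Proposition \ref{HTmain}, which is in any case built into Corollary \ref{equivcats}. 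For part (ii), the paper does a separate lifting at a different truncation level and re-establishes the abelian structure; you instead reuse the same $E/F$ produced in part (i), together with the compatible reductions modulo $\pi^e$ and $u^e$, and then invoke Corollary \ref{LElog} with $m=e$ (which covers $j'\leq e-2$) to match the log conductors. This is more economical.

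Two points should be tightened. First, the reduction in part (ii) rests on the inequality $c(L/K)\leq c_{\log}(L/K)+1$. This is true and can be shown directly rather than cited vaguely: choose a log system of generators $(Z,P)$ with $P=\{n\}$ and $z_n$ a uniformizer of $L$, so that the extra log condition is $|(X_n^{e_{L/K}}-\pi g)(x)|\leq|\pi|^{j+1}$; since $X_n^{e_{L/K}}-\pi g$ lies in the kernel ideal $(f_1,\ldots,f_r)$, the inequality $\max_i|f_i(x)|\leq|\pi|^{j+1}$ implies this condition automatically, so $X^{j+1}_K(\oel,Z)\subseteq X^j_{K,\log}(\oel,Z,P)$. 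The induced map $\cF^{j+1}_K(\oel)\to\cF^j_{K,\log}(\oel)$ factors the surjections from $\cF_K(\oel)$, and bijectivity of $\cF_K(\oel)\to\cF^j_{K,\log}(\oel)$ therefore forces bijectivity of $\cF_K(\oel)\to\cF^{j+1}_K(\oel)$, which is the claim. Second, you should dispose of the boundary case $e=1$ in part (i), where the hypothesis $c(L/K)<1$ already forces $L/K$ unramified and $c(L/K)=0$ by \cite[Proposition 6.9]{AS1}; the paper treats this case separately before passing to $m\geq 2$, and you should do the same or note explicitly that the $m=1$ instance of the equivalence still yields the conclusion.
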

\begin{proof}
Let $\pi$ be a uniformizer of $K$. Put $m=n=e$ for the non-log case, and $m=e-1$, $n=e-2$ for the log case. We also put $A=\okey/(\pi^m)$ and $B=\oel/(\pi^m)$. Then $A$ is a truncated discrete valuation ring of length $m$ killed by $p$ and $B$ is also a truncated discrete valuation ring which is finite flat over $A$. For the non-log case, if $m=1$ then $c(L/K)<1$. This is the same as saying that $L/K$ is unramified (\cite[Proposition 6.9]{AS1}) and $c(L/K)=0$. For the log case, we have $m=e-1>1+c_{\log}(L/K)$. Hence we may assume $m\geq 2$ for both cases. In particular, $B/A$ is a finite extension of truncated discrete valuation rings. Fix a section $k\to A$ of the reduction map $A\to k$. Then we have a lift $k[[u]] \to A$ sending $u$ to the image of $\pi$. Put $F=k((u))$. Let $K'$, $K'_\infty$ and $F'$ be as in Subsection \ref{subsecimperf}.

By Lemma \ref{liftB}, we can find a finite extension $E/F$ and a cartesian diagram
\[
\xymatrix{
\oef \ar[r] \ar[d] & \oee \ar[d]\\
A\ar[r] & B. 
}
\]
Note the equality $[L:K]=[E:F]$. By Corollary \ref{LEnonlog} or Corollary \ref{LElog}, we have the equality in each of two cases
\begin{align*}
\sharp\cF^n_{F}(\oee)&=\sharp \cF^n_{K}(\oel)=[L:K]=[E:F],\\
\sharp\cF^n_{F,\log}(\oee)&=\sharp \cF^n_{K,\log}(\oel)=[L:K]=[E:F].
\end{align*}
Thus Lemma \ref{CIlift} (\ref{CIlift-sep}) or Lemma \ref{CIliftlog} implies that the extension $E/F$ is separable. Moreover, its (non-log or log) ramification is bounded by $n$.

We claim that the extension $E/F$ is abelian. Indeed, by Proposition \ref{compfunc} or Theorem \ref{complog}, we have diagrams of finite $G_{F'}$-sets
\[
\begin{array}{ll}
\xymatrix{
\cF_K(\oel)\ar[d] & \cF_F(\oee)\ar[d]\\
\cF^n_K(\oel)\ar[r]_-{\sim} &\cF^n_F(\oee),
}&
\xymatrix{
\cF_K(\oel)\ar[d] & \cF_F(\oee)\ar[d]\\
\cF^n_{K,\log}(\oel)\ar[r]_{\sim} & \cF^n_{F,\log}(\oee)
}
\end{array}
\]
whose horizontal arrow is an isomorphism in each of two cases. Since the (non-log or log) ramification is bounded by $n$, the vertical arrows are bijections compatible with the Galois action.

Since $L/K$ is Galois, the stabilizer of the $G_K$-set $\cF_K(\oel)$
\[
\{g\in G_K\mid g(\psi)=\psi\text{ for any }\psi\in \cF_K(\oel)\}
\]
is equal to $G_L$. Let $\tilde{E}$ be the Galois closure of the finite separable extension $E/F$. Then the stabilizer of the $G_{F'}$-set $\cF_F(\oee)|_{G_{F'}}$ is $G_{\tilde{E}F'}$. By the above isomorphism, it is also isomorphic to the stabilizer of the $G_{K'_\infty}$-set $\cF_K(\oel)|_{G_{K'_\infty}}$, which is equal to $G_{LK'_\infty}$. The isomorphism $G_{F'}\simeq G_{K'_\infty}$ induces an isomorphism
\[
\Gal(\tilde{E}F'/F')\simeq \Gal(LK'_\infty/K'_\infty).
\]
In particular, we have the equality $[\tilde{E}F':F']=[LK'_\infty:K'_\infty]$. Since the extension $\tilde{E}/F$ is finite separable and $F'/F$ is primary by Lemma \ref{primaryext} (\ref{primaryext-prim}), we obtain the equality $F=F'\cap \tilde{E}$. Hence
\[
[E:F]\leq [\tilde{E}:F]=[\tilde{E}F':F']=[L K'_\infty:K'_\infty]\leq [L:K]=[E:F]
\]
and $\tilde{E}$ is equal to $E$. Thus the extension $E/F$ is Galois. Moreover, we also have
\[
\Gal(E/F)\simeq \Gal(E F'/F')\simeq \Gal(L K'_\infty/K'_\infty)\simeq \Gal(L/L\cap K'_\infty)\subseteq \Gal(L/K),
\]
which implies that $E/F$ is abelian. 

By Proposition \ref{compfunc} or Theorem \ref{complog}, we also have a bijection
\[
\cF^j_K(\oel)\simeq \cF^j_F(\oee),\ \cF^j_{K,\log}(\oel)\simeq \cF^j_{F,\log}(\oee)
\]
for any positive rational number $j$ satisfying $j\leq n$. Thus we obtain the equality
\[
c(L/K)=c(E/F),\ c_{\log}(L/K)= c_{\log}(E/F).
\]
Hence the theorem follows from Theorem \ref{xiao} (\ref{xiaop}).
\end{proof}

%---------------------------------------------------------------------

%---------------------------------------------------------------------

\end{document}